\newcommand{\diag}{\operatorname{diag}}
\newcommand{\im}{\operatorname{Im}}
\newcommand{\inter}{\operatorname{int}}
\newcommand{\ind}{\operatorname{ind}}
\newcommand{\Log}{\operatorname{Log}}
\newcommand{\spa}{\operatorname{span}}
\newtheorem{thm}{Theorem}[section]
 \newtheorem{cor}[thm]{Corollary}
 \newtheorem{lem}[thm]{Lemma}
 \newtheorem{prop}[thm]{Proposition}
 \theoremstyle{definition}
 \newtheorem{defn}[thm]{Definition}
 \theoremstyle{remark}
 \newtheorem{rem}[thm]{Remark}
 \numberwithin{equation}{section}
\begin{document}

\title{\renewcommand{\baselinestretch}{0.7}Factorization in a torus \\ and Riemann-Hilbert problems\\ \vspace{1.2cm}\large{M. C. C\^amara{\small{*}} \; and \; M.T.Malheiro{\small{**}}}\\ \vspace{12.5cm}
\begin{flushleft}
\begin{tabular}{c}
  \hline
\hspace{5cm}  \\
\end{tabular}\\
\vspace{-0.3cm}\hspace{0.5cm}{\small{*} Departamento \,de\,
Matem\'atica,\, Instituto \,Superior \,T\'ecnico, Universidade \;
T\'ecnica \; de\; Lisboa. \;
 Email\;
address cristina.camara@math.ist.utl.pt}\\
\hspace{0.5cm}{\small{**} Centro de Matem\'{a}tica, Departamento de Matem\'{a}tica e Aplica\c{c}\~{o}es, \ 
Universidade do Minho, Campus de Azur\'{e}m, 4800-058 Guimar\~{a}es,\; Email \; address mtm@math.uminho.pt}
\end{flushleft}}

\date{}

\maketitle

\begin{abstract}
A  factorization, which is shown to be a generalization of Wiener-Hopf factorization, is studied for H\"{o}lder continuous functions defined on a contour $\Gamma$ that is the pullback of $\dot{\mathbb{R}}$ (or the unit circle) on a Riemann surface $\Sigma$ of genus 1. The existence of a holomorphic $\Sigma$-factorization for every invertible function in that class is established and formulas  are given for the factors. A new concept of meromorphic $\Sigma$-factorization is introduced and studied, and its relation with holomorphic $\Sigma$-factorization is discussed. This is applied to study and solve some vectorial Riemann Hilbert problems, including Wiener-Hopf matrix factorization, as well as to study some properties of Toeplitz operators with $2 \times 2$ matrix symbols.

\textbf{Keywords:}

Riemann-Hilbert problem; Factorization; Riemann surfaces;
Toeplitz operator.
\end{abstract}


\section{Introduction}
Let $C_{\mu}(\dot{\mathbb{R}})$ denote the Banach algebra of functions that are continuous and satisfy a H\"{o}lder condition with exponent $\mu \in ]0,1[$ on $\dot{\mathbb{R}}$ (\cite{MP}) and let $C_{\mu}^{\pm}(\dot{\mathbb{R}}):=C_{\mu}(\dot{\mathbb{R}})\cap H_{\infty}^{\pm}$ where $H_{\infty}^{\pm}:=H^{\infty}(\mathbb{C}^{\pm})$ are the Hardy spaces of functions which are analytic and bounded in the half-planes $\mathbb{C}^{\pm}$, respectively.

Denoting by $\mathcal{GA}$ the group of invertible elements in an algebra $\mathcal{A}$, it is well known that any $f\in \mathcal{G}C_{\mu}(\dot{\mathbb{R}})$ can be represented as a product
\begin{equation}\label{1.1}
f=f_-\mathfrak{r}^kf_+
\end{equation}
where $f_-^{\pm 1} \in C^-_{\mu}(\dot{\mathbb{R}}), \quad f_+^{\pm 1} \in C^+_{\mu}(\dot{\mathbb{R}}),$ $\mathfrak{r}(\xi)=\frac{\xi-i}{\xi+i}, \quad \xi \in \mathbb{R},$
and $k \in \mathbb{Z}$ is the index of the complex function $f$ relative to the origin, $k=\nolinebreak[4]\ind f$. The representation (\ref{1.1}) is called a \emph{Wiener-Hopf} (or \emph{WH}) \emph{factorization} of $f$;  if $\ind f=0$, which is equivalent to having $\log f \in C_{\mu}(\dot{\mathbb{R}})$ (\cite{CG}), we can write
\begin{equation}\label{1.3A}
f=f_-f_+
\end{equation}
and the WH factorization is said to be \emph{canonical}. It allows to solve Riemann-Hilbert (RH) problems of the form
\begin{equation}\label{1.4}
f\varphi_+=\varphi_-+\psi,
\end{equation}
where $\psi$ is a given function and the unknowns $\varphi_{\pm}$ belong to certain spaces of functions analytic in $\mathbb{C}^{\pm}$, respectively. The representation (\ref{1.1}) is also important in the study of several properties of Toeplitz operators
\begin{equation*}\label{1.5}
T_f:H_p^+\rightarrow H_p^+, \quad T_f\varphi_+=P^+(f\varphi_+)
\end{equation*}
where, for $p \in ]1,+\infty[$, we denote by $H_p^+$ the Hardy space $H^p(\mathbb{C}^+)$ (\cite{Duren}) and by $P^+$ we denote the projection of $L_p(\mathbb{R})$ onto $H_p^+$ parallel to $H_p^-:=H^p(\mathbb{C}^-)$, identifying $H_p^{\pm}$ with subspaces of $L_p(\mathbb{R})$ (\cite{MP,CG,BS,GK,LS}).

Many problems in engineering, physics and mathematics also lead to RH problems with matrix coefficients, for which a factorization analogous to (\ref{1.1}) can be defined and used in a similar way. However, contrary to the scalar case, methods to obtain its factors are known only for particular cases, even in what can be considered the simplest non-scalar case, that of $2\times 2$ matrix functions (\cite{MP,CG,LS}).

In this case, though, it was shown in \cite{CSS} that for every $2 \times 2$ matrix function $G$ with entries in $C_{\mu}(\dot{\mathbb{R}})$ and possessing an inverse of the same type, there are symmetric matrix functions $Q_1 \in \mathcal{G}(C^-_{\mu}(\dot{\mathbb{R}})+\mathcal{R})^{2 \times 2}, \ \ Q_2 \in \mathcal{G}(C^+_{\mu}(\dot{\mathbb{R}})+\mathcal{R})^{2 \times 2}$
 (where by $\mathcal{R}$ we denote the space of rational functions with poles off $\dot{\mathbb{R}}$) such that
\begin{equation}\label{1.6}
G^TQ_1G=\det G. \ Q_2.
\end{equation}
It is shown moreover that $Q_1$ and $Q_2$ can be chosen such that $\det Q_1=\det Q_2=\mathfrak{p}$, where $\mathfrak{p}$ is a monic polynomial admitting, at most, simple zeros. Denoting by $C(Q_1,Q_2)$ the class of all matrix functions $G$ satisfying (\ref{1.6}) for a given pair $(Q_1,Q_2)$, it is then possible to associate with each class $C(Q_1,Q_2)$ a Riemann surface $\Sigma$ defined by an algebraic curve of the form $\tau^2=\mathfrak{p}(\xi)$.
This, in its turn, allows to reduce the factorization problem for a large class of $2\times 2$ matrix functions to a scalar RH problem in $\Sigma$, thus providing a general framework that goes significantly beyond the partial results that could previously be found in the literature (for general references on RH problems in Riemann surfaces, including their relations with the factorization of particular types of matrix functions see, for instance, \cite{Rodin,Springer,Zverovich1} and, more recently, \cite{Zverovich3} and references in it).

As a tool that can be considered naturally suggested by the use of (\ref{1.1}) to solve scalar RH problems relative to $\dot{\mathbb{R}}$ (or, equivalently, relative to the unit circle, as it is often the case), the concept of \emph{$\Sigma$-factorization} of a function $f$ defined on a contour $\Gamma$, which is the pullback of $\dot{\mathbb{R}}$ (or the unit circle) in $\Sigma$, is also introduced in \cite{CSS}. This factorization takes the form
\begin{equation}\label{1.8}
f=f_-rf_+
\end{equation}
where $f_{\pm}$ as well as their inverses belong to certain spaces of analytic functions in $\Sigma^{\pm}$, respectively, denoting by $\Sigma^{\pm}$ the pullback of $\mathbb{C}^{\pm}$ in $\Sigma$. If $r=1$ in (\ref{1.8}), we say that is a \emph{special $\Sigma$-factorization}.

It is shown in \cite{CSS} that a representation (\ref{1.8}) exists for all $f$ satisfying a H\"{o}lder condition with exponent $\mu \in ]0,1[$ on $\Gamma$ ($f \in C_{\mu}(\Gamma)$) and such that we have $\log f \in C_{\mu}(\Gamma)$. In contrast with the analogous situation in $C_{\mu}(\dot{\mathbb{R}})$, in this case $f$ does not possess a special $\Sigma$-factorization (which can be considered as the natural analogue of (\ref{1.3A}) in $\Sigma$), unless some additional and rather restrictive condition is satisfied. Assuming that $\log f \in C_{\mu}(\Gamma)$ (no conditions for existence of a $\Sigma$-factorization having been established otherwise), a method is proposed in \cite{CSS} to obtain (\ref{1.8}). Its application in the case of surfaces with genus greater than 1, however, presents great difficulties. Even in the case of genus  1, some questions naturally arise regarding  the formulas defining the factors $f_{\pm}$ and the form of the meromorphic middle factor  $r$ in (\ref{1.8}). Namely, the latter is given as a power of order $N \in \mathbb{N}$ of a rational function defined in terms of Riemann theta functions and depending also on $N$, where $N$ is large enough in a given sense (see Theorems 3.4 and 3.10 in \cite{CSS}). Since $f_{\pm}$ also depend on $N$ and we can replace $N$ by any $\tilde{N}>N$, it is clear that a factorization obtained by using the method proposed in \cite{CSS} is highly non-unique, unless it is a special $\Sigma$-factorization, and can present unnecessary difficulties.

Defining an appropriate form for the rational factor $r$ in (\ref{1.8}) is particularly important. On the one hand, since a $\Sigma$-factorization of $f$ is applied to solve RH problems of the form (\ref{1.4}) in $\Sigma$ in a way which is similar to that used when applying a WH factorization to solve RH problems in $\mathbb{C}$ (cf. \cite{CSS}), $r$ should be of a simple form and in particular $N$ should have the smallest possible value. On the other hand, this form should by itself provide some information on the RH problem with coefficient $f$ (such as the dimension of the space of solutions to the homogeneous RH problem), or on the Toeplitz operator $T_G$, if $G$ is a $2 \times 2$ matrix symbol whose factorization problem can be reduced to a scalar RH problem in $\Sigma$ as described in \cite{CSS}.

Our main purpose in this paper is to develop the study of $\Sigma$-factorization for functions defined on a contour $\Gamma$ in a Riemann surface $\Sigma$ of genus 1, and address the questions and difficulties that were mentioned above, considering its applications to the study of the solvability of RH problems in $\Sigma$ and of some properties of Toeplitz operators with $2 \times 2$ matrix symbols, such as invertibility or the characterization of its kernel and cokernel.

We assume here that $\Gamma$ is the pullback of $\dot{\mathbb{R}}$ in $\Sigma$ having in mind applications to problems which are originally formulated in $\dot{\mathbb{R}}$. All the results can however be translated to the case where, instead of $\dot{\mathbb{R}}$, the unit circle is the natural domain to be considered. We assume moreover that $\Sigma$ is given in a standard form which avoids computational difficulties (\cite{Erdelyi}) and allows to use convenient analogues of the Cauchy kernel (\cite{Zverovich1, Zverovich2}). The main results concerning $\Sigma$-factorization are stated in sections 4 and 5, while in section 6 we illustrate their application to the study of vectorial RH problems, WH factorization and some properties of Toeplitz operators with $2 \times 2$ symbols. Sections 2 and 3 can be considered of an auxiliary nature. The paper is organized as follows.

In section 2 we settle some notation and recall several preliminary results that will be needed later.

In section 3 we define and study the properties of functions of a certain form which are meromorphic in $\Sigma^+$ or in $\Sigma^-$ (recall that $\Sigma^{\pm}$ denotes the pullback of $\mathbb{C}^{\pm}$ on $\Sigma$), or rational, which play a crucial role in the results that follow. All these functions are represented in the form $f_1+\tau f_2$ where $f_1, f_2$ can be identified with functions in $C_{\mu}(\dot{\mathbb{R}})$. In particular, rational functions are represented in the form $r_1+\tau r_2$ with $r_1, r_2 \in \mathcal{R}$. This option turns out to be crucial in simplifying the results and in obtaining truly explicit factorizations in the last section.

In section 4 we show that every $f \in \mathcal{G}C_{\mu}(\Gamma)$ admits a (holomorphic) $\Sigma$-factorization (\ref{1.8}) and we present explicit formulas for its factors, their form being particularly simple in the case of existence of a special $\Sigma$-factorization.

By introducing a new concept of meromorphic $\Sigma$-factorization in section 5, we show that it is possible to simplify the rational middle factor and reduce the number of zeros and poles that we have to deal with, when applying a factorization of $f$ to solve RH problems in $\Sigma$ with coefficient $f$. This concept of meromorphic $\Sigma$-factorization actually sheds some new light on $\Sigma$-factorization; in particular it clarifies the relation between the existence of a special $\Sigma$-factorization and the existence of an M-special $\Sigma$-factorization (see (\ref{5.4A})).

In section 6 we apply the results of the preceding sections to characterize the kernels and establish invertibility conditions for Toeplitz operators with symbols in a class of $2 \times 2$ Daniele-Khrapkov  matrices and to obtain the explicit factorization of their symbols, both in the canonical and in the non-canonical cases. Two examples are given, one of which is motivated by the problem of existence of global solutions to a Lax equation for some integrable systems (\cite{CSS1, Reyman}).
\section{Preliminary Results}
\subsection{Notations}

We start by establishing some notation regarding Riemann surfaces (for a general reference see, for instance, \cite{Springer,Miranda}).

Let $\Sigma$ be the Riemann surface of genus 1 obtained by the compactification of the elliptic algebraic curve $\Sigma_{0}=\{(\xi, \tau)\in \mathbb{C}^2: \tau^2=\mathfrak{p}(\xi)\}$
where we assume the polynomial equation defining $\Sigma_{0}$ to be, up to a simple change of variables, in Legendre's normal form
\begin{equation}
\tau^2=(1+\xi^2)(k^2_{0}+\xi^2), \quad k_{0}>1
\label{2.2}
\end{equation}
(\cite{Erdelyi}), by adding two points "at infinity". In a neighbourhood of these, $\xi^{-1}$ is taken as the local parameter. We take the meromorphic function $(\xi, \tau)\mapsto \tau$ as the local parameter in a neighbourhood of the branch points; at all other points, $\xi$ is the local parameter.

It is convenient to view $\Sigma$ as a two-sheeted covering of $\mathbb{C}_{\infty}=\mathbb{C}\cup \{\infty\}$ with branch cuts $[-ik_{0}, -i]$ and $[i, ik_{0}]$, (using the notation $[a,b]$, $[a,b[$ and so on for line segments, including or excluding the endpoints, oriented from $a$ to $b$ when the orientation is relevant) via the meromorphic function
\begin{equation*}
\begin{array}{ccccccc}
  \Pi : & \Sigma & \rightarrow & \mathbb{C}_{\infty}, & (\xi, \tau) & \mapsto & \xi .
\end{array}
\label{2.3}
\end{equation*}
We say that $\xi$ is the \emph{projection} of $(\xi, \tau)$ in $\mathbb{C}_{\infty}$ or, equivalently, that $(\xi, \tau)$ is a \emph{preimage} of $\xi$ in $\Sigma$. Denoting by $\rho$ the branch of $\sqrt{\mathfrak{p}}$ (where $\mathfrak{p}(\xi)$ is defined by the right-hand side of (\ref{2.2})) for which Re $\rho \geq 0$, the points $(\xi, \rho(\xi))$ (resp. $(\xi, -\rho(\xi))$) are in the upper (resp. lower) sheet $\Sigma_{1}$ (resp. $\Sigma_{2}$ ) and we denote by  $\mathbf{\xi_{1}}$, $\mathbf{\xi_{2}}$ the preimages of $\xi$ in $\Sigma_{1}$ and $\Sigma_{2}$ respectively.

By $\Sigma^{\pm}$ we denote the inverse images under $\Pi$ of $\mathbb{C}^{\pm}$, respectively, and by $\Gamma$ the pullback of the compactified real line $\dot{\mathbb{R}}$. Note that $\Pi^{-1}(\dot{\mathbb{R}})$ consists of two disjoint closed paths (whose orientation is induced by that of the real line) $\Gamma_{1}$ and $\Gamma_{2}$ in $\Sigma_{1}$ and $\Sigma_{2}$ respectively, dividing $\Sigma$ into the two disjoint regions $\Sigma^{+}$ and $\Sigma^{-}$.

Denoting by * the hyperelliptic involution defined in $\Sigma$ by \linebreak $(\xi,\tau)\mapsto(\xi, -\tau)$, we will also use the following notations: $D_*=*(D)$ for $D\subset \Sigma$, $f_*$ for the composition of a complex valued function $f$, defined in a *-invariant subset $D(=D_*)$ of $\Sigma$, with $*: f_*=f\circ *$. Any function in a *-invariant subset of $\Sigma$ can be decomposed uniquely in the form $f=f_{\mathcal{E}}+\tau f_{\mathcal{O}}$
where $f_{\mathcal{E}}=\frac{1}{2}(f+f_*), \quad  f_{\mathcal{O}}=\frac{1}{2\tau}(f-f_*)$.

If $F$ is a complex valued function defined in $\Pi (D)$, where $D\subset \Sigma$, then we define $F_{\Pi}=F\circ\Pi :D\subset\Sigma\rightarrow \mathbb{C}$. $F_{\Pi}$ is meromorphic (resp. analytic) if $F$ is meromorphic (resp. analytic) in the corresponding domains. If $D=D_*$, then $(F_{\Pi})_*=F_{\Pi}$; conversely, if $f_*=f$ then there is a unique function $F$ in $\Pi(D)$ such that $f=F_{\Pi}$. Thus, we identify each *-invariant function $f$ in $D\subset\Sigma$ with $F$ (in $\Pi(D)\subset \mathbb{C}$) such that $f=F_{\Pi}$ and we use the same notation for both.

With this convention, if $f$ belongs to the space of H\"{o}lder continuous functions with exponent $\mu \in ]0,1[$ on $\Gamma$, denoted by $C_{\mu}(\Gamma)$, then $f_{\mathcal{E}}$, $f_{\mathcal{O}}$ and $\lambda_{+}^2f_{\mathcal{O}}$,  with
\begin{equation}
\lambda_{+}(\xi)=\xi+i,
\label{2.6}
\end{equation}belong to $ C_{\mu}(\dot{\mathbb{R}})$.

We denote by $C^{\pm}_{\mu}(\Gamma)$ (resp. $\mathcal{M}(\Sigma^{\pm})$) the subspace of $C_{\mu}(\Gamma)$ whose elements admit an analytic (resp. meromorphic) extension to $\Sigma^{\pm}$ and by $\mathcal{R}(\Sigma)$ the field of rational functions in $\Sigma$ without poles on $\Gamma$.

We will also need the Abel-Jacobi map
\begin{equation*}
A_{J} :  \Sigma  \longrightarrow  \mathbb{C}/L, \quad  A_{J}(P)=\frac{k_{0}}{i}\int_{\mathbf{0_{1}}}^P \frac{d\xi}{\tau} \ \mod L.
  \label{2.9}
\end{equation*}
where $L$ is the lattice $L=\mathbb{Z}.4K+\mathbb{Z}.2iK'$,
$K$ and $K'$ being the complete elliptic integrals (\cite{Erdelyi, Akhiezer})
\begin{equation*}
K=\int^{1}_{0}\frac{d\xi}{\sqrt{(1-\xi^2)(1-\frac{\xi^2}{k^2_{0}})}}, \quad K'=\int^{k_{0}}_{1}\frac{d\xi}{\sqrt{(\xi^2-1)(1-\frac{\xi^2}{k^2_{0}})}}.
\label{2.8}
\end{equation*}
Denoting by $\mathcal{P}$ the rectangle $\mathcal{P}=\{ s+it: s \in [-2K, 2K], t\in [-K',K']\}$
with four sides $s_{1} =[-2K+iK', 2K+iK'],\quad \gamma_{2}= [2K-iK',2K+iK']$,  $s'_{1} =[-2K-iK', 2K-iK'],\quad \gamma'_{2}= [-2K-iK',-2K+iK']$,
by the standard identified polygon representation (\cite{Miranda, Akhiezer}) the torus $\mathbb{C}/L$ is obtained from $\mathcal{P}$ by identifying the sides $s_{1}$ with $s'_{1}$ and $\gamma_{2}$ with $\gamma'_{2}$. In this representation all four vertices of $\mathcal{P}$ correspond to one point of $\mathbb{C}/L$ and the (oriented) sides $s_{1}$ and $\gamma_{2}$ correspond to closed paths: $\Pi_{L}(s_{1})=\Pi_{L}(s'_{1})$ and $\Pi_{L}(\gamma_{2})=\Pi_{L}(\gamma'_{2})$, respectively, where $\Pi_{L}:\mathbb{C}\longrightarrow \mathbb{C}/L$
is the canonical map.

Let $\sigma = A^{-1}_{J}\circ \Pi_{L}:\mathbb{C}\longrightarrow \Sigma$. We remark that, defining $\widetilde{\mathcal{P}}=\mathcal{P}\backslash (s_{1}'\cup \gamma'_{2})$, $\sigma_{|\widetilde{\mathcal{P}}}$ is a bijective map. We will use the following notation:
\begin{equation*}
\hspace{-0.5cm}\gamma_{1} = [iK',-iK'] \quad  (\gamma_{1}=-\gamma_{2}-2K, \text{ and } \sigma (\gamma_{j})=\Gamma_{j} \text{ for } j=1,2)
\label{2.15}
\end{equation*}
\begin{equation*}
\hspace{-2.8cm}\Omega^{+}= \mathcal{P}\cap \{z \in \mathbb{C}: \text{Re}(z) \in ]0, 2K[ \} \quad (\sigma (\Omega^{+})=\Sigma^{+})
\label{2.16}
\end{equation*}
\begin{equation*}
\hspace{-2.3cm}\Omega^{-}= \mathcal{P}\cap \{z \in \mathbb{C}: \text{Re}(z) \in ]-2K, 0[ \} \quad (\sigma (\Omega^{-})=\Sigma^{-}).
\label{2.17}
\end{equation*}
Let moreover $\mathcal{A}$ denote the closed path on $\Sigma$,
\begin{equation}
\mathcal{A}=\sigma ([-2K, 2K])
\label{2.18}
\end{equation}
whose projection on $\mathbb{C}_{\infty}$ is the line segment $[-i,i]$.
We have \linebreak $\int_{\mathcal{A}}(d\xi/\tau)=4iK/k_{0}, \quad \int_{\Gamma_{1}}(d\xi/\tau)=2K'/k_{0}.$

\subsection{Singular Integral Operators}

We introduce now some integrals of Cauchy type and present their fundamental properties. They are defined making use of analogues of the Cauchy kernel, of the same type as those constructed in (\cite{CSS,Zverovich1,Zverovich2}), taking here into account that the points corresponding to $\infty$ belong to $\Gamma$.
\begin{defn}
For $f \in C_{\mu}(\Gamma)$, let
\begin{equation*}
P^{\pm}_{\Gamma}f(\xi,\tau)=\pm \frac{1}{4\pi i}\left[(\xi+i)\int_{\Gamma}\frac{f(\xi_{0},\tau_{0})}{\xi_{0}+i}\frac{d\xi_{0}}{\xi_{0}-\xi}+\right.
\end{equation*}
\begin{equation*}
\left.\frac{\tau}{\xi+i}\int_{\Gamma}\frac{(\xi_{0}+i)f(\xi_{0},\tau_{0})}{\tau_{0}}\frac{d\xi_{0}}{\xi_{0}-\xi}\right]
\label{2.20}
\end{equation*}
where the integrals are understood in the sense of Cauchy's principal value.
\end{defn}
Denoting by $\widetilde{P}^{\pm}_{\mathbb{R}}$ the projections defined in $C_{\mu}(\dot{\mathbb{R}})$ by $\widetilde{P}^{\pm}_{\mathbb{R}} f=\lambda_{+}P^{\pm}_{\mathbb{R}}(\lambda_{+}^{-1}f)$
where $\lambda_{+}$ is defined in (\ref{2.6}) and $P^{\pm}_{\mathbb{R}}$ are the projections associated with the singular integral operator with Cauchy kernel $S_{\mathbb{R}}$ (\cite{MP}), i.e., $P^{\pm}_{\mathbb{R}}=(1/2)(I\pm S_{\mathbb{R}})$, it is easy to see that
\begin{equation}
P^{\pm}_{\Gamma} f=\widetilde{P}^{\pm}_{\mathbb{R}} f_{\mathcal{E}}+\tau\lambda_{+}^{-2}\widetilde{P}^{\pm}_{\mathbb{R}} (\lambda_{+}^2f_{\mathcal{O}}).
\label{2.22}
\end{equation}
From (\ref{2.22}) and the properties of $S_{\mathbb{R}}$ and $\widetilde{P}^{\pm}_{\mathbb{R}}$, it is clear that $P^{\pm}_{\Gamma}$ are bounded operators in $C_{\mu}(\Gamma)$ and the following holds (\cite{CSS}):
\begin{prop}
\begin{itemize}
               \item[(i)] $P^{\pm}_{\Gamma}$ are complementary projections in $C_{\mu}(\Gamma)$.
               \item[(ii)] $\im P^{+}_{\Gamma}=C_{\mu}^+(\Gamma)$.
               \item[(iii)] $\im P^{-}_{\Gamma}=C_{\mu}^-(\Gamma) \oplus \spa \{\tau\lambda_{+}^{-1}\}$.
               \item[(iv)] $P^{-}_{\Gamma}f \in C_{\mu}^-(\Gamma)$ if and only if
               \begin{equation}
                \int_{\Gamma}\frac{f(\xi_{0},\tau_{0})}{\tau_{0}}d\xi_{0}=0 \ ;
                \label{2.23}
                \end{equation}
                otherwise, $P^{-}_{\Gamma}f$ is meromorphic in $\Sigma^{-}$, with a simple pole at the branch point $-i$.
                \item[(v)] Every function $f \in C_{\mu}(\Gamma)$ admits a decomposition
                \begin{equation*}
               f=P^{+}_{\Gamma}f+P^{-}_{\Gamma}f=P^{+}_{\Gamma}f+f_{-}- \frac{K}{k_{0}\pi}\alpha_{f}\tau\lambda_{+}^{-1}
                \label{2.24}
                \end{equation*}
                where $f_{-}\in C_{\mu}^-(\Gamma)$ and
                \begin{equation}
               \alpha_{f}=\frac{k_{0}}{4Ki}\int_{\Gamma}\frac{f(\xi_{0}, \tau_{0})}{\tau_{0}}d\xi_{0}.
                \label{2.25}
                \end{equation}
             \end{itemize}
\end{prop}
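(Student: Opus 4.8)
The proposition is, in essence, a transcription of the classical theory of the Cauchy projections on $\dot{\mathbb{R}}$ through the identity (\ref{2.22}), so the plan is to push everything through that identity. I would first fix the scalar facts on $C_{\mu}(\dot{\mathbb{R}})$. Since $\widetilde{P}^{\pm}_{\mathbb{R}}=\lambda_{+}P^{\pm}_{\mathbb{R}}\lambda_{+}^{-1}$ is the conjugate of $P^{\pm}_{\mathbb{R}}=\tfrac{1}{2}(I\pm S_{\mathbb{R}})$ by the weight $\lambda_{+}$ (which acts as an invertible pointwise multiplier between the relevant spaces) and $P^{\pm}_{\mathbb{R}}$ are the classical bounded complementary Cauchy projections on $C_{\mu}(\dot{\mathbb{R}})$ (\cite{MP,CG}), the operators $\widetilde{P}^{\pm}_{\mathbb{R}}$ are again bounded complementary projections on $C_{\mu}(\dot{\mathbb{R}})$. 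Using the partial-fraction identity $\frac{1}{(\xi_{0}+i)(\xi_{0}-\xi)}=\frac{1}{\xi+i}\bigl(\frac{1}{\xi_{0}-\xi}-\frac{1}{\xi_{0}+i}\bigr)$ one checks that $S_{\mathbb{R}}$ leaves $\lambda_{+}^{-1}C_{\mu}(\dot{\mathbb{R}})$ invariant, and deduces that $\widetilde{P}^{+}_{\mathbb{R}}$ fixes $C^{+}_{\mu}(\dot{\mathbb{R}})$ — so $\im\widetilde{P}^{+}_{\mathbb{R}}=C^{+}_{\mu}(\dot{\mathbb{R}})$ — while $\widetilde{P}^{-}_{\mathbb{R}}g=P^{-}_{\mathbb{R}}g-(P^{-}_{\mathbb{R}}g)(-i)$; hence $\im\widetilde{P}^{-}_{\mathbb{R}}$ is the subspace of $C^{-}_{\mu}(\dot{\mathbb{R}})$ of functions vanishing at $-i$, a rank-one correction to $P^{-}_{\mathbb{R}}$ reflecting that $-i$ is a branch point of $\Sigma$.

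Items (i) and (ii) then follow formally. Feeding (\ref{2.22}) into $P^{+}_{\Gamma}f+P^{-}_{\Gamma}f$ and using $\widetilde{P}^{+}_{\mathbb{R}}+\widetilde{P}^{-}_{\mathbb{R}}=I$ recovers $f_{\mathcal{E}}+\tau f_{\mathcal{O}}=f$; and since $P^{\pm}_{\Gamma}f$ is again of the form $g_{\mathcal{E}}+\tau g_{\mathcal{O}}$ with $g_{\mathcal{E}}=\widetilde{P}^{\pm}_{\mathbb{R}}f_{\mathcal{E}}$ and $\lambda_{+}^{2}g_{\mathcal{O}}=\widetilde{P}^{\pm}_{\mathbb{R}}(\lambda_{+}^{2}f_{\mathcal{O}})$, the idempotency and mutual annihilation of $P^{\pm}_{\Gamma}$ follow from the same properties of $\widetilde{P}^{\pm}_{\mathbb{R}}$ together with uniqueness of the decomposition $g=g_{\mathcal{E}}+\tau g_{\mathcal{O}}$, boundedness being inherited. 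For (ii) the key is the dictionary lemma: $g=g_{\mathcal{E}}+\tau g_{\mathcal{O}}\in C_{\mu}(\Gamma)$ lies in $C^{+}_{\mu}(\Gamma)$ if and only if $g_{\mathcal{E}}$ and $\lambda_{+}^{2}g_{\mathcal{O}}$ lie in $C^{+}_{\mu}(\dot{\mathbb{R}})$. The nontrivial direction uses that $*$ preserves $\Sigma^{+}$ and fixes the branch points, so $g_{\mathcal{E}}=\tfrac{1}{2}(g+g_{*})$ and $\tau g_{\mathcal{O}}=\tfrac{1}{2}(g-g_{*})$ extend analytically to $\Sigma^{+}$, the second vanishing at $\mathbf{i}$, $\mathbf{ik_{0}}$ to odd order in the local parameter $\tau$, whence $g_{\mathcal{O}}$ and then $\lambda_{+}^{2}g_{\mathcal{O}}$ extend analytically there too; the converse is a direct computation using that $\tau$ and $\lambda_{+}$ are analytic and $\lambda_{+}$ nonvanishing on $\Sigma^{+}$. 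Combining this with $\im\widetilde{P}^{+}_{\mathbb{R}}=C^{+}_{\mu}(\dot{\mathbb{R}})$ (both components of $P^{+}_{\Gamma}f$ land in $C^{+}_{\mu}(\dot{\mathbb{R}})$, and conversely $P^{+}_{\Gamma}$ fixes $C^{+}_{\mu}(\Gamma)$) gives $\im P^{+}_{\Gamma}=C^{+}_{\mu}(\Gamma)$.

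The real content is (iii)--(v), and it is localized at the branch point $-i$. I would compute $\im P^{-}_{\Gamma}$ directly from (\ref{2.22}): as $f$ runs through $C_{\mu}(\Gamma)$, the pair $(f_{\mathcal{E}},\lambda_{+}^{2}f_{\mathcal{O}})$ runs through $C_{\mu}(\dot{\mathbb{R}})^{2}$, so $\widetilde{P}^{-}_{\mathbb{R}}f_{\mathcal{E}}$ and $w:=\widetilde{P}^{-}_{\mathbb{R}}(\lambda_{+}^{2}f_{\mathcal{O}})$ run through the functions in $C^{-}_{\mu}(\dot{\mathbb{R}})$ vanishing at $-i$. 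The first summand $\widetilde{P}^{-}_{\mathbb{R}}f_{\mathcal{E}}$ pulls back from a function analytic on $\mathbb{C}^{-}$ and so lies in $C^{-}_{\mu}(\Gamma)$. For the second, $\lambda_{+}^{-1}w$ is still analytic on $\mathbb{C}^{-}$ at $-i$ (because $w(-i)=0$), while at the branch point $-i$, where $\tau$ is the local parameter and $\lambda_{+}=\xi+i$ vanishes to order $2$ in $\tau$, the function $\tau\lambda_{+}^{-1}$ has a simple pole; hence $\tau\lambda_{+}^{-2}w=\tau\lambda_{+}^{-1}\cdot(\lambda_{+}^{-1}w)$ is analytic on $\Sigma^{-}$ except, at worst, for a simple pole at $-i$ whose principal part is $(\lambda_{+}^{-1}w)(-i)$ times that of $\tau\lambda_{+}^{-1}$. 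Subtracting off that multiple of $\tau\lambda_{+}^{-1}$ leaves a function analytic on $\Sigma^{-}$, i.e. $\im P^{-}_{\Gamma}=C^{-}_{\mu}(\Gamma)\oplus\spa\{\tau\lambda_{+}^{-1}\}$, which is (iii). For (iv), the pole is absent precisely when $(\lambda_{+}^{-1}w)(-i)=0$; since $w(-i)=0$ this equals $w'(-i)$, and differentiating the Cauchy representation $w=P^{-}_{\mathbb{R}}(\lambda_{+}^{2}f_{\mathcal{O}})-(P^{-}_{\mathbb{R}}(\lambda_{+}^{2}f_{\mathcal{O}}))(-i)$ at $-i$, the weight $(\xi_{0}+i)^{2}$ cancels the double pole of the differentiated kernel and gives $w'(-i)=-\frac{1}{2\pi i}\int_{\dot{\mathbb{R}}}f_{\mathcal{O}}\,d\xi$; a one-line two-sheet symmetry argument (the $f_{\mathcal{E}}/\tau$ contribution is $*$-odd and integrates to zero) identifies $\int_{\Gamma}\tau^{-1}f\,d\xi=2\int_{\dot{\mathbb{R}}}f_{\mathcal{O}}\,d\xi$, so the pole is absent iff (\ref{2.23}) holds, and otherwise it is a simple pole at $-i$. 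Finally (v) is just the reassembly $P^{-}_{\Gamma}f=f_{-}+w'(-i)\,\tau\lambda_{+}^{-1}$ with $f_{-}:=P^{-}_{\Gamma}f-w'(-i)\,\tau\lambda_{+}^{-1}\in C^{-}_{\mu}(\Gamma)$, together with $w'(-i)=-\frac{1}{2\pi i}\int_{\dot{\mathbb{R}}}f_{\mathcal{O}}\,d\xi=-\frac{K}{k_{0}\pi}\alpha_{f}$, using $\int_{\Gamma}\tau^{-1}f\,d\xi=2\int_{\dot{\mathbb{R}}}f_{\mathcal{O}}\,d\xi$ and the definition (\ref{2.25}) of $\alpha_{f}$.

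The step I expect to be the genuine obstacle is the dictionary lemma and its $\Sigma^{-}$-analogue used in (iii)--(v): keeping honest track of orders of vanishing at the four branch points $\pm i$, $\pm ik_{0}$ and at the two points over $\infty$, exploiting that $\lambda_{+}^{-1}$ has a double pole at $-i$ in the local parameter $\tau$, and pinning down the normalization constant through the elliptic integrals $K$, $K'$. Everything else is a mechanical transcription of the scalar Cauchy-projection theory through (\ref{2.22}).
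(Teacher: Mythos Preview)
The paper does not give a proof of this proposition: it states that the result ``follows from (\ref{2.22}) and the properties of $S_{\mathbb{R}}$ and $\widetilde{P}^{\pm}_{\mathbb{R}}$'' and cites \cite{CSS}. Your proposal does precisely what the paper indicates, namely transcribing the classical scalar Cauchy-projection theory on $C_{\mu}(\dot{\mathbb{R}})$ through the identity (\ref{2.22}), and it carries this out correctly, including the localization at the branch point $-i$ and the identification $\int_{\Gamma}\tau^{-1}f\,d\xi=2\int_{\dot{\mathbb{R}}}f_{\mathcal{O}}\,d\xi$ that pins down the constant in (v). So there is nothing to compare against beyond the route the paper already names, and your execution of that route is sound.
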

In what follows we will need two other integrals of Cauchy type using the Behnke-Stein analogue of the Cauchy kernel (\cite{Zverovich2}):
\begin{defn}
For $f \in C_{\mu}(\Gamma)$, let
\begin{equation}
\widetilde{P}^{\pm}_{\Gamma}f(\xi,\tau)=P^{\pm}_{\Gamma}f(\xi,\tau)\mp \frac{\alpha_{f}}{2\pi i}\frac{\tau}{\xi+i}\int_{\mathcal{A}}\frac{\xi_{0}+i}{2\tau_{0}}\frac{d\xi_{0}}{\xi_{0}-\xi}
\label{2.26}
\end{equation}
where $\alpha_{f}$ and $\mathcal{A}$ were defined in (\ref{2.25}) and (\ref{2.18}) respectively.
\end{defn}
We have $f=\widetilde{P}^{+}_{\Gamma}f+\widetilde{P}^{-}_{\Gamma}f$ where $\widetilde{P}^{\pm}_{\Gamma}f$ has an analytic extension to $\Sigma^{\pm}\backslash \mathcal{A}$, its jump across $\mathcal{A}$ being equal to $\alpha_{f}$ (\cite{Zverovich2}).
It is easy to see that $\widetilde{P}^{\pm}_{\Gamma}f=P^{\pm}_{\Gamma}f$ if and only if $\alpha_{f}=0$, i.e., (\ref{2.23}) holds and, in this case, $\widetilde{P}^{\pm}_{\Gamma}f \in C_{\mu}^{\pm}(\Gamma)$.

It is clear that if $f \in C_{\mu}(\Gamma)$ is *-invariant, and can thus be identified with a function in $C_{\mu}(\dot{\mathbb{R}})$, we have $f_{\mathcal{O}}=0$ and $\alpha_f=0$, so that from (\ref{2.22}) and (\ref{2.26}),
\begin{equation}
\widetilde{P}^{\pm}_{\Gamma}f=\widetilde{P}^{\pm}_{\mathbb{R}}f \quad \text{if } f \in C_{\mu}(\dot{\mathbb{R}}).
\label{2.26A}
\end{equation}

\section{Meromorphic functions in $\Sigma$ and $\Sigma^{\pm}$}

In this section we define and study the properties of some functions which are meromorphic in $\Sigma$ or in $\Sigma^{\pm}$ and will be used later.

For $\phi \in C_{\mu}(\Gamma)$, let $\phi_{j}=\phi_{|\Gamma_{j}},$ $j=1,2$. We define $\ind_{j}\phi=\ind \phi_{j}, \quad j=1,2,$
where $\ind \varphi$ denotes the index of a complex function $\varphi$ continuous in $\dot{\mathbb{R}}$ with $\varphi (\xi)\neq 0$ for all $\xi \in \dot{\mathbb{R}}$ (\cite{MP}).
\begin{thm}\label{theo3.4}
 Let $S \in \mathcal{R}(\Sigma)$ be defined, up to a multiplicative constant, by the principal divisor
\begin{equation}
  D_{S}(P)=\left\{\begin{array}{ccc}
                    2 &  \text{ if } &\hspace{-2.2cm}P=\sigma (-\frac{K}{5}), \\
                    -1 & \text{ if } & P=\sigma (K) \text{ or } P=\sigma (-\frac{7K}{5}), \\
                    0 & \hspace{1.1cm} \text{otherwise}.&
                  \end{array}\right.
  \label{3.8}
\end{equation}
 Then we have $\ind_{1}S=-1, \quad \ind_{2}S=0$.
\end{thm}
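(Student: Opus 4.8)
The plan is: first confirm that $D_{S}$ is principal (so that $S$ exists), then read off $\ind_{1}S+\ind_{2}S$ from the argument principle, and finally separate the two indices by an explicit elliptic‑function computation.

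Existence is immediate from Abel's theorem. The divisor $D_{S}$ has degree $2-1-1=0$, and it is principal precisely when its image under $A_{J}$ vanishes in $\mathbb{C}/L$. Since $A_{J}\circ\sigma=\Pi_{L}$, that image is the class of $2(-\tfrac{K}{5})-K-(-\tfrac{7K}{5})=0$, so $S\in\mathcal{R}(\Sigma)$ exists and is unique up to a nonzero multiplicative constant, which does not affect the index and may be ignored.

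By definition $\ind_{j}S=\frac{1}{2\pi i}\oint_{\Gamma_{j}}\frac{dS}{S}$ is the winding number of $S$ along $\Gamma_{j}$. Because $\sigma(\Omega^{+})=\Sigma^{+}$, $\sigma(\Omega^{-})=\Sigma^{-}$, and $K\in\,]0,2K[$ whereas $-\tfrac{K}{5},-\tfrac{7K}{5}\in\,]-2K,0[$, the simple pole $\sigma(K)$ lies in $\Sigma^{+}$ while the double zero $\sigma(-\tfrac{K}{5})$ and the simple pole $\sigma(-\tfrac{7K}{5})$ lie in $\Sigma^{-}$. Since $\Gamma_{1}$ and $\Gamma_{2}$, oriented as pullbacks of $\dot{\mathbb{R}}$, together form the oriented boundary of $\Sigma^{+}$, the argument principle gives $\ind_{1}S+\ind_{2}S$ equal to the number of zeros minus the number of poles of $S$ in $\Sigma^{+}$, that is $0-1=-1$.

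To split this sum one must use $S$ itself, since $\Gamma_{1}$ and $\Gamma_{2}$ are homologous in $\Sigma$ and no purely topological argument distinguishes them. Pulling back by $\sigma$, the function $S\circ\sigma$ is an elliptic function on $\mathbb{C}/L$ with a double zero at $-\tfrac{K}{5}$ and simple poles at $K$ and $-\tfrac{7K}{5}$; as the three chosen representatives already satisfy $2(-\tfrac{K}{5})-K+\tfrac{7K}{5}=0$, it equals, up to a constant, $\varsigma(u+\tfrac{K}{5})^{2}/\bigl(\varsigma(u-K)\,\varsigma(u+\tfrac{7K}{5})\bigr)$, where $\varsigma$ denotes the Weierstrass $\sigma$-function of the lattice $L$ (written $\varsigma$ to avoid clashing with the map $\sigma$); equivalently one may use Jacobi theta functions adapted to the Legendre normalization. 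Hence $\frac{d}{du}\log(S\circ\sigma)=2\,\zeta_{L}(u+\tfrac{K}{5})-\zeta_{L}(u-K)-\zeta_{L}(u+\tfrac{7K}{5})$ with $\zeta_{L}$ the Weierstrass zeta function of $L$. Integrating this over $\gamma_{1}=[iK',-iK']$ and over $\gamma_{2}=[2K-iK',2K+iK']$, and using the quasi-periodicity of $\varsigma$ across the half-period $iK'$ together with the Legendre relation, the quasi-period contributions cancel (again because $2(-\tfrac{K}{5})-K+\tfrac{7K}{5}=0$), leaving an integer fixed by the branch choices of $\log\varsigma$ along the two vertical segments; this gives $\ind_{1}S=-1$ and $\ind_{2}S=0$.

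The main obstacle is precisely this last step: determining the correct integer winding contribution of the multivalued $\log\varsigma$ along $\gamma_{1}$ and $\gamma_{2}$ (equivalently, evaluating the $a$- and $b$-periods of the third-kind differential $d\log S$). The identity $\ind_{1}S+\ind_{2}S=-1$ is a useful check but does not by itself produce the split, which genuinely needs the explicit form of $S$ on $\Gamma$.
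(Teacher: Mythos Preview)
Your first two steps (existence of $S$ via Abel, and $\ind_{1}S+\ind_{2}S=-1$ via the argument principle in $\Sigma^{+}$) are fine and match the paper. The gap is exactly where you say it is: in the splitting step you assert that the Weierstrass calculation ``gives $\ind_{1}S=-1$ and $\ind_{2}S=0$'' but then immediately concede that determining the integer winding contribution of $\log\varsigma$ along $\gamma_{1}$ and $\gamma_{2}$ is the ``main obstacle.'' Your computation with $\zeta_{L}$ does show that the $\eta$-terms cancel (precisely because $2(-\tfrac{K}{5})-K+\tfrac{7K}{5}=0$), so each period of $dS/S$ is an integer multiple of $2\pi i$; but that is automatic for any meromorphic logarithmic differential, and you have not produced an argument pinning down \emph{which} integer. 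Without that, the proof is incomplete.

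The paper closes this gap by a different, cleaner device that avoids any branch bookkeeping. Following Miranda's proof of Abel's theorem one has the bilinear relation
\[
2\pi i\sum_{p}D_{S}(\sigma(p))\,p \;=\; 4K\cdot B\!\left(\tfrac{dS}{S}\right)\;-\;2iK'\cdot A\!\left(\tfrac{dS}{S}\right),
\]
where $B$ is the $\Gamma_{2}$-period and $A$ the $\mathcal{S}_{1}$-period. With the chosen representatives the left side is $2\pi i\bigl(-\tfrac{2K}{5}-K+\tfrac{7K}{5}\bigr)=0$ \emph{exactly} (not merely modulo $L$). Since $A,B\in 2\pi i\,\mathbb{Z}$ and $4K,\,2iK'$ are $\mathbb{R}$-linearly independent, this forces $A=B=0$. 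In particular $\ind_{2}S=\tfrac{1}{2\pi i}B(\tfrac{dS}{S})=0$, and then the residue computation gives $\ind_{1}S=-1$. The point you are missing is that the very identity underlying Abel's theorem already relates the two periods linearly, so the vanishing of the divisor sum in $\mathbb{C}$ (not just in $\mathbb{C}/L$) forces both periods to vanish individually---no explicit $\varsigma$-tracking is needed.
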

\begin{proof}
Following the proof of Abel's theorem in \cite{Miranda}, we have
\begin{equation*}
2\pi i \sum_{p\in \mathcal{P}}D_{S}(\sigma(p))\int_{0}^{p}dz=4K.B(\frac{dS}{S})-2iK'.A(\frac{dS}{S})
\end{equation*}
where $B(dS/S)$ and $A(dS/S)$ denote the $\Gamma_{2}$-period and the $\mathcal{S}_{1}$-period of $dS/S$ (where $\mathcal{S}_{1}=\sigma (s_{1})$),  respectively. From (\ref{3.8}) we see that
\begin{equation*}
2\pi i (-\frac{2K}{5}-K+\frac{7K}{5})=0=4K.B(\frac{dS}{S})-2iK'.A(\frac{dS}{S})
\end{equation*}
and since the $\Gamma_{2}$-period and the $\mathcal{S}_{1}$-period of $dS/S$ are integral multiples of $2\pi i$, we conclude that
\begin{equation}
B(\frac{dS}{S})=A(\frac{dS}{S})=0.
\label{3.12}
\end{equation}
On the other hand, by the residue theorem,
\begin{equation*}
\frac{1}{2\pi i}\int_{\Gamma}\frac{dS}{S}=\frac{1}{2\pi i}\left(\int_{\Gamma_{1}}\frac{dS}{S}+\int_{\Gamma_{2}}\frac{dS}{S}\right)=-1
\end{equation*}
and from (\ref{3.12}) it follows that
\begin{equation*}
\frac{1}{2\pi i}\int_{\Gamma_{1}}\frac{dS}{S}=-1, \quad \frac{1}{2\pi i}\int_{\Gamma_{2}}\frac{dS}{S}=0.
\end{equation*}
\end{proof}
In the following sections we will also need  some functions which are not rational, but merely meromorphic in an open set containing $\Sigma^+\cup\Gamma$ or $\Sigma^-\cup\Gamma$.
Let $\rho_{+}=\sqrt{(\xi+i)(\xi+ik_{0})}$ denote the branch of the square-root which is analytic in the complex plane cut along $[-i,-ik_{0}]$ and takes the value $i\sqrt{k_{0}}$ for $\xi=0$. Analogously, let $\rho_{-}=\sqrt{(\xi-i)(\xi-ik_{0})}$ denote the branch which is analytic in the complex plane cut along $[i,ik_{0}]$ and takes the value $-i\sqrt{k_{0}}$ for $\xi=0$. We have $\rho=\rho_{-}\rho_{+}$. Let moreover $\alpha_{+}, \alpha_{-}$ be the functions defined by
\begin{equation}
\alpha_{+}(\xi,\tau)=C+\frac{\tau}{(\xi-i)\rho_{+}}, \quad \alpha_{-}(\xi,\tau)=C+\frac{\tau}{(\xi+i)\rho_{-}}
\label{3.19}
\end{equation}
where
\begin{equation}
C=\sqrt{\frac{1+k_{0}}{2}}>0.
\label{3.20}
\end{equation}
We remark, for future reference, that
\begin{equation}
C^2-1=-(C^2-k_{0})=\frac{k_{0}-1}{2}
\label{3.20A}
\end{equation}
and, for $\alpha_{\pm}$ defined by (\ref{3.19}),
\begin{equation}
\alpha_{+}(\alpha_{+})_*=\frac{k_{0}-1}{2}\frac{\lambda_{+}}{\lambda_{-}}, \quad \alpha_{-}(\alpha_{-})_*=\frac{k_{0}-1}{2}\frac{\lambda_{-}}{\lambda_{+}}, \label{3.20B}
\end{equation}
where
\begin{equation}
\lambda_{\pm}(\xi)=\xi \pm i. \label{3.20C}
\end{equation}
These functions have moreover the following properties.
\begin{thm}\label{theo3.6}
For $\alpha_{\pm}$ defined as above, we have:
\begin{itemize}
  \item[(i)] $\alpha_{+}\in \mathcal{M}(\Sigma^{+})$ with a single (simple) pole at the branch point $i$ and no zeros in $\Sigma^{+}$, and
        \begin{equation}
        \ind_{1}\alpha_{+}=0, \quad \ind_{2}\alpha_{+}=-1;
        \label{3.21}
        \end{equation}
  \item[(ii)] $\alpha_{-}\in \mathcal{M}(\Sigma^{-})$ with a single (simple) pole at the branch point $-i$ and no zeros in $\Sigma^{-}$, and
        \begin{equation}
        \ind_{1}\alpha_{-}=0, \quad \ind_{2}\alpha_{-}=1.
        \label{3.22}
        \end{equation}
\end{itemize}
\end{thm}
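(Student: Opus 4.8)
The plan is to verify each of the listed properties for $\alpha_+$ directly from its explicit formula $\alpha_+(\xi,\tau)=C+\tau/((\xi-i)\rho_+)$, and then obtain the statement for $\alpha_-$ either by the analogous argument or by exploiting the symmetry between the two under the involution $\xi\mapsto -\xi$ (which interchanges the branch cuts $[-i,-ik_0]$ and $[i,ik_0]$ and swaps $\rho_+\leftrightarrow\rho_-$, $\lambda_+\leftrightarrow\lambda_-$). I will write the argument for $\alpha_+$; part (ii) follows mutatis mutandis.

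First I would check that $\alpha_+\in\mathcal M(\Sigma^+)$ with the stated pole and no zeros. Since $\rho_+=\sqrt{(\xi+i)(\xi+ik_0)}$ is analytic and nonvanishing on $\mathbb C\setminus[-i,-ik_0]$, and this cut lies in $\Sigma^-$ (it projects to the lower half-plane), the function $\tau/((\xi-i)\rho_+)$ is meromorphic on a neighbourhood of $\Sigma^+\cup\Gamma$; $\tau$ vanishes simply at the branch point $i$ there, so $\tau/(\xi-i)$ has a removable singularity at $i$ — wait, more carefully: in the local parameter $\tau$ at the branch point $i$, we have $\xi-i\sim c\tau^2$, so $\tau/(\xi-i)\sim (c\tau)^{-1}$ has a simple pole. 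Thus $\alpha_+$ has exactly one simple pole in $\Sigma^+$, located at $i$. For the absence of zeros, I would use the relation \eqref{3.20B}: $\alpha_+(\alpha_+)_*=\tfrac{k_0-1}{2}\,\lambda_+/\lambda_-$, whose right-hand side is nonzero on $\Sigma^+\setminus\{-i\}$ and has a simple zero only at $-i\in\Sigma^-$ (well, at the preimage of $-i$), so $\alpha_+$ cannot vanish anywhere in $\Sigma^+$; a zero of $\alpha_+$ in $\Sigma^+$ would force a zero of the product there, contradicting that $\lambda_+/\lambda_-$ is zero-free on $\Sigma^+$. (One should double-check \eqref{3.20B} by the identity $\rho_+(\rho_+)_* = -\rho_+^2/\!\cdots$; this is a routine algebraic verification using $\tau^2=\mathfrak p$, $(\alpha_+)_*=C-\tau/((\xi-i)\rho_+)$, and \eqref{3.20A}.)

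Next, the index computation \eqref{3.21}. On $\Gamma_1$ (the upper sheet) $\tau=\rho=\rho_-\rho_+$, so $\alpha_+|_{\Gamma_1}=C+\rho_-/(\xi-i)=C+(\xi-i)^{-1}\sqrt{(\xi-i)(\xi-ik_0)}=C+\sqrt{(\xi-ik_0)/(\xi-i)}$, which I would rewrite and track as $\xi$ runs over $\dot{\mathbb R}$: the argument of $C$ plus a square-root term that is a small perturbation (indeed $\sqrt{(\xi-ik_0)/(\xi-i)}\to 1$ as $\xi\to\infty$ and stays in a half-plane-type region), so the curve $\xi\mapsto\alpha_+(\xi,\rho(\xi))$ does not wind around $0$, giving $\ind_1\alpha_+=0$. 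On $\Gamma_2$ we have $\tau=-\rho$, so $\alpha_+|_{\Gamma_2}=C-\sqrt{(\xi-ik_0)/(\xi-i)}$; here the value at $\xi=0$ is $C-\sqrt{k_0}<0$ by \eqref{3.20A}, while at $\xi=\infty$ it is $C-1>0$, and the continuous branch of the square root produces exactly one net crossing, so the winding number is $\pm1$; combined with the divisor count below this pins it to $-1$, giving $\ind_2\alpha_+=-1$. Alternatively, and more cleanly, I would argue via the argument principle on $\Sigma^+$: the total change of argument of $\alpha_+$ around $\partial\Sigma^+=\Gamma_1\cup\Gamma_2$ (with appropriate orientations) equals $2\pi i$ times (number of zeros minus number of poles of $\alpha_+$ in $\Sigma^+$) $=2\pi i\cdot(0-1)=-2\pi i$; since $\ind_1\alpha_+=0$ by the perturbation argument above, the remaining contribution $\ind_2\alpha_+$ must equal $-1$. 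This last route is the one I would actually write down, since it reduces the delicate curve-tracing to a single easy case.

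The main obstacle I anticipate is the bookkeeping of orientations and of which sheet carries which sign of $\tau$ when applying the argument principle on $\Sigma^+$: one must be careful that $\Gamma_2$ inherits the orientation opposite to the boundary orientation of $\Sigma^+$ on the lower sheet, and that the branch-point pole at $i$ is counted once (not twice) despite $i$ being a ramification point. Everything else — the evaluation of $\alpha_+$ on $\Gamma_1,\Gamma_2$, the limit behaviour at $\infty$, and the sign facts from \eqref{3.20A}–\eqref{3.20B} — is elementary once these conventions are fixed.
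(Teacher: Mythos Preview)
Your treatment of the meromorphy, the simple pole at the branch point $i$, and the absence of zeros via the product identity \eqref{3.20B} matches the paper's proof exactly. The difference lies in how the indices are computed. The paper bypasses both the curve-tracing and the argument-principle route by observing that the restrictions $\alpha_+|_{\Gamma_j}$, regarded as functions on $\dot{\mathbb R}$, live in (or factor through) $\mathcal G C_\mu^-(\dot{\mathbb R})$: since $\rho_-$ is analytic off the cut $[i,ik_0]\subset\mathbb C^+$, the function $\alpha_+|_{\Gamma_1}=C+\rho_-/(\xi-i)$ extends holomorphically and without zeros to $\mathbb C^-$, so $\ind_1\alpha_+=0$ immediately; and $\alpha_+|_{\Gamma_2}=C-\rho_-/(\xi-i)$ factors as $(\lambda_+/\lambda_-)\tilde\alpha$ with $\tilde\alpha\in\mathcal G C_\mu^-(\dot{\mathbb R})$, giving $\ind_2\alpha_+=\ind(\lambda_+/\lambda_-)=-1$. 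This is shorter and sidesteps all the orientation bookkeeping you flagged as the main obstacle.

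Your argument-principle approach is valid in principle, but note that it still rests on an independent verification of $\ind_1\alpha_+=0$, and your ``small perturbation'' justification for that is not yet a proof: the term $\rho_-/(\xi-i)$ is \emph{not} small on $\mathbb R$ (at $\xi=0$ it equals $\sqrt{k_0}>C$), so the image curve is not a perturbation of the constant $C$. The paper's observation that $C+\rho_-/(\xi-i)\in\mathcal G C_\mu^-(\dot{\mathbb R})$ is precisely what makes that step rigorous, and once you have it, the factorization of $\alpha_+|_{\Gamma_2}$ is more direct than invoking the argument principle on $\Sigma^+$.
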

\begin{proof}
\begin{itemize}
  \item[(i)] It is clear that $\alpha_{+}$ is holomorphic in an open set containing \linebreak$\Sigma^{+}\cup\Gamma$, except for the branch point $i$, where it has a simple pole. It has no zeros in $\Sigma^{+}\cup\Gamma$ since from (\ref{3.20B}) $\alpha_{+}(\xi,\tau)\alpha_{+}(\xi,-\tau)=(\xi+\nolinebreak i)(\xi-\nolinebreak i)^{-1}\linebreak(k_{0}-\nolinebreak 1)/2$.

        On the other hand, identifying $\alpha_{+|\Gamma_{j}}$, $j=1,2$, with functions in $C_{\mu}(\dot{\mathbb{R}})$, we have $\alpha_{+|\Gamma_{1}}=C+\rho_{-}(\xi-i)^{-1}, \quad  \alpha_{+|\Gamma_{2}}=C-\rho_{-}(\xi-i)^{-1}$.

        The function  $\alpha_{+|\Gamma_{1}}$ is invertible in $C_{\mu}^-(\dot{\mathbb{R}})$, so that $\ind_{1}\alpha_{+}=0$, while $\alpha_{+|\Gamma_{2}}=(\xi+i)(\xi-i)^{-1}\tilde{\alpha},$  with $\tilde{\alpha}\in \mathcal{G}C_{\mu}^-(\dot{\mathbb{R}})$, so that $\ind_{2}\alpha_{+}=-1$.
  \item[(ii)] can be proved analogously.
\end{itemize}\end{proof}
\begin{cor}\label{cor3.7}
With the same assumptions as in Theorem \ref{theo3.6}, we have
\begin{equation}
\hspace{-5.6cm}\ind_{1}(\alpha_{+})_*=-1, \quad \ind_{2}(\alpha_{+})_*=0, \label{3.22A}
\end{equation}
\begin{equation}
\hspace{-5.9cm}\ind_{1}(\alpha_{-})_*=1, \quad \ind_{2}(\alpha_{-})_*=0, \label{3.22B}
\end{equation}
and
\begin{eqnarray}
     \alpha_{+}^{-1}(\alpha_{+})_* \in \mathcal{G}(C_{\mu}^+(\Gamma)),  & \ind_{1}(\alpha_{+}^{-1}(\alpha_{+})_*)=-1=-\ind_{2}(\alpha_{+}^{-1}(\alpha_{+})_*) \label{3.22C} \\
     \alpha_{-}^{-1}(\alpha_{-})_* \in \mathcal{G}(C_{\mu}^-(\Gamma)),  & \ind_{1}(\alpha_{-}^{-1}(\alpha_{-})_*)=1=-\ind_{2}(\alpha_{-}^{-1}(\alpha_{-})_*). \label{3.22D}
   \end{eqnarray}
\end{cor}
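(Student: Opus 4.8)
The plan is to deduce Corollary \ref{cor3.7} from Theorem \ref{theo3.6} by exploiting the multiplicativity of the index on each component $\Gamma_j$ together with the behaviour of the index under the hyperelliptic involution $*$. First I would observe that $*$ maps $\Gamma_1$ onto $\Gamma_2$ (and $\Gamma_2$ onto $\Gamma_1$) as paths, but \emph{reverses} the orientation, since the projection $\Pi$ is unchanged by $*$ while the sheet is swapped and $\Gamma_j$ carries the orientation induced by $\dot{\mathbb R}$. Hence, identifying $(\alpha_+)_{*|\Gamma_1}$ with the function $\alpha_{+|\Gamma_2}$ read on $\dot{\mathbb R}$ but with reversed orientation, we get $\ind_1(\alpha_+)_* = -\ind_2\alpha_+$ and $\ind_2(\alpha_+)_* = -\ind_1\alpha_+$. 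Plugging in (\ref{3.21}) gives $\ind_1(\alpha_+)_* = -(-1) = -1$ and $\ind_2(\alpha_+)_* = 0$, which is (\ref{3.22A}); the same argument applied to (\ref{3.22}) yields (\ref{3.22B}). Alternatively, and perhaps cleaner, one can read $\ind_j$ directly off the explicit expressions: $(\alpha_+)_{*|\Gamma_1} = \alpha_{+|\Gamma_2} = (\xi+i)(\xi-i)^{-1}\tilde\alpha$ with $\tilde\alpha \in \mathcal{G}C_\mu^-(\dot{\mathbb R})$ — but here $\tilde\alpha$ is being viewed on $\Gamma_1$, where "$-$" and "$+$" roles relative to the sheet are interchanged, so care is needed; I would present the orientation-reversal argument as the primary one and mention the explicit check as confirmation.

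For the membership and index statements (\ref{3.22C})–(\ref{3.22D}), I would argue as follows. By Theorem \ref{theo3.6}(i), $\alpha_+ \in \mathcal{M}(\Sigma^+)$ with its only pole at the branch point $i$ and no zeros in $\Sigma^+$; applying $*$, $(\alpha_+)_*$ is meromorphic in $\Sigma^+_* = \Sigma^+$ with its only pole at $*(i) = i$ (the branch point is $*$-fixed) and no zeros. Therefore the quotient $\alpha_+^{-1}(\alpha_+)_*$ has its pole from $(\alpha_+)_*$ exactly cancelled by the zero of $\alpha_+^{-1}$ at $i$, and its zero from $\alpha_+^{-1}$... — more precisely, since both $\alpha_+$ and $(\alpha_+)_*$ have a simple pole at $i$ and are otherwise holomorphic and nonvanishing on $\Sigma^+\cup\Gamma$, the ratio is holomorphic and nonvanishing on $\Sigma^+\cup\Gamma$, hence lies in $\mathcal{G}C_\mu^+(\Gamma)$. (One should check the ratio is genuinely finite and nonzero at $i$ itself, which follows from comparing the leading Laurent coefficients in the local parameter $\tau$; from (\ref{3.19}) both have residue-type behaviour governed by $\tau/((\xi\mp i)\rho_\mp)$ and the constant $C$ drops out of the singular part, so the leading coefficients agree up to sign and the ratio tends to a nonzero limit.) Then by multiplicativity of the index on $\Gamma_1$, $\ind_1(\alpha_+^{-1}(\alpha_+)_*) = -\ind_1\alpha_+ + \ind_1(\alpha_+)_* = 0 + (-1) = -1$, and similarly $\ind_2(\alpha_+^{-1}(\alpha_+)_*) = -\ind_2\alpha_+ + \ind_2(\alpha_+)_* = -(-1) + 0 = 1$, giving (\ref{3.22C}). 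The statement (\ref{3.22D}) is obtained identically with $\alpha_-$ and $\Sigma^-$.

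The main obstacle I anticipate is the bookkeeping around orientation and the sheet-swap when transporting index data through $*$: it is easy to get a sign wrong in the relations $\ind_j f_* = -\ind_{3-j} f$, and one must be consistent about how $\alpha_{\pm|\Gamma_j}$ is identified with an element of $C_\mu(\dot{\mathbb R})$ (the paper fixes this in Section 2, and the explicit formulas $\alpha_{+|\Gamma_1} = C + \rho_-(\xi-i)^{-1}$, $\alpha_{+|\Gamma_2} = C - \rho_-(\xi-i)^{-1}$ from the proof of Theorem \ref{theo3.6} are the safe place to anchor the computation). The second delicate point is verifying that the pole cancellation at the branch point $i$ really produces an invertible element of $C_\mu^+(\Gamma)$ rather than something with a residual zero or pole; this is where the explicit form (\ref{3.19}) and the identities (\ref{3.20B}) are needed, since they pin down the behaviour of $\alpha_+$ and $(\alpha_+)_*$ near $i$ precisely. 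Everything else is a direct application of Theorem \ref{theo3.6}, Corollary's own earlier lines, and multiplicativity of $\ind$.
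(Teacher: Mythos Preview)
Your overall strategy (deduce everything from Theorem~\ref{theo3.6} via multiplicativity of the index and the pole cancellation at the branch point) is exactly right, and the argument for $\alpha_+^{-1}(\alpha_+)_*\in\mathcal G C_\mu^+(\Gamma)$ is fine. However, the key step for (\ref{3.22A})--(\ref{3.22B}) contains a genuine error.

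You assert that $*$ maps $\Gamma_1$ to $\Gamma_2$ \emph{with reversed orientation}, and hence $\ind_j f_* = -\ind_{3-j} f$. This is wrong: the orientation on each $\Gamma_j$ is, by definition (Section~2), induced from $\dot{\mathbb R}$ via the projection $\Pi$, and $*$ preserves $\Pi$. Therefore $*:\Gamma_1\to\Gamma_2$ is orientation-\emph{preserving}, and the correct relation is
\[
\ind_j f_* \;=\; \ind_{3-j} f,
\]
with no minus sign. Concretely, $(\alpha_+)_{*|\Gamma_1}(\xi,\rho(\xi))=\alpha_+(\xi,-\rho(\xi))=\alpha_{+|\Gamma_2}(\xi)$ as functions of $\xi\in\dot{\mathbb R}$ with its standard orientation, so $\ind_1(\alpha_+)_*=\ind_2\alpha_+=-1$ directly. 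Your computation ``$\ind_1(\alpha_+)_*=-(-1)=-1$'' reaches the right number only because of an arithmetic slip ($-(-1)=1$, not $-1$); applied consistently, your sign rule would give $\ind_1(\alpha_+)_*=1$ and $\ind_1(\alpha_-)_*=-1$, both incorrect. Your own stated reasoning (``the projection $\Pi$ is unchanged by $*$ \ldots\ and $\Gamma_j$ carries the orientation induced by $\dot{\mathbb R}$'') actually supports preservation, not reversal.

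Once you drop the spurious minus sign, the entire corollary follows immediately: (\ref{3.22A}) and (\ref{3.22B}) from $\ind_j f_*=\ind_{3-j}f$ and (\ref{3.21})--(\ref{3.22}); the membership in $\mathcal G C_\mu^\pm(\Gamma)$ from the pole cancellation you describe (or, more cleanly, from (\ref{3.20B}), which gives $\alpha_+^{-1}(\alpha_+)_*=\tfrac{k_0-1}{2}\,\lambda_+\lambda_-^{-1}\,\alpha_+^{-2}$ and makes the behaviour at the branch point $i$ transparent); and the index identities in (\ref{3.22C})--(\ref{3.22D}) by multiplicativity, exactly as you wrote. The paper states the corollary without proof, so this corrected argument is precisely the intended one.
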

In the following theorems, we choose branches of $\log \alpha_{\pm}$ on $\Gamma$ such that $(\log \alpha_{\pm})_{|\Gamma_{1}}$ and $(\log \alpha_{\pm})_{|\Gamma_{2}\setminus \{\mathbf{\infty_{2}}\}}$ are continuous.
\begin{thm}\label{lemma3.2}With the same assumptions as in Theorem \ref{theo3.6}, we have
\begin{equation}
  \frac{k_{0}}{2\pi}\int_{\Gamma}\frac{\log \alpha_{\pm}}{\tau}d\xi =-K-iK' \mod L,
  \label{3.5}
\end{equation}
\begin{equation}
  \frac{k_{0}}{2\pi}\int_{\Gamma}\frac{\log (\alpha_{\pm})_*}{\tau}d\xi =K-iK' \mod L,
  \label{3.5B}
\end{equation}
\begin{equation}
        \frac{k_{0}}{2\pi}\int_{\Gamma}\frac{\log (\alpha_{+}^{-1}(\alpha_{+})_*)}{\tau}d\xi=2K \quad \mod L.
\label{3.24}
\end{equation}
\end{thm}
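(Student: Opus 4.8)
The plan is to establish (\ref{3.5}) for $\alpha_{+}$ and to deduce the three remaining identities from it by symmetry. For $\alpha_{-}$, note that $\alpha_{-}(\xi,\tau)=\overline{\alpha_{+}(\bar\xi,\bar\tau)}$ — immediate from (\ref{3.19}) together with the chosen branches of $\rho_{\pm}$, since $\overline{\rho_{+}(\bar\xi)}=\rho_{-}(\xi)$ — and that $\Gamma$, $\tau_{|\Gamma}$ and $d\xi_{|\Gamma}$ are real, whence $\frac{k_{0}}{2\pi}\int_{\Gamma}\frac{\log\alpha_{-}}{\tau}\,d\xi=\overline{-K-iK'}=-K+iK'\equiv-K-iK'\pmod L$ (because $2iK'\in L$). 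Next, the substitution $(\xi,\tau)\mapsto(\xi,-\tau)$ maps $\Gamma$ onto itself, interchanges $\Gamma_{1}$ and $\Gamma_{2}$ while preserving their orientations (it fixes $\xi$), and sends $\tau$ to $-\tau$; hence $\frac{k_{0}}{2\pi}\int_{\Gamma}\frac{\log(\alpha_{\pm})_{*}}{\tau}\,d\xi=-\frac{k_{0}}{2\pi}\int_{\Gamma}\frac{\log\alpha_{\pm}}{\tau}\,d\xi\equiv K+iK'\equiv K-iK'\pmod L$, which is (\ref{3.5B}); and (\ref{3.24}) is then the difference of (\ref{3.5B}) and (\ref{3.5}) for $\alpha_{+}$. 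Throughout one uses that replacing $\log g$ by another branch changes $\frac{k_{0}}{2\pi}\int_{\Gamma}\frac{\cdot}{\tau}\,d\xi$ by $\frac{k_{0}}{2\pi}\int_{\Gamma}\frac{2\pi i\,n}{\tau}\,d\xi$ with $n$ locally constant and integer-valued, and this lies in $L$: indeed $n$ is constant on $\Gamma_{1}$ and on $\Gamma_{2}\setminus\{\mathbf{\infty_{2}}\}$, and with $\int_{\Gamma_{1}}\frac{d\xi}{\tau}=2K'/k_{0}$ and $\int_{\Gamma_{2}}\frac{d\xi}{\tau}=-2K'/k_{0}$ the contribution equals $2iK'(n_{1}-n_{2})\in L$.

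It remains to evaluate $\frac{k_{0}}{2\pi}\int_{\Gamma}\frac{\log\alpha_{+}}{\tau}\,d\xi=\frac{i}{2\pi}\int_{\Gamma}\log\alpha_{+}\,dA_{J}$, using $dA_{J}=\frac{k_{0}}{i}\frac{d\xi}{\tau}$, by a contour deformation on $\Sigma^{+}$. By Theorem \ref{theo3.6}, $\alpha_{+}$ is holomorphic and non-vanishing on $\Sigma^{+}$ except for one simple pole at the branch point $i$, so $\log\alpha_{+}$ is holomorphic on $\Sigma^{+}\setminus\{i\}$, multivalued with monodromy $-2\pi i$ around $i$ (the residue of $d\log\alpha_{+}$ there being $-1$); since $\Sigma^{+}$ is a cylinder with boundary $\Gamma_{1}\cup\Gamma_{2}$, cutting it along an arc $\ell$ from $i$ to the boundary makes $\log\alpha_{+}$ single-valued, and the branch convention together with $\ind_{1}\alpha_{+}=0$, $\ind_{2}\alpha_{+}=-1$ (Theorem \ref{theo3.6}) forces $\ell$ to terminate at $\mathbf{\infty_{2}}\in\Gamma_{2}$, where the jump of $\log\alpha_{+}$ is concentrated. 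The form $\log\alpha_{+}\,dA_{J}$ is holomorphic, hence closed, on $\Sigma^{+}\setminus(\ell\cup\{i\})$, so deforming $\Gamma$ onto a small loop around $i$ together with the two banks of $\ell$, the loop integral vanishes in the limit and the bank integrals combine to $\pm2\pi i\bigl(A_{J}(\mathbf{\infty_{2}})-A_{J}(i)\bigr)$, with $2\pi i$ the jump of $\log\alpha_{+}$ across $\ell$ and $A_{J}(\mathbf{\infty_{2}})-A_{J}(i)=\int_{\ell}dA_{J}$. Multiplying by $i/2\pi$ and fixing signs from the orientation of $\Gamma=\partial\Sigma^{+}$ and the sense of the monodromy, one obtains
\[
\frac{k_{0}}{2\pi}\int_{\Gamma}\frac{\log\alpha_{+}}{\tau}\,d\xi=A_{J}(i)-A_{J}(\mathbf{\infty_{2}})=K-(2K+iK')=-K-iK'\pmod L ,
\]
using $A_{J}(i)=K$ and $A_{J}(\mathbf{\infty_{2}})=2K+iK'$ modulo $L$; these are read off from the description of $\mathcal{A}$, $\gamma_{1}$, $\gamma_{2}$ and $\sigma$ in Section 2 (the branch point $i$ is the turning point of $\mathcal{A}=\sigma([-2K,2K])$ lying over $\xi=i$, and $\mathbf{\infty_{2}}$ is the point over $\xi=\infty$ on $\Gamma_{2}=\sigma(\gamma_{2})$, whose midpoint $\sigma(2K)$ lies over $\xi=0$), or directly from the uniformization $\xi=i\operatorname{sn}(z,1/k_{0})$ of (\ref{2.2}). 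This proves (\ref{3.5}) for $\alpha_{+}$, and is the same circle of ideas as the proof of Theorem \ref{theo3.4}, with the Abel-theorem/residue-theorem computation replaced by a deformation adapted to the logarithmic singularity of $\log\alpha_{+}$.

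The step I expect to be the main obstacle is the bookkeeping in the previous paragraph: correctly orienting $\Gamma$ as $\partial\Sigma^{+}$ and the two banks of $\ell$, pinning down the sign of the monodromy, and — more conceptually — justifying that $\ell$ must end exactly at $\mathbf{\infty_{2}}$, which is precisely the point where the answer becomes tied to the index data $\ind_{1}\alpha_{+}=0$, $\ind_{2}\alpha_{+}=-1$ rather than merely to the divisor of $\alpha_{+}$. The symmetry reductions of the first paragraph and the identification of the Abel–Jacobi images of $i$ and $\mathbf{\infty_{2}}$ are routine.
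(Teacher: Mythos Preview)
Your proposal is correct and the core computation for $\alpha_{+}$ follows the same idea as the paper's proof: make $\log\alpha_{+}$ single-valued by a cut from the pole at $i$, and pick up the answer from the jump across the cut via Cauchy/Stokes. The paper carries this out in the rectangular fundamental domain $\Omega^{+}\subset\mathcal{P}$, cutting along $l=[K+iK',K]$ to the top side $s_{1}$; the relations (\ref{3.5F})--(\ref{3.5G}) between top and bottom then encode exactly the index information $\ind_{1}\alpha_{+}=0$, $\ind_{2}\alpha_{+}=-1$ that you use to force your cut $\ell$ to end at $\mathbf{\infty_{2}}\in\Gamma_{2}$. Your version, working directly on the cylinder $\Sigma^{+}$ and reading off the answer as $A_{J}(i)-A_{J}(\mathbf{\infty_{2}})$, is geometrically cleaner and makes the role of the Abel--Jacobi map explicit; the paper's version has the advantage of working entirely in a simply connected region, so the single-valuedness of $F$ is automatic rather than requiring the observation that the monodromy around the cylinder generator vanishes.

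One genuine difference: for the remaining identities the paper simply says ``can be deduced analogously,'' meaning a repetition of the contour argument, whereas you reduce everything to the single case $\alpha_{+}$ via the conjugation symmetry $\alpha_{-}=\overline{\alpha_{+}(\bar{\cdot},\bar{\cdot})}$ and the $*$-involution. That is a real economy and a nicer argument. Your own caveat is well placed: the sign/orientation bookkeeping in the deformation step is the only delicate point, and you should expect to have to pin it down carefully (direction of $\Gamma$ as $\partial\Sigma^{+}$, sign of the jump of $\log\alpha_{+}$ across $\ell$) rather than just citing the target value.
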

\begin{proof}
Let $D$ be an open set in $\mathcal{P}$ containing  $\inter (\Omega^{+}\cup\gamma_{1}\setminus\{\pm iK'\})$ and let $\tilde{D}$ be the simply connected domain obtained from $D$ by removing the points in the line segment $l=[K+iK',K]$. Let moreover $a_{1}=[-iK',2K-iK']$, \linebreak $b_{1}=[K+iK',iK']$, $b_{2}=b_{1}+K$.

We can define $F$ holomorphic in $\tilde{D}$ and continuous in $(\Omega^{+}\cup\partial\Omega^{+})\setminus l$ such that
\begin{eqnarray}
  F_{|\gamma_{1}} &=& \log \alpha_{+}\circ \sigma_{|\gamma_{1}} \label{3.5D} \\
  F_{|\gamma_{2}} &=& \log \alpha_{+}\circ \sigma_{|\gamma_{2}} \mod 2\pi i \mathbb{Z}\label{3.5E}
\end{eqnarray}
%
and, since $\ind_{1} \alpha_{+}=0$, $\ind_{2} \alpha_{+}=-1$,
\begin{eqnarray}
  F(z) &=& F(z-2iK'), \text{ for } z \in b_{1} \label{3.5F}\\
  F(z) &=& F(z-2iK')-2\pi i, \text{ for } z \in b_{2}. \label{3.5G}
\end{eqnarray}
%
We have then
\begin{eqnarray*}
                 0&=&\int_{\gamma_{1}}F(z)dz+\int_{a_{1}}F(z)dz+\int_{\gamma_{2}}F(z)dz+ \\
                 & & +\int_{b_{2}}F(z)dz+2\pi i\int_{K}^{K+iK'}dz+\int_{b_{1}}F(z)dz
               \end{eqnarray*}
so that, taking (\ref{3.5E}), (\ref{3.5F}) and (\ref{3.5G}) into account, we obtain the equality (\ref{3.5}) for $\alpha_+$.
The other equalities can be deduced analogously.
\end{proof}
Finally, we define and study some properties of a rational function $r_{\nu}$,
\begin{equation}
\hspace{-0.6cm}r_{\nu}(\xi,\tau)=\nu +\frac{\tau}{(\xi+i)(\xi-ik_{0})},
\label{3.16}
\end{equation} with  $\nu \in \mathbb{C}$ defined, for each value of $\beta \in \mathcal{P}_{1}\setminus \{0\}$ where
\begin{equation}
\mathcal{P}_{1}=\left\{s+it: s \in ]-K,K[, \ t \in ]-iK',iK'] \right\}
\label{3.13}
\end{equation}
by
\begin{equation}
\hspace{-2.3cm}\nu =\frac{-\tau_{0}}{(z_{0}+i)(z_{0}-ik_{0})}
\label{3.15}
\end{equation} where
\begin{equation}
\hspace{-2.3cm}(z_{0},\tau_{0})=\sigma(-K+\beta).
\label{3.14}
\end{equation}
We remark that $z_{0}$ in (\ref{3.14}) is such that $z_{0}\in \mathbb{C}^{-}$ and therefore $k_{0}z_{0}^{-1}\in \mathbb{C}^{+}$.
\begin{thm}\label{theo3.5}
The rational function $r_{\nu}$ defined above for each $\beta \in \mathcal{P}_{1}\setminus \{0\}$ has two simple poles at the branch points $-i, ik_{0},$ two simple zeros $(z_{0},\tau_{0}), \linebreak[4] (k_{0}z_{0}^{-1},k_{0}z_{0}^{-2}\tau_{0})$, no other zeros or poles, and is such that
\begin{equation}
\ind_{1}r_{\nu} =\ind_{2}r_{\nu} =0 \ ,
\label{3.17}
\end{equation}
\begin{equation}
\frac{k_{0}}{2\pi}\int_{\Gamma}\frac{\log r_{\nu}}{\tau}d\xi=\beta \ \mod L.
\label{3.18}
\end{equation}
\end{thm}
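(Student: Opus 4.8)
The plan is to prove the three assertions — the divisor of $r_\nu$, the indices $\ind_j r_\nu$, and the integral formula \eqref{3.18} — in that order.

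\emph{Divisor.} Write $h=\tau/[(\xi+i)(\xi-ik_0)]$, so $r_\nu=\nu+h$. A count of orders at the branch points and at the two points at infinity — $\tau$ has simple zeros at $\pm i,\pm ik_0$ and double poles at infinity, while $(\xi+i)(\xi-ik_0)$ has double zeros at the branch points $-i$ and $ik_0$ and double poles at infinity — shows that $h$, and hence $r_\nu$, has exactly two poles, both simple, at $-i$ and $ik_0$, is otherwise holomorphic on $\Sigma$, and tends to $\pm1$ at the two points at infinity. Thus $r_\nu\in\mathcal R(\Sigma)$ has degree $2$, hence exactly two zeros counted with multiplicity. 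From $r_\nu=0$ we get $h=-\nu$, so $h^2=\nu^2$, i.e.
\begin{equation*}
\frac{(\xi-i)(\xi+ik_0)}{(\xi+i)(\xi-ik_0)}=\nu^2 ,
\end{equation*}
which after clearing denominators reads $(\nu^2-1)\xi^2+i(1-k_0)(\nu^2+1)\xi+k_0(\nu^2-1)=0$. Here $\nu^2\neq1$ (otherwise $z_0=0$, contradicting $z_0\in\mathbb C^-$), so this is a genuine quadratic whose roots have product $k_0$; they are therefore $z_0$ and $k_0z_0^{-1}$. For each root the $\tau$-coordinate of the corresponding zero is $-\nu(\xi+i)(\xi-ik_0)$; at $\xi=z_0$ this equals $\tau_0$ by \eqref{3.15}, and at $\xi=k_0z_0^{-1}$ the identity $(k_0+iz_0)(1-iz_0)=(z_0+i)(z_0-ik_0)$ gives $k_0z_0^{-2}\tau_0$. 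Since $z_0\neq k_0z_0^{-1}$ (equality would force $z_0^2=k_0$, i.e.\ $z_0$ real) and $z_0,k_0z_0^{-1}\notin\{-i,ik_0\}$, both zeros are simple and distinct from the poles. This proves the first part of the statement; in particular $r_\nu$ has no zeros or poles on $\Gamma$ (as $z_0\in\mathbb C^-$, $k_0z_0^{-1}\in\mathbb C^+$), so $\ind_j r_\nu$ is well defined.

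\emph{Indices.} On $\Gamma_1$ one has $\tau=\rho$, with $\rho(\xi)=\sqrt{\mathfrak p(\xi)}>0$ for real $\xi$, while $(\xi+i)(\xi-ik_0)$ has positive real part $\xi^2+k_0$; hence $h|_{\Gamma_1}$ takes values in the open right half-plane, so the closed curve $r_\nu|_{\Gamma_1}$ lies in $\{\operatorname{Re}z>\operatorname{Re}\nu\}$, and if $\operatorname{Re}\nu\geq0$ this half-plane omits the origin and $\ind_1 r_\nu=0$. Symmetrically, on $\Gamma_2$ one has $\tau=-\rho$, so $r_\nu|_{\Gamma_2}$ lies in $\{\operatorname{Re}z<\operatorname{Re}\nu\}$, which omits the origin when $\operatorname{Re}\nu\leq0$, giving $\ind_2 r_\nu=0$. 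On the other hand $r_\nu(r_\nu)_*=\nu^2-h^2$ is $*$-invariant and, identified with a rational function $R$ of $\xi$, equals $(\nu^2-1)(\xi-z_0)(\xi-k_0z_0^{-1})/[(\xi+i)(\xi-ik_0)]$, whose zeros split as $z_0\in\mathbb C^-$, $k_0z_0^{-1}\in\mathbb C^+$ and whose poles split as $-i\in\mathbb C^-$, $ik_0\in\mathbb C^+$; hence $\ind(R|_{\dot{\mathbb R}})=0$, and since $(r_\nu)_*|_{\Gamma_1}=r_\nu|_{\Gamma_2}$ this reads $\ind_1 r_\nu+\ind_2 r_\nu=0$. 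Combining (one of $\operatorname{Re}\nu\geq0$, $\operatorname{Re}\nu\leq0$ always holds), both indices vanish, which is \eqref{3.17}.

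\emph{The integral \eqref{3.18}.} Since under $\sigma$ the form $\tfrac{k_0}{i}\tfrac{d\xi}{\tau}$ corresponds to $dz$ and $\sigma(\gamma_j)=\Gamma_j$,
\begin{equation*}
\frac{k_0}{2\pi}\int_\Gamma\frac{\log r_\nu}{\tau}\,d\xi=\frac{i}{2\pi}\int_{\gamma_1\cup\gamma_2}\log(r_\nu\circ\sigma)\,dz\pmod L .
\end{equation*}
Here $r_\nu\circ\sigma$ is rational with the divisor found above and $\ind_j r_\nu=0$; arguing as in the proof of Abel's theorem (cf.\ the proofs of Theorems \ref{theo3.4} and \ref{lemma3.2}), one fixes a branch of $\phi:=\log(r_\nu\circ\sigma)$ that is single-valued on the fundamental rectangle $\mathcal P$ cut along two arcs joining, respectively, the zero–pole pair of $r_\nu\circ\sigma$ in $\Omega^+$ and the one in $\Omega^-$. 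Cauchy's theorem on the cut rectangle then expresses $\tfrac{i}{2\pi}\int_{\gamma_1\cup\gamma_2}\phi\,dz$, modulo $L$, as $\sum\operatorname{ord}_P(r_\nu)\,A_J(P)$ taken over the zero $P_1$ and the pole of $r_\nu$ lying in $\Sigma^-$, the periods of $d\phi$ along $\mathcal S_1$ and along $\Gamma_2$ that occur being integer multiples of $2\pi i$, the latter equal to $2\pi i\,\ind_2 r_\nu=0$. Now $A_J(P_1)=-K+\beta$ by \eqref{3.14}, while the pole of $r_\nu$ in $\Sigma^-$ is the branch point $-i$, for which $A_J(-i)=-K$ (equivalently $\sigma(-K)=(-i,0)$) — this being exactly why $\beta=0$ must be excluded, since then $(z_0,\tau_0)=\sigma(-K)=(-i,0)$ and $\nu$ becomes infinite. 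Substituting, the right-hand side collapses to $(-K+\beta)-(-K)=\beta$ modulo $L$, which is \eqref{3.18}.

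I expect the last step to be the real work: pinning down the branch of $\phi$ on the cut rectangle, accounting for the jump contributions across the two cuts and the signs when $\oint_{\partial\mathcal P}\phi\,dz$ (or $\oint_{\partial\Omega^+}\phi\,dz$) is reduced to the integrals over $\gamma_1$ and $\gamma_2$, and checking that every auxiliary term is a lattice vector, so that the answer is exactly $\beta\pmod L$.
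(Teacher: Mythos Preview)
Your proof is correct. The divisor computation is complete and more detailed than the paper's ``can be easily checked''; the integral formula \eqref{3.18} you handle in essentially the same way as the paper, which likewise only says that the argument of Theorem~\ref{lemma3.2} carries over once one shifts the fundamental rectangle so that no branch point sits on its boundary. Your honest disclaimer about the remaining bookkeeping in that step is fair and matches the paper's own level of detail.

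Where you genuinely diverge is in the computation of the indices \eqref{3.17}. The paper repeats the Abel-type period argument of Theorem~\ref{theo3.4}: from the divisor one reads off $\sum_P D_{r_\nu}(\sigma(p))\,p=0$, hence the $\Gamma_2$-period and the $\mathcal{S}_1$-period of $dr_\nu/r_\nu$ both vanish, and then the residue theorem fixes each $\ind_j r_\nu$ separately. Your route is more elementary and self-contained: you first observe that on $\Gamma_1$ the function $h=\tau/[(\xi+i)(\xi-ik_0)]$ has strictly positive real part (and strictly negative on $\Gamma_2$), so a half-plane argument kills one of the two indices depending on the sign of $\operatorname{Re}\nu$; then you use the $*$-invariant product $r_\nu(r_\nu)_*$, identified with a rational function on $\dot{\mathbb R}$ whose zeros and poles split evenly between $\mathbb C^+$ and $\mathbb C^-$, to get $\ind_1 r_\nu+\ind_2 r_\nu=0$. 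The paper's approach is uniform---it works for any rational function once the divisor is located in $z$-coordinates---while yours exploits the specific structure of $r_\nu$ but avoids invoking Abel's theorem and is arguably more transparent for this particular function.
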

\begin{proof}
The first part of the theorem can be easily checked; (\ref{3.17}) can be verified using the same reasoning as in the proof of Theorem \ref{theo3.4}, since \linebreak$D_{r_{\nu}}=$ div$(r_{\nu})$ is given by
\begin{equation*}
  D_{r_{\nu}}(P)=\left\{\begin{array}{ccc}
                    1 & \text{ if } & P=\sigma (-K+\beta) \text{ or } P=\sigma (K+iK'-\beta)\\ \\
                    -1 & \text{ if } & \hspace{-1.1cm}P=\sigma (-K) \text{ or } P=\sigma (K+iK') \ ; \\
                  \end{array}\right.
\end{equation*}
finally (\ref{3.18}) can be obtained using the same reasoning as in the proof of
Theorem \ref{lemma3.2} taking (\ref{3.17}) into account and considering an appropriate polygon representation for the torus $\mathbb{C}\setminus L$, such that branch points do not lie on the boundary of the rectangle.
\end{proof}
We remark, for future convenience, that for $r_{0}(\xi, \tau)=\tau/[(\xi+i)(\xi-ik_{0})]$
we have $z_{0}=-ik_{0}$, $k_{0}z_{0}^{-1}=i$ and $\beta=iK'$.

An important property of the rational functions $r_{\nu}$ can be easily verified:
\begin{equation}
 \left[r_{\nu}(r_{\nu})_*\right](\xi,\tau)=(\nu^2-1)\frac{(\xi-z_{0})(\xi-\frac{k_{0}}{z_{0}})}{(\xi+i)(\xi-ik_{0})}=\nu^2-\frac{(\xi-i)(\xi+ik_{0})}{(\xi+i)(\xi-ik_{0})}.
\label{3.18A}
\end{equation}
\section{Holomorphic $\Sigma$-factorization}
A \emph{(holomorphic) $\Sigma$-factorization} of $f\in C_{\mu}(\Gamma)$ relative to $\Gamma$ is a representation of $f$ in the form
\begin{equation}
        f=f_{-}rf_{+}
\label{4.1}
\end{equation}
where $f_{\pm}\in \mathcal{G}C_{\mu}^{\pm}(\Gamma)$ and $r \in \mathcal{R}(\Sigma)$. If $r$ in (\ref{4.1}) is a constant, which can be assumed without loss of generality to be equal to 1, then (\ref{4.1}) is called a \emph{special $\Sigma$-factorization }(\cite{CSS}).

It is easy to see that for $f$ to admit a holomorphic $\Sigma$-factorization it is necessary that $f \in \mathcal{G}C_{\mu}(\Gamma)$ and that, in two special $\Sigma$-factorizations of the same function $f$, the corresponding factors must be constant multiples of each other, i.e., if $f=f_{-}f_{+}$ and $f=\tilde{f}_{-}\tilde{f}_{+}$ are special $\Sigma$-factorizations, then $\tilde{f}_{\pm}=cf_{\pm}$, with $c \in \mathbb{C}\backslash\{0\}$.

We will assume  in what follows that $f \in
\mathcal{G}C_{\mu}(\Gamma)$ and omit mentioning the contour $\Gamma$
when referring to a representation (\ref{4.1}). Moreover we start by
assuming, in the results that follows, that
$\ind_{1}f=\ind_{2}f=0$, in which case we have $\log f \in C_{\mu}(\Gamma)$ and it is known that
$f$ always admits a holomorphic $\Sigma$-factorization
(\cite{CSS}).
\begin{thm}\label{theo4.1}
Let $f \in \mathcal{G}C_{\mu}(\Gamma)$ be such that
\begin{equation}
        \ind_{1}f=\ind_{2}f=0.
\label{4.2}
\end{equation}
Then
\begin{itemize}
  \item[(i)] $f$ has a holomorphic $\Sigma$-factorization (\ref{4.1});
  \item[(ii)] $f$ admits a special $\Sigma$-factorization iff
  \begin{equation}
        \frac{k_{0}}{2\pi}\int_{\Gamma}\frac{\log f}{\tau}d\xi=4nK+2imK', \quad \text{with }m,n \in \mathbb{Z};
\label{4.3}
\end{equation}
  \item[(iii)] if (\ref{4.3}) holds, then a special $\Sigma$-factorization for $f$ is $f=f_{-}f_{+}$ with
  \begin{equation}
        f_{\pm}=\exp \left(\tilde{P}_{\Gamma}^{\pm}(\Log f)\right).
\label{4.4}
\end{equation}where \begin{equation}
        \Log f:=\log f -2im\pi.
\label{4.5}
\end{equation}
\end{itemize}
\end{thm}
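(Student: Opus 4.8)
The plan is to reduce the $\Sigma$-factorization problem to a scalar Riemann-Hilbert problem on $\Gamma$ for $\log f$, solved by means of the operators $\widetilde{P}^{\pm}_{\Gamma}$ introduced in Section 2, and then to identify precisely the obstruction to choosing the middle factor $r$ to be constant. Since $\ind_1 f = \ind_2 f = 0$ by (\ref{4.2}), the function $\log f$ is well defined and belongs to $C_{\mu}(\Gamma)$ (choosing continuous branches on $\Gamma_1$ and on $\Gamma_2 \setminus \{\mathbf{\infty_2}\}$ as in the paragraph preceding Theorem \ref{lemma3.2}); this is exactly the known fact quoted for existence of a holomorphic $\Sigma$-factorization from \cite{CSS}, which gives part (i) directly. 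The real content is in (ii) and (iii).

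For part (iii), assuming (\ref{4.3}) holds, set $m \in \mathbb{Z}$ as in (\ref{4.3}) and put $\Log f = \log f - 2im\pi$ as in (\ref{4.5}); note $\Log f \in C_{\mu}(\Gamma)$ still, since we have only subtracted a constant. I would then apply the decomposition $\Log f = \widetilde{P}^{+}_{\Gamma}(\Log f) + \widetilde{P}^{-}_{\Gamma}(\Log f)$. The point of using $\widetilde{P}^{\pm}_{\Gamma}$ rather than $P^{\pm}_{\Gamma}$ is that $\widetilde{P}^{\pm}_{\Gamma}g$ has an analytic extension to $\Sigma^{\pm} \setminus \mathcal{A}$ with jump $\alpha_g$ across $\mathcal{A}$; so if the jump $\alpha_{\Log f}$ vanishes, then $\widetilde{P}^{\pm}_{\Gamma}(\Log f) \in C^{\pm}_{\mu}(\Gamma)$ honestly, and defining $f_{\pm} = \exp\bigl(\widetilde{P}^{\pm}_{\Gamma}(\Log f)\bigr)$ gives $f_{\pm}^{\pm 1} \in C^{\pm}_{\mu}(\Gamma)$ and $f_{-}f_{+} = \exp(\Log f) = e^{-2im\pi} f = f$, a special $\Sigma$-factorization. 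The computation $\alpha_{\Log f} = 0$ is where (\ref{4.3}) enters: by definition (\ref{2.25}), $\alpha_{\Log f} = \frac{k_0}{4Ki}\int_\Gamma \frac{\Log f}{\tau_0}\, d\xi_0$, and the real part of (\ref{4.3}) forces $\frac{k_0}{2\pi}\int_\Gamma \frac{\log f}{\tau} d\xi = 4nK$ modulo the lattice — but one must be careful: (\ref{4.3}) is an equality of complex numbers (not mod $L$), and the subtraction of $2im\pi$ in $\Log f$ changes $\int_\Gamma \frac{(\cdot)}{\tau} d\xi$ by $-2im\pi \int_\Gamma \frac{d\xi}{\tau}$. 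Using $\int_\Gamma \frac{d\xi}{\tau} = 2\int_{\Gamma_1}\frac{d\xi}{\tau} = 4K'/k_0$ (from Section 2, since $\Gamma = \Gamma_1 \cup \Gamma_2$ and the contributions on the two sheets coincide up to sign-bookkeeping), one checks $\frac{k_0}{2\pi}\int_\Gamma \frac{\Log f}{\tau} d\xi = 4nK + 2imK' - 2im\pi \cdot \frac{2K'}{\pi} \cdot \frac{k_0}{2\pi}\cdot\frac{2\pi}{k_0}$; I would need to track the constants so that the $K'$-part cancels, leaving $\alpha_{\Log f}$ proportional to $4nK$, which is a real multiple of $K$ — and then a further argument (or a better normalization of the constant $m$) is needed to see the resulting $\widetilde{P}^{\pm}_{\Gamma}$-jump actually vanishes. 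The honest statement is: $\alpha_g$ depends only on $\int_\Gamma g/\tau\, d\xi$, and one arranges the $4nK$ term to be absorbed harmlessly; this constant-chasing is the main obstacle and the place most likely to hide a subtlety about which period of which differential is being integrated.

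For part (ii), the "if" direction is (iii). For "only if", suppose $f = f_- f_+$ is a special $\Sigma$-factorization; then on $\Gamma$, $\log f = \log f_- + \log f_+$ up to an additive integer multiple of $2\pi i$, and $\log f_{\pm}$ extend analytically (as multivalued functions with period-integrals in $2\pi i\mathbb{Z}$) to $\Sigma^{\pm}$. I would then compute $\frac{k_0}{2\pi}\int_\Gamma \frac{\log f}{\tau}\, d\xi$ by splitting it as the sum of the two analogous integrals for $\log f_{\pm}$, and argue — exactly as in the proofs of Theorems \ref{theo3.4} and \ref{lemma3.2}, i.e. by a polygon/period argument on $\mathcal{P}$ using Abel's theorem and the fact that $\mathcal{A}$-periods and $\Gamma_j$-periods of $d(\log f_{\pm})/(\cdot)$ are integer multiples of $2\pi i$ — that each such integral lies in $L = \mathbb{Z}\cdot 4K + \mathbb{Z}\cdot 2iK'$. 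Summing, $\frac{k_0}{2\pi}\int_\Gamma \frac{\log f}{\tau} d\xi \in L$, which is precisely (\ref{4.3}) read modulo the ambiguity of the branch of $\log f$ (an integer multiple of $2\pi i$, whose contribution $2\pi i \cdot \frac{k_0}{2\pi}\cdot\frac{2K'}{k_0} = 2iK'$ per unit is already inside $L$). Thus (\ref{4.3}) holds. The only delicate point here is confirming that the period integral of $\frac{\log f_{\pm}}{\tau}\, d\xi$ over $\Gamma$ — not of $d(\log f_\pm)$ itself — lands in the lattice; this is handled by integration by parts combined with the period computations $\int_{\mathcal{A}}(d\xi/\tau) = 4iK/k_0$ and $\int_{\Gamma_1}(d\xi/\tau) = 2K'/k_0$ recorded at the end of Section 2.2, which is why those normalizations were set up in advance.
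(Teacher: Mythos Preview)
Your overall strategy for (iii) matches the paper's: decompose $\Log f$ via $\widetilde P^{\pm}_\Gamma$ and exponentiate. However, there is a genuine gap in your execution, and it is the very point you flag as the ``main obstacle''.

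You aim to show that $\alpha_{\Log f}=0$ so that $\widetilde P^{\pm}_\Gamma(\Log f)$ are honestly in $C^{\pm}_\mu(\Gamma)$. This is not what happens, and it is not what is needed. First, your period computation is off: the contributions of $\Gamma_1$ and $\Gamma_2$ to $\int_\Gamma d\xi/\tau$ have opposite signs (apply the involution $*$, or use the parametrizations $\gamma_1=[iK',-iK']$, $\gamma_2=[2K-iK',2K+iK']$ from Section~2), so $\int_\Gamma d\xi/\tau=0$. Consequently, subtracting the \emph{same} constant $2im\pi$ on both components of $\Gamma$ does not change $\alpha$ at all; what actually kills the $2imK'$ term in $\beta_f$ is a change of branch of $\log f$ on \emph{one} component, which is the correct reading of the normalization (\ref{4.5}). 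Second, and more importantly, the resulting jump is
\[
\alpha_{\Log f}=\frac{k_0}{4Ki}\int_\Gamma\frac{\Log f}{\tau}\,d\xi=\frac{\pi}{2Ki}\cdot 4nK=-2in\pi,
\]
which is not zero but lies in $2\pi i\mathbb{Z}$. That is exactly enough: $\widetilde P^{\pm}_\Gamma(\Log f)$ has a constant jump $\mp 2in\pi$ across $\mathcal A$, so after exponentiation the jump becomes $e^{\mp 2in\pi}=1$, and $f_\pm=\exp\widetilde P^{\pm}_\Gamma(\Log f)\in\mathcal G C^{\pm}_\mu(\Gamma)$. This is the missing idea in your argument; once you see it, the ``constant-chasing'' you were worried about disappears entirely.

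A minor remark: the paper does not prove (i) and (ii) here but cites \cite{CSS}; your sketch of the ``only if'' direction of (ii) via a period argument is reasonable in spirit but goes beyond what the paper itself establishes in this proof.
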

\begin{proof}
(i) and (ii) were proved in \cite{CSS} and are stated here for the sake of self-containedness. As to (iii), taking (\ref{4.2}) into account, we can assume that $\log f$, as well as $\Log f$,
 are in $C_{\mu}(\Gamma)$. The jump of $\tilde{P}_{\Gamma}^{\pm}(\Log f)$ across $\mathcal{A}$ (see the paragraph before the last in section 2) is
\begin{equation*}
        \alpha_{\Log f}=\frac{k_{0}}{4Ki}\int_{\Gamma}\frac{\Log f}{\tau}d\xi=-2in\pi\ ,
\end{equation*}therefore $\exp \tilde{P}_{\Gamma}^{\pm}(\Log  f ) \in \mathcal{G}C_{\mu}^{\pm}(\Gamma)$.
On the other hand, $\Log f=\tilde{P}_{\Gamma}^{+}(\Log f)+\tilde{P}_{\Gamma}^{-}(\Log f)$
so that the factorization $f=f_{-}f_{+}$ follows, with $f_{\pm}$ defined by (\ref{4.4}) and $f_{\pm} \in \mathcal{G}C_{\mu}^{\pm}(\Gamma)$.
\end{proof}
We remark that (\ref{4.4}) provides a much simpler expression than that given in \cite{CSS} for a special $\Sigma$-factorization of $f$ satisfying the assumptions of Theorem \nolinebreak \ref{theo4.1}; not withstanding the differences in their expressions, the factors $f_{\pm}$ can only differ by a non-zero constant multiplicative factor.

The previous result also suggests that the value of
\begin{equation}
        \frac{k_{0}}{2\pi}\int_{\Gamma}\frac{\log f}{\tau}d\xi=:\beta_{f}.
        \label{4.6}
\end{equation}
gives some relevant information on the structure of the holomorphic $\Sigma$-factorization of $f$, by stating in (ii) that a special $\Sigma$-factorization of $f$ exists if and only if $\beta_{f}=0$ $\mod L$. The following theorem shows that indeed the rational middle factor $r$ in (\ref{4.1}) can be expressed as a power of exponent $N\leq 3$ of a rational function $r_{\nu}$ of the form (\ref{3.16}), where both $\nu$ and $N$ are determined by $\beta_{f}$. In particular, it is possible to establish conditions for $r$ to be a (non-constant) rational function of the simplest form (with two simple zeros and two simple poles), in the case where a special $\Sigma$-factorization does not exist.
\begin{thm}\label{theo.4.2}
Let $f\in \mathcal{G}C_{\mu}(\Gamma)$ be such that $\ind_{j}f=0$, for $j=1,2$, and let $\beta_{f}$ be defined by (\ref{4.6}) for some branch of the logarithm such that $\log f \in C_{\mu}(\Gamma)$. Then a holomorphic $\Sigma$-factorization for $f$ is given by
\begin{equation}
        f=f_{-}r_{\nu}^kf_{+}
\label{4.7}
\end{equation}
where, for $\tilde{\beta}_f \in \tilde{\mathcal{P}}$ such that $\beta_{f}=\tilde{\beta}_f \mod L$,
\begin{itemize}
  \item[(i)]
  \begin{itemize}
    \item[] $k=0$  if  $\tilde{\beta}_f=0$;
    \item[] $k=1$ if  $\tilde{\beta}_f \in \mathcal{P}_{1}\backslash\{0\}$  (see (\ref{3.13}));
    \item[] $k=2$ if  $\tilde{\beta}_f \in \tilde{\mathcal{P}}\backslash\mathcal{P}_{1},$ Re $\tilde{\beta}_f<2K$;
    \item[] $k=3$  if  Re $\tilde{\beta}_f=2K$;
  \end{itemize}
\item[(ii)] for $k\neq 0$, $r_{\nu}$ is given by (\ref{3.16})-(\ref{3.14}) with  $\beta=\tilde{\beta}_f/k$; for $k=0$, $r_{\nu}=1$ by convention;
  \item[(iii)] $f_{\pm}=\exp \tilde{P}^{\pm}_{\Gamma}(\Log (r^{-k}_{\nu}f))$.
\end{itemize}
\end{thm}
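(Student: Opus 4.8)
The plan is to reduce everything to Theorem \ref{theo4.1} applied to the function $g:=r_{\nu}^{-k}f$, with $k$ and $\nu$ chosen as in (i)--(ii): the prescription for $k$ is exactly what makes $\tilde{\beta}_{f}/k$ fall in $\mathcal{P}_{1}\setminus\{0\}$, and subtracting from $\log f$ the quantity $k\log r_{\nu}$ kills $\beta_{f}$ modulo $L$. I would begin with the elementary geometric check that $\beta:=\tilde{\beta}_{f}/k\in\mathcal{P}_{1}\setminus\{0\}$ whenever $k\neq 0$. Since $\tilde{\beta}_{f}\in\tilde{\mathcal{P}}$, we have $\operatorname{Re}\tilde{\beta}_{f}\in\,]-2K,2K]$ and $\operatorname{Im}\tilde{\beta}_{f}\in\,]-K',K']$; division by $k\geq 1$ keeps the imaginary part in $]-K',K']$ and, since $\tilde{\beta}_{f}\neq 0$ in these cases, keeps $\beta\neq 0$. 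For $k=1$, $\beta=\tilde{\beta}_{f}\in\mathcal{P}_{1}\setminus\{0\}$ by hypothesis; for $k=2$, which occurs exactly when $\operatorname{Re}\tilde{\beta}_{f}\in\,]-2K,-K]\cup[K,2K[$, we get $\operatorname{Re}\beta\in\,]-K,-K/2]\cup[K/2,K[\,\subset\,]-K,K[$; for $k=3$, which occurs when $\operatorname{Re}\tilde{\beta}_{f}=2K$, we get $\operatorname{Re}\beta=2K/3\in\,]-K,K[$. (The four cases in (i) are clearly mutually exclusive and exhaust $\tilde{\mathcal{P}}$.) Thus $r_{\nu}$ in (ii) is a well-defined rational function and, by Theorem \ref{theo3.5}, $\ind_{1}r_{\nu}=\ind_{2}r_{\nu}=0$ and $\frac{k_{0}}{2\pi}\int_{\Gamma}\frac{\log r_{\nu}}{\tau}\,d\xi\equiv\beta\pmod L$; since the zeros and poles of $r_{\nu}$ lie at the branch points $-i,ik_{0}$ and at points projecting to $z_{0}\in\mathbb{C}^{-}$ and $k_{0}/z_{0}\in\mathbb{C}^{+}$, hence off $\Gamma$, we have $r_{\nu}\in\mathcal{R}(\Sigma)$ (and so does $r_{\nu}^{k}$), with $r_{\nu}|_{\Gamma}\in\mathcal{G}C_{\mu}(\Gamma)$.

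Next I would check that $g=r_{\nu}^{-k}f$ satisfies the hypotheses of Theorem \ref{theo4.1}. It lies in $\mathcal{G}C_{\mu}(\Gamma)$ because $f$ and $r_{\nu}|_{\Gamma}$ do, and by additivity of the winding number on each component $\Gamma_{j}$ we get $\ind_{j}g=-k\,\ind_{j}r_{\nu}+\ind_{j}f=0$ for $j=1,2$; in particular $\log g\in C_{\mu}(\Gamma)$. Writing $\log g=-k\log r_{\nu}+\log f$ on $\Gamma$, linearity of the integral together with (\ref{3.18}) and (\ref{4.6}) gives $\beta_{g}:=\frac{k_{0}}{2\pi}\int_{\Gamma}\frac{\log g}{\tau}\,d\xi\equiv\beta_{f}-k\beta\equiv\tilde{\beta}_{f}-\tilde{\beta}_{f}=0\pmod L$, i.e. $\beta_{g}=4nK+2imK'$ for some $n,m\in\mathbb{Z}$, which is condition (\ref{4.3}) for $g$. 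Theorem \ref{theo4.1}(ii)--(iii) then yields a special $\Sigma$-factorization $g=g_{-}g_{+}$ with $g_{\pm}=\exp\bigl(\tilde{P}^{\pm}_{\Gamma}(\Log g)\bigr)$, $\Log g=\log g-2im\pi$. Since scalar functions commute, $f=r_{\nu}^{k}g=r_{\nu}^{k}g_{-}g_{+}=g_{-}\,r_{\nu}^{k}\,g_{+}$, which is precisely the holomorphic $\Sigma$-factorization (\ref{4.7}), with $f_{\pm}=g_{\pm}=\exp\bigl(\tilde{P}^{\pm}_{\Gamma}(\Log(r_{\nu}^{-k}f))\bigr)$; this proves (iii). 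The case $k=0$ (i.e. $\tilde{\beta}_{f}=0$) is Theorem \ref{theo4.1} itself, with $r_{\nu}=1$ by convention.

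I expect the only mildly delicate points to be the case analysis in the first step --- verifying that the four cases on $\tilde{\mathcal{P}}$ are exhaustive and that $\tilde{\beta}_{f}/k\in\mathcal{P}_{1}\setminus\{0\}$, which simultaneously guarantees that $r_{\nu}$ is available from Theorem \ref{theo3.5} and that $r_{\nu}\in\mathcal{R}(\Sigma)$ --- and the bookkeeping with the branch of the logarithm, in particular the shift by $2im\pi$, when transporting Theorem \ref{theo4.1} from $g$ back to $f$. Everything else reduces to additivity of indices and linearity of the period integrals.
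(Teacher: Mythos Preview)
Your approach is essentially the same as the paper's: set $g=r_{\nu}^{-k}f$ and apply Theorem~\ref{theo4.1}(ii)--(iii), using Theorem~\ref{theo3.5} to ensure $\ind_{j}g=0$ and $\beta_{g}\equiv 0\pmod L$. The paper's proof is a terse two-sentence version of exactly this reduction; your proposal simply makes explicit the geometric check that $\tilde{\beta}_{f}/k\in\mathcal{P}_{1}\setminus\{0\}$ in each of the three nonzero cases, which the paper leaves implicit.
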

\begin{proof}
From Theorem \ref{theo4.1}, if $\beta_{f}=0 \mod L$ then $f$ admits a special $\Sigma$-factorization, with $f_{\pm}$ given by the equality in (iii) with $r_{\nu}^{-k}=1$. In all other cases,$\tilde{f}=r_{\nu}^{-k}f$ is such that $\ind_{j}\tilde{f}=0$ for $j=1,2$ and we see from Theorem \ref{theo3.5} that it admits a special $\Sigma$-factorization with $\tilde{f}_{\pm}=f_{\pm}$ given in (iii).
\end{proof}
In particular we have the following, which will later be used.
\begin{cor}\label{cor4.3}
With the same assumptions as in Theorem \ref{theo.4.2}, $f$ admits a holomorphic $\Sigma$-factorization $f=f_{-}\tau/[(\xi+i)(\xi-ik_{0})]f_{+}$
if and only $\beta_{f}=iK' \mod L$.
\end{cor}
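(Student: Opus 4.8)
The plan is to reduce the assertion to the equivalence in Theorem~\ref{theo4.1}(ii) by extracting the prescribed middle factor. The key preliminary remark is the one recorded just after Theorem~\ref{theo3.5}: the function $r_{0}(\xi,\tau)=\tau/[(\xi+i)(\xi-ik_{0})]$ is precisely the rational function $r_{\nu}$ of (\ref{3.16}) corresponding to $\nu=0$, and for it one has $z_{0}=-ik_{0}$, $k_{0}z_{0}^{-1}=i$ and $\beta=iK'$. Hence Theorem~\ref{theo3.5} applies and gives at once $\ind_{1}r_{0}=\ind_{2}r_{0}=0$ and $\frac{k_{0}}{2\pi}\int_{\Gamma}\frac{\log r_{0}}{\tau}\,d\xi=iK'\ \mod L$. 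I would also note that $r_{0}\in\mathcal{R}(\Sigma)$ has no zeros or poles on $\Gamma$ (its zeros are the branch points $i,-ik_{0}$, its poles the branch points $-i,ik_{0}$, none of which project onto $\dot{\mathbb{R}}$), so that $r_{0}^{-1}f\in\mathcal{G}C_{\mu}(\Gamma)$ and $\ind_{j}(r_{0}^{-1}f)=\ind_{j}f=0$ for $j=1,2$ whenever $f$ satisfies the hypotheses of Theorem~\ref{theo.4.2}.

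Set $\tilde f:=r_{0}^{-1}f$. Since $\ind_{j}\tilde f=0$, a branch of $\log\tilde f$ lies in $C_{\mu}(\Gamma)$, and additivity of $\frac{k_{0}}{2\pi}\int_{\Gamma}\frac{\log(\cdot)}{\tau}\,d\xi$ on products of index-zero functions gives $\beta_{\tilde f}\equiv\beta_{f}-\frac{k_{0}}{2\pi}\int_{\Gamma}\frac{\log r_{0}}{\tau}\,d\xi\equiv\beta_{f}-iK'\ \mod L$. I would then record the elementary fact that condition (\ref{4.3}) for $\tilde f$, namely $\beta_{\tilde f}=4nK+2imK'$ with $m,n\in\mathbb{Z}$, is exactly the statement $\beta_{\tilde f}\equiv 0\ \mod L$, because $L=\mathbb{Z}\cdot 4K+\mathbb{Z}\cdot 2iK'$.

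With these two reductions, both implications are immediate consequences of Theorem~\ref{theo4.1} applied to $\tilde f$. If $f=f_{-}r_{0}f_{+}$ is a holomorphic $\Sigma$-factorization, then $\tilde f=f_{-}f_{+}$ is a special $\Sigma$-factorization of $\tilde f$, so by Theorem~\ref{theo4.1}(ii) condition (\ref{4.3}) holds for $\tilde f$, i.e. $\beta_{\tilde f}\equiv 0\ \mod L$, whence $\beta_{f}\equiv iK'\ \mod L$. Conversely, if $\beta_{f}\equiv iK'\ \mod L$, then $\beta_{\tilde f}\equiv 0\ \mod L$, so Theorem~\ref{theo4.1}(ii)--(iii) yields a special $\Sigma$-factorization $\tilde f=f_{-}f_{+}$ with $f_{\pm}=\exp\tilde P^{\pm}_{\Gamma}(\Log\tilde f)\in\mathcal{G}C^{\pm}_{\mu}(\Gamma)$, and multiplying by $r_{0}$ gives $f=f_{-}r_{0}f_{+}$, a holomorphic $\Sigma$-factorization with exactly the required middle factor. (Alternatively, the ``if'' direction can be read off Theorem~\ref{theo.4.2}: $iK'\in\mathcal{P}_{1}\setminus\{0\}$ forces $k=1$ and $\beta=\tilde\beta_{f}/k=iK'$, for which $r_{\nu}=r_{0}$.)

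I do not expect any genuine analytic difficulty; the work is entirely bookkeeping, and the only points that need care are: (i) correctly identifying $r_{0}$ with the specialization $\nu=0$, $\beta=iK'$ of $r_{\nu}$, so that \emph{both} conclusions of Theorem~\ref{theo3.5}---the index statement (\ref{3.17}) and the period statement (\ref{3.18})---become available; and (ii) tracking the congruence $\beta_{\tilde f}\equiv\beta_{f}-iK'\ \mod L$ together with the reformulation of (\ref{4.3}) as ``$\equiv 0\ \mod L$''. One should also confirm that the branch ambiguity in $\beta_{f}$ is absorbed by $L$ (changing the branch of $\log f$ alters $\beta_{f}$ by integer multiples of $2iK'$, using $\int_{\Gamma_{1}}d\xi/\tau=2K'/k_{0}$ and the corresponding value on $\Gamma_{2}$), so that every congruence above is meaningful.
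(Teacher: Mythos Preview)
Your proposal is correct and follows essentially the same route the paper has in mind: the corollary is stated without proof, and is meant to be immediate from the remark after Theorem~\ref{theo3.5} (that $r_{0}$ is the specialization $\nu=0$, $\beta=iK'$ of $r_{\nu}$) together with the equivalence in Theorem~\ref{theo4.1}(ii), exactly as you lay out. Your explicit treatment of the ``only if'' direction via $\tilde f=r_{0}^{-1}f$ and Theorem~\ref{theo4.1}(ii) is in fact necessary, since Theorem~\ref{theo.4.2} as stated only yields the ``if'' direction.
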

Using the results of section 3 we can now extend the previous results to any $f \in \mathcal{G}C_{\mu}(\Gamma)$. In what follows we use the notation $n_{j}=\ind_{j}f, \quad j=1,2,$
and we define $\ind_{\Gamma}f=\ind_{1}f+\ind_{2}f$
for any $f \in \mathcal{G}C_{\mu}(\Gamma)$.
\begin{thm}
Every $\tilde{f}\in \mathcal{G}C_{\mu}(\Gamma)$  admits a holomorphic $\Sigma$-factorization.
\end{thm}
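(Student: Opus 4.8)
The goal is to remove the hypothesis $\ind_1 f=\ind_2 f=0$ and reduce the general case to Theorem \ref{theo.4.2}. The plan is to find, for each pair $(n_1,n_2)=(\ind_1 \tilde f,\ind_2\tilde f)\in\mathbb Z^2$, an auxiliary factor $g\in\mathcal R(\Sigma)$ (or more generally a product of the explicit functions constructed in section 3) whose partial indices are $\ind_1 g=n_1$, $\ind_2 g=n_2$, and which is already displayed in a factored form $g=g_- r\, g_+$ with $g_\pm\in\mathcal G C_\mu^\pm(\Gamma)$ and $r\in\mathcal R(\Sigma)$. Then $f:=g^{-1}\tilde f$ satisfies $\ind_1 f=\ind_2 f=0$, so by Theorem \ref{theo.4.2} it has a holomorphic $\Sigma$-factorization $f=f_- r_\nu^{k} f_+$; substituting gives $\tilde f = (g_- f_-)\, (\text{rational middle})\, (f_+ g_+)$ once one checks the two rational pieces $r$ and $r_\nu^k$ combine, together with the outer factors being absorbed correctly, into a single element of $\mathcal R(\Sigma)$ flanked by invertible $C_\mu^\pm(\Gamma)$ factors. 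This is exactly the standard device by which one passes from the index-zero case to the general case of a Wiener--Hopf-type factorization.

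**Construction of the index-shifting factor.** The building blocks are already at hand. By Theorem \ref{theo3.4}, the rational function $S$ has $(\ind_1 S,\ind_2 S)=(-1,0)$; by Theorem \ref{theo3.6} and Corollary \ref{cor3.7}, $\alpha_+$ has $(\ind_1\alpha_+,\ind_2\alpha_+)=(0,-1)$, $(\alpha_+)_*$ has $(-1,0)$, $\alpha_-$ has $(0,1)$, $(\alpha_-)_*$ has $(1,0)$, while $\alpha_+^{-1}(\alpha_+)_*\in\mathcal G C_\mu^+(\Gamma)$ has indices $(-1,1)$ and $\alpha_-^{-1}(\alpha_-)_*\in\mathcal G C_\mu^-(\Gamma)$ has indices $(1,-1)$. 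Since the partial index is additive under products, the vectors $(0,-1)$ and $(1,0)$ — or equivalently $(\ind_j S)$ together with $(\ind_j\alpha_-)$ — span $\mathbb Z^2$, so a suitable product of integer powers of $S$, $\alpha_\pm$ and $(\alpha_\pm)_*$ realizes any prescribed $(n_1,n_2)$. Choosing the powers of $\alpha_-$ and $\alpha_+$ (meromorphic in $\Sigma^-$ and $\Sigma^+$ respectively, hence contributing only to the $\mathcal R(\Sigma)$ piece together with controlled poles at the branch points) and of $S$ (genuinely rational) appropriately, one obtains $g$ in the form $g_- r g_+$ explicitly. The cleanest bookkeeping is: first use powers of $\alpha_\pm^{-1}(\alpha_\pm)_*$, which are already invertible in $C_\mu^\pm(\Gamma)$ and shift $(n_1,n_2)$ along the direction $(1,-1)$ without touching $\mathcal R(\Sigma)$, to reduce to the case $n_1=0$ (or $n_1+n_2=0$); then absorb the remaining diagonal direction with a power of $S$ (rational) and, if needed, a single $\alpha_\pm$ to fix parity/sign — at this stage one can invoke Theorem \ref{theo4.1}(i) for the index-zero remainder rather than re-deriving it.

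**Assembling the factorization and the main obstacle.** After writing $\tilde f = g\cdot f$ with $g$ factored as above and $f$ of index zero, apply Theorem \ref{theo.4.2} to $f$, then collect: $\tilde f = (g_- f_-)\,\big(r\cdot r_\nu^{k}\big)\,(f_+ g_+)$, where one must verify that $g_- f_-\in\mathcal G C_\mu^-(\Gamma)$, $f_+ g_+\in\mathcal G C_\mu^+(\Gamma)$ (immediate, these are algebras closed under the relevant products and inverses) and that $r\cdot r_\nu^{k}\in\mathcal R(\Sigma)$ (immediate, $\mathcal R(\Sigma)$ is a field and neither factor has poles on $\Gamma$). The step requiring genuine care — the main obstacle — is arranging that the meromorphic-in-$\Sigma^\pm$ factors $\alpha_\pm$ used in $g$ really do split, after multiplication by $f$, into an honest $\mathcal R(\Sigma)$ middle term times invertible $C_\mu^\pm(\Gamma)$ factors: $\alpha_+$ has a pole at the branch point $i$ and no zeros in $\Sigma^+$, so $\alpha_+^{\,m}$ for $m>0$ is fine as a $C_\mu^+(\Gamma)$-type factor only up to that branch-point pole, and one must pair it with a compensating rational factor (e.g. a power of $\lambda_+$ or of the relevant coordinate, using the explicit identities \eqref{3.20B}) to land inside $\mathcal R(\Sigma)\cdot\mathcal G C_\mu^+(\Gamma)$. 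Managing these branch-point singularities — precisely the phenomenon, noted in the introduction, that forces a nontrivial middle factor $r$ in $\Sigma$ even when none is needed over $\dot{\mathbb R}$ — is where the bulk of the work lies; once it is handled for each of the finitely many "elementary" index vectors, the general statement follows by the additivity argument above.
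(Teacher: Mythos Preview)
Your overall strategy is exactly the paper's: multiply $\tilde f$ by an explicit auxiliary factor to kill the partial indices, apply Theorem~\ref{theo.4.2} to the resulting index-zero function, and reassemble. The difficulty you flag as the ``main obstacle'', however, is self-inflicted and does not appear in the paper's argument.

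The paper uses precisely three index-shifting building blocks: the $*$-invariant rational function $\lambda_-/\lambda_+$ (indices $(1,1)$), the rational function $S$ of Theorem~\ref{theo3.4} (indices $(-1,0)$), and the combination $\alpha_+^{-1}(\alpha_+)_*$ (indices $(-1,1)$). The crucial point, which you in fact state yourself, is that $\alpha_+^{-1}(\alpha_+)_*\in\mathcal G C_\mu^+(\Gamma)$ by Corollary~\ref{cor3.7}: it is holomorphic and invertible on $\Sigma^+$, so it is absorbed entirely into the plus factor. With these three blocks one sets
\[
f=\tilde f\,(\lambda_-/\lambda_+)^{-\tilde k}\,S^{\,l}\,(\alpha_+^{-1}(\alpha_+)_*)^{m}
\]
for suitable integers $\tilde k,m$ and $l\in\{0,1\}$ (the latter handling the parity of $n_1+n_2$), obtains $\ind_j f=0$, applies Theorem~\ref{theo.4.2}, and reads off $\tilde f_-=f_-$, $r=r_\nu^{\,k}(\lambda_-/\lambda_+)^{\tilde k}S^{-l}\in\mathcal R(\Sigma)$, $\tilde f_+=f_+(\alpha_+^{-1}(\alpha_+)_*)^{-m}\in\mathcal G C_\mu^+(\Gamma)$. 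No branch-point bookkeeping is needed at all.

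Your proposal goes astray when it brings in $\alpha_+$ and $\alpha_-$ individually. These are \emph{not} in $\mathcal R(\Sigma)$: they involve $\rho_\pm$, which are single-valued only on a cut plane, so $\alpha_\pm$ do not extend meromorphically to all of $\Sigma$. They are meromorphic on $\Sigma^\pm$ with a simple pole at a branch point, hence they lie neither in $\mathcal R(\Sigma)$ nor in $\mathcal G C_\mu^\pm(\Gamma)$. Using them forces exactly the compensation problem you describe; that is in fact the mechanism behind the \emph{meromorphic} $\Sigma$-factorization of Section~5, not the holomorphic one. For the present theorem simply drop the individual $\alpha_\pm$: the pair $S$ and $\alpha_+^{-1}(\alpha_+)_*$ already spans $\mathbb Z^2$ in the index lattice (as does the paper's triple with $\lambda_-/\lambda_+$), and the argument collapses to a two-line computation with no obstacle to manage.
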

\begin{proof}
Let $\lambda_{\pm}(\xi)=\xi\pm i$ as in (\ref{3.20C}) and let
\begin{equation*}
        f=\tilde{f}(\frac{\lambda_{-}}{\lambda_{+}})^{-\tilde{k}}S^l(\alpha_{+}^{-1}(\alpha_{+})_*)^m
\label{4.8C}
\end{equation*}
(see Theorem \ref{theo3.4} for $S$ and (\ref{3.19}) for $\alpha_{+}$), where
\begin{equation}
        \tilde{k}=\frac{n_{1}+n_{2}}{2}, \  l=0, \ m=\frac{n_{1}-n_{2}}{2}, \text{ if } \ind_{\Gamma}f \text{ is even},
\label{4.8D}
\end{equation}
\begin{equation}
        \tilde{k}=\frac{n_{1}+n_{2}-1}{2}, \  l=1, \ m=\frac{n_{1}-n_{2}-1}{2}, \text{ if } \ind_{\Gamma}f \text{ is odd}.
\label{4.8E}
\end{equation}
From Theorem \ref{theo3.4} and Corollary \ref{cor3.7}, it follows that $\ind_{j}f=0$, so that $f$ admits a holomorphic $\Sigma$-factorization (\ref{4.7}). Therefore $\tilde{f}$ also admits a holomorphic $\Sigma$-factorization $\tilde{f}=\tilde{f}_{-}r\tilde{f}_{+}$
with
\begin{equation*}
        \tilde{f}_{-}=f_{-}, \quad r=r_{\nu}^k(\frac{\lambda_{-}}{\lambda_{+}})^{\tilde{k}}S^{-l}, \quad \tilde{f}_{+}=f_{+}(\alpha_{+}^{-1}(\alpha_{+})_*)^{-m}.
\label{4.8G}
\end{equation*}
\end{proof}
We remark that the proof of the last theorem provides formulas for the factors in a holomorphic $\Sigma$-factorization of any $\tilde{f} \in \mathcal{G}C_{\mu}(\Gamma)$. Moreover, since for any *-invariant $f \in \mathcal{G}C_{\mu}(\Gamma)$ (which can be identified with a function in $\mathcal{G}C_{\mu}(\dot{\mathbb{R}}))$ we have $\tilde{k}=\ind f$, $l=0$, $ m=0$, taking Theorem \ref{theo4.1} and (\ref{2.26A}) into account we conclude that the holomorphic $\Sigma$-factorization of $f$ coincides with its WH-factorization.
\begin{thm}\label{theo4.5}
Let $\tilde{f} \in \mathcal{G}C_{\mu}(\Gamma)$, with $\ind_{\Gamma}\tilde{f}=0$. Then $\tilde{f}$ admits a special $\Sigma$-factorization if and only if
\begin{equation}
        \beta_{\tilde{f}}=2nK \mod  L
\label{4.8H}\end{equation}
where $n=\ind_{1}\tilde{f}=-\ind_{2}\tilde{f}$. In this case, $\tilde{f}=\tilde{f}_{-}\tilde{f}_{+}$ with $\tilde{f}_{-}=f_{-}, \linebreak \tilde{f}_{+}=f_{+}(\alpha_{+}^{-1}(\alpha_{+})_*)^{-n}$
where $f_{\pm}$ are the factors of a special $\Sigma$-factorization of $f=\tilde{f}(\alpha_{+}^{-1}(\alpha_{+})_*)^{n}$, given by (\ref{4.4}).
\end{thm}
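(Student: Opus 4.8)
The plan is to reduce the statement to the $\ind_j = 0$ case already handled by Theorem~\ref{theo4.1} and Theorem~\ref{theo.4.2}, by first stripping off the non-zero partial indices using the function $\alpha_+^{-1}(\alpha_+)_*$ from Corollary~\ref{cor3.7}. Since $\ind_\Gamma \tilde f = 0$ means $\ind_1 \tilde f = -\ind_2 \tilde f =: n$, and since by \eqref{3.22C} the function $\alpha_+^{-1}(\alpha_+)_*$ has $\ind_1 = -1$, $\ind_2 = 1$, the function $f := \tilde f\,(\alpha_+^{-1}(\alpha_+)_*)^{n}$ satisfies $\ind_1 f = \ind_2 f = 0$. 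Moreover $\alpha_+^{-1}(\alpha_+)_* \in \mathcal{G}C_\mu^+(\Gamma)$, so multiplying $\tilde f$ by a power of it does not change the ``minus'' factor of any $\Sigma$-factorization and alters the ``plus'' factor only by an invertible element of $C_\mu^+(\Gamma)$; hence $\tilde f$ admits a special $\Sigma$-factorization if and only if $f$ does.

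Next I would compute $\beta_f$ in terms of $\beta_{\tilde f}$. By definition \eqref{4.6} and additivity of $\log$ under products, $\beta_f = \beta_{\tilde f} + n\cdot \dfrac{k_0}{2\pi}\displaystyle\int_\Gamma \dfrac{\log(\alpha_+^{-1}(\alpha_+)_*)}{\tau}\,d\xi$, and by \eqref{3.24} the integral on the right equals $2K \bmod L$. Therefore $\beta_f = \beta_{\tilde f} + 2nK \bmod L$. Now apply Theorem~\ref{theo4.1}(ii): $f$ admits a special $\Sigma$-factorization if and only if $\beta_f = 0 \bmod L$, i.e. $\beta_{\tilde f} = -2nK \bmod L$. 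Since $-2nK \equiv 2nK \bmod L$ (indeed $4nK \in L$), this is exactly the condition \eqref{4.8H}, which proves the ``if and only if''.

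For the explicit factors, assume \eqref{4.8H} holds. Then $f = \tilde f\,(\alpha_+^{-1}(\alpha_+)_*)^{n}$ has zero partial indices and $\beta_f = 0 \bmod L$, so by Theorem~\ref{theo4.1}(iii) it has the special $\Sigma$-factorization $f = f_- f_+$ with $f_\pm = \exp\bigl(\tilde P_\Gamma^\pm(\Log f)\bigr)$ as in \eqref{4.4}, \eqref{4.5}. Writing $\tilde f = f\,(\alpha_+^{-1}(\alpha_+)_*)^{-n} = f_-\,f_+(\alpha_+^{-1}(\alpha_+)_*)^{-n}$ and using that $(\alpha_+^{-1}(\alpha_+)_*)^{-n} \in \mathcal{G}C_\mu^+(\Gamma)$, we see that $\tilde f = \tilde f_- \tilde f_+$ with $\tilde f_- = f_-$ and $\tilde f_+ = f_+(\alpha_+^{-1}(\alpha_+)_*)^{-n}$, both factors in $\mathcal{G}C_\mu^\pm(\Gamma)$ respectively. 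This is the claimed special $\Sigma$-factorization.

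The only point requiring a little care — and the step I would treat as the main obstacle — is the bookkeeping of branches of the logarithm, so that $\beta_f = \beta_{\tilde f} + 2nK$ holds with the \emph{same} convention under which $\log f, \log \tilde f \in C_\mu(\Gamma)$; one must check that a branch of $\log(\alpha_+^{-1}(\alpha_+)_*)$ continuous on $\Gamma_1$ and on $\Gamma_2\setminus\{\mathbf\infty_2\}$ can be chosen (as in the paragraph preceding Theorem~\ref{lemma3.2}) and that the additivity $\Log(\tilde f \cdot g^n) = \Log \tilde f + n\Log g$ holds modulo $L$ after pushing through the integral $\tfrac{k_0}{2\pi}\int_\Gamma (\,\cdot\,)/\tau\,d\xi$. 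This is exactly the same type of modular arithmetic used in Theorem~\ref{theo.4.2}, and the congruence $-2nK \equiv 2nK \bmod L$ is what makes the sign of $n$ irrelevant in \eqref{4.8H}.
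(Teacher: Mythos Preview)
Your proof is correct and follows essentially the same route as the paper's: reduce to the zero-index case via multiplication by $(\alpha_+^{-1}(\alpha_+)_*)^n \in \mathcal{G}C_\mu^+(\Gamma)$ (Corollary~\ref{cor3.7}), then invoke Theorem~\ref{theo4.1}(ii) together with \eqref{3.24}. The paper compresses your $\beta$-computation and the observation $-2nK\equiv 2nK \bmod L$ into the single sentence ``the result now follows from \eqref{3.24}'', but the argument is the same; your final paragraph on branch bookkeeping is the only point you elaborate beyond what the paper writes, and it is handled there implicitly by the convention stated just before Theorem~\ref{lemma3.2}.
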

\begin{proof}
From Corollary \ref{cor3.7}, we have $\alpha_{+}^{-1}(\alpha_{+})_* \in \mathcal{G}C_{\mu}^+(\Gamma)$, so that $\tilde{f}$ admits a special $\Sigma$-factorization if and only if $f$ does. On the other hand, from  Corollary \ref{cor3.7}, we have $\ind_{1}f=\ind_{2}f=0$ and, from Theorem \ref{theo4.1}, $f$ admits a special $\Sigma$-factorization if and only if $\beta_{f}=0 \mod L$. The result now follows from (\ref{3.24}).
\end{proof}
\begin{rem} \label{rmk4.6}A generalization of Corollary \ref{cor4.3} can also be obtained following the same reasoning and with the same assumptions as in Theorem \ref{theo4.5}: $\tilde{f}$ admits a factorization of the form $\tilde{f}=\tilde{f}_{-}\tau/[(\xi+i)(\xi-ik_{0})]\tilde{f}_{+}$
(which will be important in the next section) if and only if $\beta_{\tilde{f}}=2nK+iK' \mod L$.
\end{rem}
The usefulness of holomorphic $\Sigma$-factorization in solving Riemann-\linebreak[4]Hilbert problems (relative to $\Gamma$) in $\Sigma$ has been illustrated in examples presented in (\cite{CSS,CSS1}). In particular it has been used to study and solve some boundary value problems which appear in connection with finite-dimensional integrable systems.
The examples presented there, however, make it clear that the difficulty in solving RH-problems, such as those mentioned above, considerably increases with the complexity of the rational function $r$ in (\ref{4.1}). This is particularly true if the RH problem on $\Sigma$ is studied as a step to obtain the explicit factorization of matrix functions, as will be done in section 6.

In this context, the simplest case corresponds naturally to the existence of a special $\Sigma$-factorization for $f$. Otherwise, the simplest case involves two simple zeros and two simple poles in $r$, the other cases involving two zeros and two poles of order $N>1$ (\cite{CSS}).

An alternative approach to $\Sigma$-factorization consists in looking for a factorization of the form (\ref{4.1}), allowing however the outer factors $f_{\pm}$ to have some known zeros or poles, which leads to a factorization involving a smaller number of zeros and poles in $\Sigma^{\pm}$ than in the case of holomorphic outer factors and rational middle factor. This alternative approach is presented in the next section.
\section{Meromorphic $\Sigma$-factorization}
By a \emph{meromorphic $\Sigma$-factorization} (relative to $\Gamma$) of $f \in C_{\mu}(\Gamma)$ we mean a representation of the form
\begin{equation}
        f=m_{f}^-rm_{f}^+
\label{5.1}
\end{equation}
where $(m_{f}^{\pm})^{\pm 1} \in \mathcal{M}(\Sigma^{\pm})$, $r \in \mathcal{R}(\Sigma)$.

It is clear that a necessary condition for existence of a representation (\ref{5.1}) is that $f \in \mathcal{G}C_{\mu}(\Gamma)$. It is also clear that any holomorphic $\Sigma$-factorization is of the above type. If the factors $m_{f}^{\pm}$ or their inverses are not holomorphic in $\Sigma^{\pm}$, respectively, we say that (\ref{5.1}) is a strictly meromorphic $\Sigma$-factorization.

Since, for the same $f \in \mathcal{G}C_{\mu}(\Gamma)$ admitting a meromorphic $\Sigma$-factorization, (infinitely) many such representations are possible, it is important to characterize more precisely the factors on the right-hand side of (\ref{5.1}).

Here we will consider meromorphic $\Sigma$-factorizations allowing the factors $m_{f}^{\pm}$ to be non-holomorphic in $\Sigma^{\pm}$ (respectively) but, in a certain sense, of the simplest non-holomorphic kind, admitting one simple pole and no zeros in $\Sigma^{\pm}$, respectively. In what follows we will thus consider that the meromorphic $\Sigma$-factorization is of the form
\begin{equation}
f=f_{-}^M\alpha_{-}^l(\frac{\lambda_{-}}{\lambda_{+}})^{\tilde{k}}r_{\nu}^kf_{+}^M
\label{5.2}\end{equation}
(so that $m_{f}^-=f_{-}^M\alpha_{-}^l$,  $r=(\lambda_{-}/\lambda_{+})^{\tilde{k}}r_{\nu}^k$, $m_{f}^+=f_{+}^M$ when comparing with (\ref{5.1})) where $k, \tilde{k} \in \mathbb{Z}$,\quad $f_{-}^M \in \alpha_{-}C_{\mu}^-(\Gamma), \ (f_{-}^M)^{-1} \in C_{\mu}^-(\Gamma)$, \quad $f_{+}^M \in \alpha_{+}C_{\mu}^+(\Gamma), \linebreak (f_{+}^M)^{-1} \in C_{\mu}^+(\Gamma)$,
$r_{\nu}$ is a rational function defined, for a given $\nu \in \mathbb{C}$, by (\ref{3.16}) and $l \in \{0,1\}$, $l=\ind_{\Gamma}f(\mod 2)$.

If $l=k=\tilde{k}=0$ and $f_{\pm}^M=F_{\pm}\alpha_{\pm}$ with $F_{\pm}\in \mathcal{G}C_{\mu}^{\pm}(\Gamma)$, we have
\begin{equation}
f=(F_+\alpha_+)(\alpha_-F_-);
\label{5.4A}\end{equation}
we say then that (\ref{5.4A}) is an \emph{$M$-special $\Sigma$-factorization}.

It is easy to see that the factors $f_{\pm}^M$ in this case can differ only by a non-zero constant factor, analogously to what happened in the case of existence of a special $\Sigma$-factorization.

As in the previous section, we start by studying the case where \linebreak $\log f \in C_{\mu}(\Gamma)$.
In what follows, let $\beta_{f}$ be defined by (\ref{4.6}) and let $\tilde{\beta}_f \in \tilde{\mathcal{P}}$ be such that $\beta_{f}=\tilde{\beta}_f \mod L$.
\begin{thm}\label{theo5.1}
Let $f \in \mathcal{G}C_{\mu}(\Gamma)$, with $\ind_{j} f=0$ for $j=1,2$. Then $f$ admits a meromorphic $\Sigma$-factorization (\ref{5.2}) where $l=\tilde{k}=0$,
\begin{equation}\label{5.5AA}
    f=f_{-}^{M}r_{\nu}^kf_{+}^{M}
\end{equation}
 where
\begin{itemize}
  \item[(i)]  if $\tilde{\beta}_f \in \mathcal{P}_{1}\cup \partial\mathcal{P}_{1}$, then $k$ and $r_{\nu}^k$ are defined as in Theorem \ref{theo.4.2}, while $f_{\pm}^M$ coincide with $f_{\pm}= \exp \tilde{P}^{\pm}_{\Gamma}(\Log r_{\nu}^{-k}f)$, respectively;
  \item[(ii)] if
  $\tilde{\beta}_f \in \tilde{\mathcal{P}}\backslash(\mathcal{P}_{1}\cup \partial\mathcal{P}_{1})$, then $r_{\nu}^k$ coincides with the rational middle factor in the holomorphic $\Sigma$-factorization $F=F_{-}r_{\nu}^kF_{+}$ defined for $F=f\alpha_{-}^{-1}\alpha_{+}^{-1}$ in Theorem \ref{theo.4.2}, and we have $f_{\pm}^M=F_{\pm}\alpha_{\pm}$.
\end{itemize}
\end{thm}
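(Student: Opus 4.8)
The plan is to reduce Theorem~\ref{theo5.1} to the already-proved Theorems~\ref{theo4.1} and~\ref{theo.4.2} by factoring out the functions $\alpha_{\pm}$ introduced in Section~3. First I would observe that since $\ind_j f = 0$ for $j=1,2$, we have $\log f \in C_{\mu}(\Gamma)$ and $\beta_f$ is well defined (modulo $L$) by (\ref{4.6}); the whole argument then splits according to where the representative $\tilde{\beta}_f \in \tilde{\mathcal{P}}$ lies.

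For part (i), when $\tilde{\beta}_f \in \mathcal{P}_1 \cup \partial\mathcal{P}_1$, there is essentially nothing new to prove: the holomorphic $\Sigma$-factorization $f = f_- r_{\nu}^k f_+$ of Theorem~\ref{theo.4.2} (with $k \in \{0,1\}$ according to whether $\tilde{\beta}_f = 0$ or $\tilde{\beta}_f \in \mathcal{P}_1 \setminus \{0\}$, and with the boundary case $\mathrm{Re}\,\tilde{\beta}_f$ handled as in that theorem) is already of the form (\ref{5.5AA}) with $f_{\pm}^M = f_{\pm}$, simply because every holomorphic $\Sigma$-factorization is a meromorphic one with $l = \tilde{k} = 0$. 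So I would just quote Theorem~\ref{theo.4.2} and note that $f_{\pm} = \exp\tilde{P}^{\pm}_{\Gamma}(\Log r_{\nu}^{-k}f)$ automatically lie in $\mathcal{G}C^{\pm}_{\mu}(\Gamma) \subset \alpha_{\pm}C^{\pm}_{\mu}(\Gamma)$ once we check that the relevant jump $\alpha_{\Log r_{\nu}^{-k}f}$ across $\mathcal{A}$ is an integer multiple of $2\pi i$ — which follows from $\tilde{\beta}_f \in \tilde{\mathcal{P}}$ having $\mathrm{Re}\,\tilde{\beta}_f \in \{0\} \cup {}]0,K]$, i.e. $\beta_{r_{\nu}^{-k}f} = 0 \bmod L$ when $k$ is chosen as prescribed.

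For part (ii), when $\tilde{\beta}_f \in \tilde{\mathcal{P}} \setminus (\mathcal{P}_1 \cup \partial\mathcal{P}_1)$, the key step is to set $F = f\,\alpha_-^{-1}\alpha_+^{-1}$ and compute $\beta_F$. By Corollary~\ref{cor3.7}, $\alpha_+^{-1}(\alpha_+)_* \in \mathcal{G}C^+_{\mu}(\Gamma)$ and similarly $\alpha_-^{-1}(\alpha_-)_* \in \mathcal{G}C^-_{\mu}(\Gamma)$; combining (\ref{3.5}), (\ref{3.5B}) (equivalently the identities $\alpha_+(\alpha_+)_* = \tfrac{k_0-1}{2}\lambda_+/\lambda_-$ and $\alpha_-(\alpha_-)_* = \tfrac{k_0-1}{2}\lambda_-/\lambda_+$ from (\ref{3.20B})) one gets that multiplying $f$ by $\alpha_-^{-1}\alpha_+^{-1}$ shifts $\beta_f$ by a definite lattice-adjacent amount, bringing $\tilde{\beta}_F$ into $\mathcal{P}_1 \cup \partial\mathcal{P}_1$ — this is the computation one must carry out carefully, using that $\log(\alpha_+\alpha_-)$ contributes $\frac{k_0}{2\pi}\int_\Gamma \frac{\log(\alpha_+\alpha_-)}{\tau}d\xi$, which by (\ref{3.5}) equals $-2K - 2iK' \bmod L$, whose relevant half $-K-iK'$ pulls $\tilde{\beta}_f$ back into the fundamental sub-rectangle $\mathcal{P}_1$. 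One must also check $\ind_j F = 0$: since $\alpha_+ \in \mathcal{M}(\Sigma^+)$ and $\alpha_- \in \mathcal{M}(\Sigma^-)$ with $\ind_1\alpha_{\pm} = 0$, $\ind_2\alpha_+ = -1$, $\ind_2\alpha_- = 1$ by Theorem~\ref{theo3.6}, the products telescope and $\ind_j F = \ind_j f = 0$. Then Theorem~\ref{theo.4.2} gives a holomorphic $\Sigma$-factorization $F = F_- r_{\nu}^k F_+$, and multiplying back by $\alpha_+\alpha_-$ yields $f = (F_-\alpha_-)\,r_{\nu}^k\,(F_+\alpha_+) = f_-^M r_{\nu}^k f_+^M$ with $f_{\pm}^M = F_{\pm}\alpha_{\pm} \in \alpha_{\pm}C^{\pm}_{\mu}(\Gamma)$ and $(f_{\pm}^M)^{-1} = F_{\pm}^{-1}\alpha_{\pm}^{-1} \in C^{\pm}_{\mu}(\Gamma)$ (using that $\alpha_{\pm}$ has no zeros and a single simple pole in $\Sigma^{\pm}$, from Theorem~\ref{theo3.6}), so $l = \tilde{k} = 0$ as claimed.

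The main obstacle I expect is the bookkeeping in part (ii): verifying that $\tilde{\beta}_F$ really lands in $\mathcal{P}_1 \cup \partial\mathcal{P}_1$ for \emph{every} $\tilde{\beta}_f$ in the complementary region $\tilde{\mathcal{P}} \setminus (\mathcal{P}_1 \cup \partial\mathcal{P}_1)$ — i.e. that the single translation by the half-period contribution of $\log(\alpha_+\alpha_-)$ is exactly the right shift to cover all the remaining cases (including the subtle boundary configurations with $\mathrm{Re}\,\tilde{\beta}_f = 2K$, where Theorem~\ref{theo.4.2} would have forced $k=3$). This requires carefully tracking the branch choices of $\log\alpha_{\pm}$ fixed just before Theorem~\ref{lemma3.2} and the $\bmod L$ ambiguities, and confirming that the resulting $k$ in (\ref{5.5AA}) is indeed reduced (to $k \in \{0,1,2\}$, or even smaller) compared with the holomorphic case — which is the whole point of introducing the meromorphic factorization. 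Everything else is a direct assembly of Theorems~\ref{theo3.6}, \ref{lemma3.2}, \ref{theo.4.2} and Corollary~\ref{cor3.7}.
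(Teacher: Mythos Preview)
Your approach is essentially the paper's own: part (i) is a direct invocation of Theorem~\ref{theo.4.2}, and part (ii) applies Theorem~\ref{theo.4.2} to $F=f\alpha_-^{-1}\alpha_+^{-1}$ after using Theorems~\ref{theo3.6} and~\ref{lemma3.2} to verify $\ind_j F=0$ and $\tilde\beta_F\in\mathcal P_1$. The only slip is the phrase ``whose relevant half $-K-iK'$'': there is no halving. By (\ref{3.5}) you have $\beta_{\alpha_+\alpha_-}=-2K-2iK'\equiv 2K\bmod L$ (since $2iK'\in L$), and it is precisely this translation by $2K$ that carries $\{s+it:\ s\in{]}{-2K},{-K}{[}\,\cup\,{]}K,2K{]}\}$ onto $\{s\in{]}{-K},K{[}\}=\mathcal P_1$; with this correction the bookkeeping you flag as the main obstacle is immediate and yields $k\in\{0,1\}$ for $F$, exactly as the paper states.
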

\begin{proof}
(i) is straightforward; (ii)  is a  consequence of Theorems \ref{theo3.6} and \ref{lemma3.2}, which imply that if $\beta_F=\tilde{\beta}_F \mod L$ and $\tilde{\beta}_F \in \tilde{\mathcal{P}}$, then actually $\tilde{\beta}_F \in \mathcal{P}_{1}$, and the result follows from Theorem \ref{theo.4.2}.
\end{proof}
Consequently, taking also Theorem \ref{theo4.1} (ii) into account, we have:

\begin{cor}\label{cor5.2}
With the same assumptions as in Theorem \ref{theo5.1}, $f$ admits an $M$-special $\Sigma$-factorization if and only if $\tilde{\beta}_f=2K$. In this case, \linebreak$f=(F_{-}\alpha_{-})(\alpha_{+}F_{+})$,
\begin{equation}
        F_{\pm}=\exp \tilde{P}_{\Gamma}^{\pm}(\Log F), \quad F=f\alpha_{-}^{-1}\alpha_{+}^{-1}.
\label{5.6}
\end{equation}
\end{cor}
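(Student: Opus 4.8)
The plan is to derive Corollary~\ref{cor5.2} directly from Theorem~\ref{theo5.1} together with Theorem~\ref{theo4.1}(ii), reducing the question about $M$-special $\Sigma$-factorizations of $f$ to the question about special $\Sigma$-factorizations of the auxiliary function $F=f\alpha_{-}^{-1}\alpha_{+}^{-1}$.

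First I would recall that, by definition (see (\ref{5.4A})), an $M$-special $\Sigma$-factorization is exactly the case $l=k=\tilde{k}=0$ with $f_{\pm}^{M}=F_{\pm}\alpha_{\pm}$, $F_{\pm}\in\mathcal{G}C_{\mu}^{\pm}(\Gamma)$. Comparing with the two alternatives in Theorem~\ref{theo5.1}, the factorization (\ref{5.5AA}) has $k=0$ precisely when $\tilde{\beta}_f=0$ in case (i) or when, in case (ii), the index $k$ produced by Theorem~\ref{theo.4.2} applied to $F$ vanishes. In case (i) the outer factors are $f_{\pm}^{M}=f_{\pm}$, holomorphic and invertible in $C_{\mu}^{\pm}(\Gamma)$, so this is an honest special (not merely $M$-special) $\Sigma$-factorization, and $f_{\pm}^{M}$ is not of the form $F_{\pm}\alpha_{\pm}$ with $F_{\pm}$ invertible unless $\alpha_{\pm}^{\pm1}$ were units in those algebras, which they are not (by Theorem~\ref{theo3.6} each $\alpha_{\pm}$ has a pole). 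Hence an $M$-special $\Sigma$-factorization forces case (ii) with $k=0$, i.e. $f_{\pm}^{M}=F_{\pm}\alpha_{\pm}$ and $F=F_{-}F_{+}$ a special $\Sigma$-factorization of $F$.

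Next I would invoke Theorem~\ref{theo4.1}(ii): $F$ admits a special $\Sigma$-factorization if and only if $\beta_F=0\ \mod L$. Now use the additivity of $\beta_{(\cdot)}$ under products together with Theorem~\ref{lemma3.2}: since $F=f\alpha_{-}^{-1}\alpha_{+}^{-1}$, we have $\beta_F=\beta_f-\frac{k_0}{2\pi}\int_\Gamma\frac{\log\alpha_-}{\tau}d\xi-\frac{k_0}{2\pi}\int_\Gamma\frac{\log\alpha_+}{\tau}d\xi=\beta_f-2(-K-iK')=\beta_f+2K+2iK'\ \mod L$. Therefore $\beta_F=0\ \mod L$ is equivalent to $\beta_f=-2K-2iK'=2K\ \mod L$ (using $4K\in L$ and $2iK'\in L$), i.e. to $\tilde{\beta}_f=2K$. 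This establishes the stated equivalence. For the explicit formulas one then simply unwinds Theorem~\ref{theo5.1}(ii) with $k=0$: $F_{\pm}=\exp\tilde{P}_{\Gamma}^{\pm}(\Log F)$ as in (\ref{4.4}) applied to $F$, yielding $f=(F_{-}\alpha_{-})(\alpha_{+}F_{+})$ with $F_{\pm}$ given by (\ref{5.6}), which is exactly the asserted $M$-special $\Sigma$-factorization.

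The only genuinely delicate point — and the one I would be most careful about — is the computation of $\beta_F$ modulo $L$, since Theorem~\ref{lemma3.2} only gives $\frac{k_0}{2\pi}\int_\Gamma\frac{\log\alpha_{\pm}}{\tau}d\xi\equiv -K-iK'\ \mod L$ and one must check that the particular branches of $\log\alpha_{\pm}$ on $\Gamma$ chosen before Theorem~\ref{lemma3.2} are compatible with the branch of $\log F$ implicit in $\Log F$, so that no spurious integer multiple of $2\pi i$ (hence of a period) is introduced; fortunately everything is only needed $\mod L$, and $\ind_j\alpha_{\pm}$ and $\ind_j F$ are all controlled by Theorem~\ref{theo3.6} and the hypothesis $\ind_j f=0$, so the ambiguity is harmless. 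I would also note in passing that the ``only if'' direction already appears implicitly in Theorem~\ref{theo5.1}: when $\tilde{\beta}_f\neq 2K$ the factorization produced there is either special with nontrivial $r_\nu$ ($k\neq0$) or has $f_{\pm}^{M}$ genuinely of $\alpha_{\pm}$-type but with $k\neq0$, and by the uniqueness-up-to-constants remark following (\ref{5.4A}) no $M$-special factorization can coexist with one having $k\neq0$.
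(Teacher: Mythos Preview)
Your approach is the paper's: the paper simply says Corollary~\ref{cor5.2} follows from Theorem~\ref{theo5.1} together with Theorem~\ref{theo4.1}(ii), and your paragraph~2 computation of $\beta_F$ via Theorem~\ref{lemma3.2} is exactly the intended one.

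There is, however, a small gap in how you establish the reduction ``$f$ has an $M$-special $\Sigma$-factorization $\Leftrightarrow$ $F$ has a special $\Sigma$-factorization''. Your paragraph~1 argues via the dichotomy of Theorem~\ref{theo5.1}, but the fact that the factorization \emph{constructed} there in case~(i) is not $M$-special does not by itself prevent $f$ from having some other $M$-special factorization; and the uniqueness remark after~(\ref{5.4A}) only compares two $M$-special factorizations with each other, so it cannot rule out coexistence with a factorization having $k\neq 0$, as your last paragraph attempts. The correct (and much shorter) argument is direct from the definition: since $\alpha_{\pm}$ are scalar, $f=(F_-\alpha_-)(\alpha_+F_+)$ with $F_\pm\in\mathcal{G}C_\mu^\pm(\Gamma)$ is literally the same equation as $F:=f\alpha_-^{-1}\alpha_+^{-1}=F_-F_+$. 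After checking $\ind_j F=0$ (from $\ind_j f=0$ and the indices in Theorem~\ref{theo3.6}, so that Theorem~\ref{theo4.1}(ii) applies), your paragraph~2 then handles both directions simultaneously.
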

\begin{cor}
With the same assumptions as in Theorem \ref{theo5.1}, if either  $\tilde{\beta}_f \in
\left\{s+it: s\in ]K,2K], t \in ]-iK',iK']\right\}\backslash\{2K\}$
or   $\tilde{\beta}_f \in \left\{s+it: s\in ]-2K,-K[, \right.$ \linebreak $\left. t \in ]-iK',iK']\right\}
$, then $f$ admits a meromorphic $\Sigma$-factorization
\begin{equation}
        f=(F_{-}\alpha_{-})r_{\nu}(\alpha_{+}F_{+}),
\label{5.7}
\end{equation}
with $F_{\pm} \in \mathcal{G}C_{\mu}^{\pm}(\Gamma)$ and $r_{\nu}$ of the form (\ref{3.16}), where $F=f\alpha_{-}^{-1}\alpha_{+}^{-1}r_{\nu}^{-1}$.
\end{cor}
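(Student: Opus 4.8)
The plan is to reduce the stated corollary to Theorem \ref{theo5.1}(ii) by the same device used throughout this section: peeling off the meromorphic factors $\alpha_\pm$ and a rational factor $r_\nu$ until what remains is a function with trivial partial indices and integral $\beta$ lying in the fundamental region where a special $\Sigma$-factorization is available. First I would observe that in both cases for $\tilde\beta_f$ in the hypothesis, $\tilde\beta_f$ lies in $\tilde{\mathcal P}\setminus(\mathcal P_1\cup\partial\mathcal P_1)$, so Theorem \ref{theo5.1}(ii) already gives $f=f_-^M r_\nu^k f_+^M$ with $f_\pm^M=F_\pm\alpha_\pm$, $F_\pm\in\mathcal GC_\mu^\pm(\Gamma)$, and $r_\nu^k$ the rational middle factor of the holomorphic $\Sigma$-factorization of $F=f\alpha_-^{-1}\alpha_+^{-1}$. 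So the real content is to verify that, under the two displayed restrictions on $\tilde\beta_f$, the exponent $k$ equals exactly $1$, i.e. $r_\nu^k=r_\nu$.

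The key computation is to track $\beta_F$. By the logarithmic additivity of the functional $f\mapsto\frac{k_0}{2\pi}\int_\Gamma(\log f/\tau)\,d\xi$ together with (\ref{3.5}) (which gives $\frac{k_0}{2\pi}\int_\Gamma(\log\alpha_\pm/\tau)\,d\xi=-K-iK'\bmod L$), we get
\begin{equation*}
\beta_F=\beta_f-\Bigl(\tfrac{k_0}{2\pi}\int_\Gamma\tfrac{\log\alpha_-}{\tau}d\xi\Bigr)-\Bigl(\tfrac{k_0}{2\pi}\int_\Gamma\tfrac{\log\alpha_+}{\tau}d\xi\Bigr)=\beta_f+2K+2iK'\equiv\beta_f+2K\bmod L.
\end{equation*}
Now if $\tilde\beta_f\in\{s+it: s\in\,]K,2K],\ t\in\,]-iK',iK']\}\setminus\{2K\}$, then $\tilde\beta_f+2K$ has real part in $]3K,4K]$, and reducing modulo $4K$ puts the representative in $]-K,0]$; similarly if $\tilde\beta_f\in\{s+it:s\in\,]-2K,-K[,\ t\in\,]-iK',iK']\}$, then $\tilde\beta_f+2K$ has real part in $]0,K[$. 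In both cases the reduced representative $\tilde\beta_F$ lies in $\mathcal P_1\setminus\{0\}$ (one must also check the imaginary part lands correctly, but $\alpha_\pm$ contribute only real-part shifts mod $L$, so $t$ is unchanged and already in $]-iK',iK']$). Since $\tilde\beta_F\in\mathcal P_1\setminus\{0\}$, Theorem \ref{theo.4.2}(i) gives $k=1$, and Theorem \ref{theo.4.2}(ii) then produces $r_\nu$ of the form (\ref{3.16})–(\ref{3.14}) with $\beta=\tilde\beta_F$. Combining with Theorem \ref{theo5.1}(ii) yields $f=(F_-\alpha_-)r_\nu(\alpha_+F_+)$ with $F_\pm\in\mathcal GC_\mu^\pm(\Gamma)$, where $F=f\alpha_-^{-1}\alpha_+^{-1}r_\nu^{-1}$, as claimed.

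The main obstacle I anticipate is the careful bookkeeping of the modulo-$L$ reduction: one has to be sure that, in the two stated ranges for $\tilde\beta_f$, adding $2K$ and reducing never lands on $0$ (which would force $k=0$ and an $M$-special factorization, excluded here precisely by deleting $2K$ and the right half-boundary) nor escapes $\mathcal P_1$ into the region where $k=2$. The boundary cases—$\tilde\beta_f$ with $\mathrm{Re}=2K$, or $\mathrm{Re}=K$, or on the segment $t=iK'$—are where sign conventions in the definition (\ref{3.13}) of $\mathcal P_1$ (half-open in both variables) and in the choice of $\tilde{\mathcal P}=\mathcal P\setminus(s_1'\cup\gamma_2')$ must be matched exactly to the inclusion/exclusion of endpoints in the hypothesis. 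Once that edge-case analysis is pinned down, the rest is a direct appeal to the two theorems already proved.
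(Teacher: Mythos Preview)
Your proposal is correct and follows the same approach the paper implicitly intends: this corollary is stated without proof, as an immediate consequence of Theorem~\ref{theo5.1}(ii) together with the computation of $\beta_F$ via (\ref{3.5}). Your tracking of $\tilde\beta_F=\tilde\beta_f+2K\bmod L$ and the verification that it lands in $\mathcal P_1\setminus\{0\}$ (hence $k=1$ by Theorem~\ref{theo.4.2}) is exactly the bookkeeping the paper leaves to the reader; the edge-case worries you flag are already handled by the half-open conventions in (\ref{3.13}) and the exclusion of the single point $2K$.
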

We remark that, in the cases considered in the two previous corollaries, $f$ also admits a holomorphic $\Sigma$-factorization that can be obtained according to Theorem \ref{theo.4.2}. In particular for the case where $\beta_{f}=2K \mod L$, a zero and a pole of order 3 have to be considered in the rational middle factor, as regards either $\Sigma^+$ or $\Sigma^-$. In contrast with this, in (\ref{5.6}) only one simple pole is involved whether we consider the factors which are meromorphic in $\Sigma^+$ or in $\Sigma^-$.

As a consequence of Theorem \ref{theo3.6} and Corollary \ref{cor3.7}, the case where $\ind_{j}f\neq 0$ for some $j=1,2$ can be reduced, as in the previous section, to that where $\ind_{1}f=\ind_{2}f=0$, using the meromorphic factors $\alpha_{+}^{\pm 1}$ and $(\alpha_{+})_*$. In what follows we define, for $\tilde{f} \in \mathcal{G}C_{\mu}(\Gamma)$,
\begin{equation}
        f=\tilde{f}(\frac{\lambda_{-}}{\lambda_{+}})^{-\tilde{k}}\alpha_{-}^{-l}(\alpha_{+}^{-1}(\alpha_{+})_*)^{m}
\label{5.5.A}
\end{equation}
with $\tilde{k}, l$ as in (\ref{4.8D}) and (\ref{4.8E}) and  $m=(n_{1}-n_{2})/2 \text{ if } \ind_{\Gamma}f \text{ is even},$  $m=(n_{1}-n_{2}+1)/2 \text{ if } \ind_{\Gamma}f \text{ is odd}$.

 It follows from Theorem \ref{theo3.6} and Corollary \ref{cor3.7} that $\ind_{j}f=0$ for $j=1,2$, so that $f$ admits a meromorphic $\Sigma$-factorization of the form (\ref{5.2}), according to Theorem \ref{theo5.1}. With $f_{\pm}^M$ and $r_{\nu}^k$ defined as in Theorem \ref{theo5.1} and $\tilde{k}, l, m$ defined as in the previous paragraph, it is easy to see that the following holds.
\begin{thm}\label{theo5.4}
Every $\tilde{f}\in \mathcal{G}C_{\mu}(\Gamma)$ admits a meromorphic $\Sigma$-factorization
\begin{equation*}
        \tilde{f}=\tilde{f}^{M}_{-}\alpha_{-}^l(\frac{\lambda_{-}}{\lambda_{+}})^{\tilde{k}}r_{\nu}^k\tilde{f}^{M}_{+}
\end{equation*}
with $\tilde{f}^{M}_{-}=f^M_{-}$, $\tilde{f}^{M}_{+}=f^M_{+}(\alpha_{+}^{-1}(\alpha_{+})_*)^{-m}$.
\end{thm}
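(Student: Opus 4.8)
The plan is to reduce the general case to the already-established case $\ind_1 f = \ind_2 f = 0$ by peeling off explicit meromorphic and rational correcting factors, exactly mirroring the strategy used in the proof of the holomorphic $\Sigma$-factorization theorem (the one using $S^l(\alpha_+^{-1}(\alpha_+)_*)^m$). Concretely, given $\tilde f \in \mathcal{G}C_\mu(\Gamma)$ with $n_j = \ind_j \tilde f$, I set
\begin{equation*}
f=\tilde{f}\Bigl(\tfrac{\lambda_{-}}{\lambda_{+}}\Bigr)^{-\tilde{k}}\alpha_{-}^{-l}\bigl(\alpha_{+}^{-1}(\alpha_{+})_*\bigr)^{m}
\end{equation*}
with $\tilde k, l$ as in (\ref{4.8D})--(\ref{4.8E}) and $m = (n_1-n_2)/2$ or $(n_1-n_2+1)/2$ according to the parity of $\ind_\Gamma f$. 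The first step is a bookkeeping computation of $\ind_1 f$ and $\ind_2 f$: using $\ind_j(\lambda_-/\lambda_+) = 0$ trivially for $j=1$ and $j=2$ (it is $*$-invariant with index $0$ on $\dot{\mathbb{R}}$... wait — actually $\lambda_-/\lambda_+$ has index $-1$ on $\dot{\mathbb{R}}$), I must carefully track the contributions: $\ind_j$ of $(\lambda_-/\lambda_+)^{-\tilde k}$ is $\tilde k$ on each $\Gamma_j$, $\ind_j \alpha_-^{-l}$ is $(0, -l)$ from (\ref{3.22}), and $\ind_j(\alpha_+^{-1}(\alpha_+)_*)^m$ is $(-m, m)$ from (\ref{3.22C}). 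Summing and using the defining choices of $\tilde k, l, m$ must give $\ind_1 f = \ind_2 f = 0$; this is the routine arithmetic check that I would carry out case-by-case on the parity of $\ind_\Gamma f$.

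The second step is to invoke Theorem \ref{theo5.1}: since $\ind_j f = 0$ for $j=1,2$, we have $\log f \in C_\mu(\Gamma)$ and $f$ admits a meromorphic $\Sigma$-factorization $f = f_-^M r_\nu^k f_+^M$ with $l = \tilde k = 0$ in (\ref{5.2}), where $f_\pm^M$ and $r_\nu^k$ are the explicit objects produced there (depending on whether $\tilde\beta_f$ lies in $\mathcal{P}_1\cup\partial\mathcal{P}_1$ or not). The third and final step is to substitute this back and solve for $\tilde f$:
\begin{equation*}
\tilde{f}=f_{-}^{M}\,\alpha_{-}^{l}\,\bigl(\tfrac{\lambda_{-}}{\lambda_{+}}\bigr)^{\tilde{k}}\,r_{\nu}^{k}\,f_{+}^{M}\bigl(\alpha_{+}^{-1}(\alpha_{+})_*\bigr)^{-m},
\end{equation*}
and then verify that this is a bona fide meromorphic $\Sigma$-factorization in the sense of (\ref{5.2}), i.e.\ that $\tilde f_-^M := f_-^M \in \alpha_- C_\mu^-(\Gamma)$ with $(\tilde f_-^M)^{-1}\in C_\mu^-(\Gamma)$, and that $\tilde f_+^M := f_+^M(\alpha_+^{-1}(\alpha_+)_*)^{-m} \in \alpha_+ C_\mu^+(\Gamma)$ with inverse in $C_\mu^+(\Gamma)$. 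For the $+$ factor this uses Corollary \ref{cor3.7}, namely $\alpha_+^{-1}(\alpha_+)_* \in \mathcal{G}C_\mu^+(\Gamma)$, so multiplying $f_+^M \in \alpha_+ C_\mu^+(\Gamma)$ by an invertible element of $C_\mu^+(\Gamma)$ keeps it in $\alpha_+ C_\mu^+(\Gamma)$ with inverse still in $C_\mu^+(\Gamma)$; and the middle factor $\alpha_-^l (\lambda_-/\lambda_+)^{\tilde k} r_\nu^k$ is exactly of the allowed form in (\ref{5.2}) since $\alpha_-^l$ has a single simple pole in $\Sigma^-$ and $r_\nu^k \in \mathcal{R}(\Sigma)$, $(\lambda_-/\lambda_+)^{\tilde k} \in \mathcal{R}(\Sigma)$.

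I do not expect a genuine obstacle here — the theorem is explicitly flagged as ``easy to see'' — so the only real work is the careful parity-split verification in step one that the correcting exponents $\tilde k, l, m$ indeed annihilate both indices $\ind_1 f$ and $\ind_2 f$ simultaneously, and confirming in step three that $l$ (which is $\ind_\Gamma f \bmod 2$, matching the constraint $l = \ind_\Gamma f (\bmod 2)$ imposed on (\ref{5.2})) is consistent, since $\ind_\Gamma f = \ind_\Gamma \tilde f$ is unchanged by multiplying by the index-balanced... more precisely, one checks that $l$ as defined for $\tilde f$ equals the $l$ occurring in the factorization of $\tilde f$. The potential subtlety worth a sentence of care is that $\alpha_-$ and $\alpha_+$ are only meromorphic (not rational) in $\Sigma^\mp$ respectively, so one should note that $(\alpha_+^{-1}(\alpha_+)_*)^{-m}$ being holomorphic and invertible on $\Sigma^+\cup\Gamma$ (by Corollary \ref{cor3.7}) is precisely what makes its product with $f_+^M$ again of type $\alpha_+ C_\mu^+(\Gamma)$ — there is no loss of the ``single simple pole'' structure because $\alpha_+^{-1}(\alpha_+)_*$ has neither zeros nor poles in $\Sigma^+$.
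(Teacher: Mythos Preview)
Your proposal is correct and follows exactly the paper's approach: the paper defines $f$ via (\ref{5.5.A}), invokes Theorem~\ref{theo3.6} and Corollary~\ref{cor3.7} to conclude $\ind_j f=0$, applies Theorem~\ref{theo5.1}, and then simply states ``it is easy to see that the following holds'' without writing out the substitution or the verification that $\tilde f_+^M$ stays in $\alpha_+ C_\mu^+(\Gamma)$ --- steps you spell out explicitly. Your only slips are sign-convention wobbles in the index of $\lambda_-/\lambda_+$ (it is $+1$, so $(\lambda_-/\lambda_+)^{-\tilde k}$ contributes $-\tilde k$ to each $\ind_j$), but as you say this is routine arithmetic and the case-split verification does go through.
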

\begin{thm}\label{theo5.5}
Let $\tilde{f} \in \mathcal{G}C_{\mu}(\Gamma)$, $\ind_{\Gamma}\tilde{f}=0$. Then $\tilde{f}$ admits an $M$-special $\Sigma$-factorization if and only if $\beta_{\tilde{f}}=2(n+1)K \mod L$
where $n=\ind_1 \tilde{f}=-\ind_2 \tilde{f}$.
\end{thm}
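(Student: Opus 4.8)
The plan is to reduce Theorem~\ref{theo5.5} to the case $\ind_1\tilde f=\ind_2\tilde f=0$ already handled in Corollary~\ref{cor5.2}, exactly as Theorem~\ref{theo5.4} was obtained from Theorem~\ref{theo5.1}. Since $\ind_\Gamma\tilde f=0$, the two local indices are $\ind_1\tilde f=n$ and $\ind_2\tilde f=-n$ for some $n\in\mathbb Z$, so in the notation of (\ref{5.5.A}) we have $\tilde k=0$, $l=0$ and $m=n$. Thus set $f=\tilde f(\alpha_+^{-1}(\alpha_+)_*)^{n}$. By Corollary~\ref{cor3.7} we have $\alpha_+^{-1}(\alpha_+)_*\in\mathcal GC_\mu^+(\Gamma)$, hence $\tilde f$ admits an $M$-special $\Sigma$-factorization if and only if $f$ does: multiplying an $M$-special factorization $f=(F_-\alpha_-)(\alpha_+F_+)$ of $f$ on the right by $(\alpha_+^{-1}(\alpha_+)_*)^{-n}\in\mathcal GC_\mu^+(\Gamma)$ again produces factors of the required type, and conversely.

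Next I would apply Corollary~\ref{cor5.2}, which states that $f$ (with $\ind_jf=0$) admits an $M$-special $\Sigma$-factorization if and only if $\tilde\beta_f=2K$, equivalently $\beta_f=2K\mod L$. It remains only to relate $\beta_f$ and $\beta_{\tilde f}$. By definition (\ref{4.6}),
\begin{equation*}
\beta_f=\frac{k_0}{2\pi}\int_\Gamma\frac{\log f}{\tau}\,d\xi
=\beta_{\tilde f}+n\,\frac{k_0}{2\pi}\int_\Gamma\frac{\log(\alpha_+^{-1}(\alpha_+)_*)}{\tau}\,d\xi,
\end{equation*}
and by (\ref{3.24}) the last integral equals $2K\mod L$. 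Hence $\beta_f=\beta_{\tilde f}+2nK\mod L$, so $\beta_f=2K\mod L$ if and only if $\beta_{\tilde f}=2K-2nK=2(1-n)K\mod L$. Since $n$ ranges over $\mathbb Z$, the set of admissible residues $\{2(1-n)K:n\in\mathbb Z\}$ equals $\{2(n+1)K:n\in\mathbb Z\}$ once we rename the integer; more carefully, writing $n'=\ind_1\tilde f=n$ one checks that the condition is $\beta_{\tilde f}=2(n'+1)K\mod L$ after noting that $-2n'K\equiv 2n'K\mod L$ is false in general, so the sign must be tracked: $\beta_{\tilde f}=2K-2nK=-2(n-1)K\mod L$, and since $\ind_1\tilde f=n$ this is the asserted $\beta_{\tilde f}=2(n+1)K\mod L$ precisely when $-2(n-1)K\equiv 2(n+1)K\mod L$, i.e. $4nK\equiv 0\mod L$, which holds because $4K$ generates part of the lattice $L=\mathbb Z\cdot 4K+\mathbb Z\cdot 2iK'$.

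The main obstacle I expect is exactly this bookkeeping of the index $n$ and the residue modulo $L$: one must be careful that the shift by $(\alpha_+^{-1}(\alpha_+)_*)^{n}$ contributes $2nK$ (not $-2nK$) to $\beta$, and that $4K\in L$ so that the final congruence collapses to the clean form $\beta_{\tilde f}=2(n+1)K\mod L$ with $n=\ind_1\tilde f=-\ind_2\tilde f$. The analytic content — boundedness and invertibility of the factors, the identification of $\alpha_+^{-1}(\alpha_+)_*$ as an element of $\mathcal GC_\mu^+(\Gamma)$, and the explicit formulas $F_\pm=\exp\tilde P_\Gamma^\pm(\Log F)$ for $F=f\alpha_-^{-1}\alpha_+^{-1}$ — is already packaged in Corollary~\ref{cor5.2} and Corollary~\ref{cor3.7}, so no new estimates are needed.
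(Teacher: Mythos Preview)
Your approach is correct and is exactly the route the paper intends: Theorem~\ref{theo5.5} is not given an explicit proof in the paper, but it is the meromorphic analogue of Theorem~\ref{theo4.5}, and your argument mirrors that proof step for step---reduce to zero indices via $f=\tilde f(\alpha_+^{-1}(\alpha_+)_*)^n$, invoke Corollary~\ref{cor5.2} for $f$, and translate back using (\ref{3.24}). The only comment is that your final bookkeeping, while ultimately correct, is more convoluted than necessary: once you have $\beta_f=\beta_{\tilde f}+2nK\bmod L$ and $\beta_f=2K\bmod L$, you get $\beta_{\tilde f}=2(1-n)K\bmod L$, and since $4nK\in L$ this is immediately the same congruence class as $2(1-n)K+4nK=2(n+1)K$, with no need for the intermediate detours about renaming integers or tracking signs.
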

We conclude this section with the following remark which allows a better understanding of these results and may deserve further investigation in the future.

It is clear that to each $\tilde{f}\in \mathcal{G}C_{\mu}(\Gamma)$ we can associate by (\ref{5.5.A}) a unique $f \in \mathcal{G}C_{\mu}(\Gamma)$ such that $\ind_jf=0, \ j=1,2$ and reduce the study of the $\Sigma$-factorization of $\tilde{f}$ to that of $f$. On the other hand, to each such $f$ we can associate a unique $\tilde{\beta}_f \in \tilde{\mathcal{P}}$ such that $\beta_f=\tilde{\beta}_f \mod L$ and, thus, a unique point $P_f$ in $\Sigma$, $P_f=\sigma(\tilde{\beta}_f)=\sigma(\beta_f)$.

Now, according to the results of sections 4 and 5, we conclude that $P_f$ determines the form of the $\Sigma$-factorization of $f$. For instance, considering meromorphic $\Sigma$-factorizations of the form (\ref{5.5AA}), if $P_{f_1} \in \Sigma_1$ and $P_{f_2} \in \Sigma_2$ have the same projection in $\mathbb{C}$, then the factorizations of $f_1$ and $f_2$ differ by a factor $\alpha_-\alpha_+$.
In particular we see from Theorem \ref{theo4.1} and Corollary \ref{cor5.2} that, for $f$ such that $\ind_jf=0, \ j=1,2,$ the existence of a special $\Sigma$-factorization corresponds to having $P_f=\mathbf{0_1}$, while the existence of an M-special $\Sigma$-factorization corresponds to $P_f=\mathbf{0_2}$.
\section{$\Sigma$-factorization and Riemann-Hilbert problems}
In this section we apply the results of the previous sections to the study of some vectorial  RH problems that can be equivalently formulated as scalar RH problems relative to a contour on a Riemann surface.

Let $G \in \mathcal{G}(C_{\mu}(\dot{\mathbb{R}}))^{2\times 2}$ be a matrix function satisfying a relation
\begin{equation}
        G^{T}QG=\det G. Q,
\label{6.1}
\end{equation}
where $Q$ is a symmetric rational matrix whose entries do not have poles on $\dot{\mathbb{R}}$ and possesses an inverse of the same type, and $\det G $ admits a bounded factorization
\begin{equation}
        \det G=\gamma_{-}(\frac{\lambda_{-}}{\lambda_{+}})^{m}\gamma_{+}
\label{6.2}
\end{equation}
with $\gamma_{\pm} \in \mathcal{G}C_{\mu}^{\pm}(\dot{\mathbb{R}})$, $m \in \mathbb{Z}$ and $\lambda_{\pm}$ defined by (\ref{3.20C}).
We denote by $C(Q)$ the class of matrix functions $G\in \mathcal{G}(C_\mu(\dot{\mathbb{R}}))^{2\times 2}$ satisfying (\ref{6.1}) (\cite{CSS, CSCarpentier}).

RH problems of the form
\begin{equation}
         G\varphi_{+}=\varphi_{-}+\eta
\label{6.4}
\end{equation}
which consist in, given a matrix function $G$ satisfying (\ref{6.1}) and a vector function $\eta$, finding $\varphi_{\pm}$ in appropriate spaces of analytic vector functions, appear in connection with many problems in Physics, Engineering and Mathematics (see for instance \cite{CSS1, Daniele, Khrapkov, Kup, MeSp1, MeSp2}). Let us assume that, in (\ref{6.4}), $G \in C(Q)$, $\eta \in (L_p(\mathbb{R}))^2$ with $1<p<\infty$ and $\varphi_{\pm}$ are sought in the Hardy spaces $(H_p^{\pm})^2$, with $H_p^{\pm}:=H_p(\mathbb{C}^{\pm})$ (\cite{Duren, Koosis}). In this case, (\ref{6.4}) is equivalent to the equation $T_G\varphi_+=\eta_+$
where $T_G:(H^+_p)^2\rightarrow (H^+_p)^2$, $T_G\varphi_+=P^+(G\varphi_+)$, is the Toeplitz operator in $(H^+_p)^{2}$  with symbol $G$  and $\eta_+=P^+\eta$.

Since $G\in \mathcal{G}(C_\mu(\dot{\mathbb{R}}))^{2\times 2}$, it admits a representation as a product \cite{MP, CG, LS}
\begin{equation}\label{6.6.1}
G=G_-DG_+
\end{equation}
with
\begin{equation}\label{6.7.1}
G_{\pm}\in \mathcal{G}(C_{\mu}^{\pm})^{2\times 2}, \quad  D= \diag ((\lambda_-/\lambda_+)^{k_1}, \ (\lambda_-/\lambda_+)^{k_2})
\end{equation}
where $k_1, k_2 \in \mathbb{Z}$ are uniquely defined, up to their order, and are called the \emph{partial indices} of $G$. We have moreover $k_1+k_2=m$, and if we assume (for reasons that will be explained later in this section) that $m \in \{0,1\}$, we can write
\begin{equation}\label{6.8.1}
k_1=-k \text{ for some } k\geq 0, \ k_2=k+m.
\end{equation}
A complete solvability picture of (\ref{6.4}) can be obtained from the factorization (\ref{6.6.1}), as well as many properties of the Toeplitz operator $T_G$ (\cite{MP, CG, LS}). Namely, $T_G$ is invertible if and only if $k_1=k_2=0$ (the factorization \linebreak$G=G_-G_+$ being then called \emph{canonical}) and $(T_G)^{-1}$ can be expressed in terms of the factors $G_{\pm}$: $(T_G)^{-1}=G_+^{-1}P^+G_-^{-1}I_+$, where $I_+$ denotes the identity operator in $(H_p^+)^2$.

The partial indices of $G$ are not, in general, known a priori. They can however be determined by solving the homogeneous ($\eta=0$) RH problem
\begin{equation}
         G\varphi_{+}=\varphi_{-}, \quad \varphi_{\pm}\in (H_p^{\pm})^2
\label{6.9.1}
\end{equation}
which is equivalent to the problem of characterizing $\ker T_G$. In fact, assuming $m \in \{0,1\}$, the integer $k$ in (\ref{6.8.1}) is equal to the dimension of the space of solutions to (\ref{6.9.1}), and to the dimension of $\ker T_G$. It is not difficult to see on the other hand that, due to (\ref{6.6.1}), (\ref{6.7.1}) and (\ref{6.8.1}), $(\varphi_+,\varphi_-)$ is a solution to (\ref{6.9.1}) if and only if $\phi_{\pm}=\lambda_{\pm}\varphi_{\pm}$ satisfy
\begin{equation}
         G\phi_{+}=\phi_{-}, \quad \phi_{+}\in (C_{\mu}^+(\dot{\mathbb{R}}))^2, \ \phi_{-}\in (C_{\mu 0}^-(\dot{\mathbb{R}}))^2
\label{6.10.1}
\end{equation}
where $C_{\mu 0}^-(\dot{\mathbb{R}})=(\lambda_{+}/\lambda_-)C_{\mu}^-(\dot{\mathbb{R}})$. We will thus study here the vectorial RH problem (\ref{6.10.1}), assuming that $Q$ and $G$ take the normal forms associated with $C(Q)$ (\cite{CMalheiro}),
\begin{equation}
         Q=\left[
             \begin{array}{cc}
               -q & 0 \\
               0 & 1 \\
             \end{array}
           \right], \quad G=\left[
             \begin{array}{cc}
               \alpha & \delta \\
               q\delta & \alpha \\
             \end{array}
           \right]
\label{6.6}
\end{equation}
with $\alpha$, $\delta \in C_{\mu}(\dot{\mathbb{R}})$ and
\begin{equation}
         q=-\det Q=\frac{\mathfrak{p}_{1}}{\mathfrak{p}_{2}}
\label{6.7}
\end{equation} where $\mathfrak{p}_{1}(\xi)=(\xi+i)(\xi+ik_{0}), \quad \mathfrak{p}_{2}(\xi)=(\xi-i)(\xi-ik_{0})$.
The rational function $q$ in (\ref{6.7}) is related to the polynomial $\mathfrak{p}$ defined by the right-hand side of (\ref{2.2}) by $q=\mathfrak{p}\mathfrak{p_{2}^{-2}}=\mathfrak{p_{1}^2}\mathfrak{p}^{-1}$
and we say that $\Sigma$, defined as in section 2, is the \emph{Riemann surface associated to $C(Q)$}.

Let now $T_{\Sigma}:(C_{\mu}(\dot{\mathbb{R}}))^2\rightarrow C_{\mu}(\Gamma)$ be the linear transformation defined by $T_{\Sigma}(\varphi_{1}, \varphi_{2})_{|\Gamma_{j}}=\varphi_{j}, \quad j=1,2,$
for which it is easy to see that the following holds (\cite{CSS}).
\begin{prop}\label{prop6.1}
\begin{itemize}
  \item[(i)] $T_{\Sigma}$ maps $(\varphi_{1}+\rho\varphi_{2}, \varphi_{1}-\rho\varphi_{2})$ into $\varphi_{1}+\tau\varphi_{2}$.
  \item[(ii)] $T_{\Sigma}$ is invertible with inverse $T_{\Sigma}^{-1}$ given by
  \begin{equation*}
  T_{\Sigma}^{-1} : C_{\mu}(\Gamma)  \longrightarrow  C_{\mu}(\dot{\mathbb{R}})^2, \quad
     T_{\Sigma}^{-1}( \phi) = (\phi_{|\Gamma_{1}}, \phi_{|\Gamma_{2}} ).
  \label{3.2.A}
\end{equation*}
\end{itemize}
\end{prop}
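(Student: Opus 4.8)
The plan is to verify the two assertions directly from the definition of $T_\Sigma$ together with the description of $\Gamma_1$, $\Gamma_2$ and the function $\tau$ as a two-valued object on $\Sigma$ whose two branches over $\dot{\mathbb{R}}$ are $\pm\rho$. Recall that $\Gamma = \Gamma_1 \cup \Gamma_2$ with $\Gamma_j \subset \Sigma_j$, and that on $\Sigma_1$ (resp. $\Sigma_2$) the coordinate $\tau$ equals $\rho$ (resp. $-\rho$), where $\rho$ is the branch of $\sqrt{\mathfrak p}$ with $\operatorname{Re}\rho \ge 0$ fixed in section 2. For part (i), take $\varphi_1,\varphi_2 \in C_\mu(\dot{\mathbb{R}})$ and consider the pair $(\psi_1,\psi_2) = (\varphi_1 + \rho\varphi_2,\ \varphi_1 - \rho\varphi_2)$. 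By definition $T_\Sigma(\psi_1,\psi_2)$ is the function on $\Gamma$ which equals $\psi_1$ on $\Gamma_1$ and $\psi_2$ on $\Gamma_2$. Now the function $\varphi_1 + \tau\varphi_2 \in C_\mu(\Gamma)$ (here $\varphi_1,\varphi_2$ are identified with their pullbacks via $\Pi$, which are $*$-invariant, while $\tau$ restricts to $\pm\rho$ on the two components): its restriction to $\Gamma_1$ is $\varphi_1 + \rho\varphi_2 = \psi_1$ and its restriction to $\Gamma_2$ is $\varphi_1 - \rho\varphi_2 = \psi_2$. Hence $T_\Sigma(\psi_1,\psi_2) = \varphi_1 + \tau\varphi_2$, which is exactly the claim. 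One should also note in passing that $\varphi_1 + \tau\varphi_2$ so defined does lie in $C_\mu(\Gamma)$, since $\varphi_1$, $\varphi_2$ and the restrictions of $\tau$ to each $\Gamma_j$ are Hölder continuous there (this is consistent with the decomposition $f = f_{\mathcal E} + \tau f_{\mathcal O}$ recalled in section 2, with $f_{\mathcal E} = \varphi_1$, $f_{\mathcal O} = \varphi_2$).

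For part (ii), the map $R\colon C_\mu(\Gamma) \to C_\mu(\dot{\mathbb{R}})^2$, $R(\phi) = (\phi_{|\Gamma_1},\phi_{|\Gamma_2})$, is well defined because $\Pi$ restricts to a homeomorphism from each $\Gamma_j$ onto $\dot{\mathbb{R}}$ preserving Hölder continuity, so $\phi_{|\Gamma_j}$, transported to $\dot{\mathbb{R}}$ via that homeomorphism, indeed lies in $C_\mu(\dot{\mathbb{R}})$. It is immediate from the definitions that $R \circ T_\Sigma = \mathrm{id}$ on $C_\mu(\dot{\mathbb{R}})^2$: applying $T_\Sigma$ to $(\varphi_1,\varphi_2)$ produces the function on $\Gamma$ that restricts to $\varphi_j$ on $\Gamma_j$, and reading off those two restrictions returns $(\varphi_1,\varphi_2)$. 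Conversely $T_\Sigma \circ R = \mathrm{id}$ on $C_\mu(\Gamma)$: since $\Gamma = \Gamma_1 \sqcup \Gamma_2$ is a disjoint union of the two closed paths, a function $\phi \in C_\mu(\Gamma)$ is completely determined by the pair of its restrictions, so rebuilding it from $(\phi_{|\Gamma_1},\phi_{|\Gamma_2})$ via $T_\Sigma$ recovers $\phi$. Thus $T_\Sigma$ is a bijection with inverse $R = T_\Sigma^{-1}$, and since $T_\Sigma$ and $R$ are clearly linear, this proves (ii).

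There is essentially no analytical obstacle here: the only point requiring a word of care is the bookkeeping of the sign of $\tau$ on the two sheets — that is, remembering that $\tau_{|\Gamma_1} = \rho$ while $\tau_{|\Gamma_2} = -\rho$, which is what makes the matching $(\varphi_1 + \rho\varphi_2,\ \varphi_1 - \rho\varphi_2) \mapsto \varphi_1 + \tau\varphi_2$ come out correctly in part (i). Everything else is a direct unravelling of definitions, using only that $\Gamma$ is the disjoint union $\Gamma_1 \cup \Gamma_2$ and that $\Pi$ identifies each component with $\dot{\mathbb{R}}$ in a way compatible with the Hölder structure. Consequently the proof is short; its role in the paper is to set up the dictionary between vectorial objects on $\dot{\mathbb{R}}$ and scalar objects on $\Gamma \subset \Sigma$ that will be used, via $T_\Sigma$, to translate the vectorial RH problem \eqref{6.10.1} into a scalar RH problem on the Riemann surface.
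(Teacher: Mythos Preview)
Your proof is correct and is precisely the direct verification from definitions that the paper has in mind: the paper does not give a proof of this proposition but simply states that it ``is easy to see'' (referring to \cite{CSS}), and your unravelling of the definition of $T_\Sigma$ together with the identification $\tau_{|\Gamma_1}=\rho$, $\tau_{|\Gamma_2}=-\rho$ is exactly that easy verification.
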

By diagonalizing $G$ and taking $\phi_{\pm}=(\phi_{1\pm},\phi_{2\pm})$ we can rewrite (\ref{6.10.1}) in the equivalent form
\begin{equation}
           \left\{\begin{array}{c}
                    \hspace{-0.3cm}g_{1}(\phi_{1+}+\frac{\rho}{\mathfrak{p_{1}}}\phi_{2+})= \frac{\rho}{\mathfrak{p_{1}}}(\phi_{2-}+\frac{\rho}{\mathfrak{p_{2}}}\phi_{1-})\\
                    g_{2}(\phi_{1+}-\frac{\rho}{\mathfrak{p_{1}}}\phi_{2+})= -\frac{\rho}{\mathfrak{p_{1}}}(\phi_{2-}-\frac{\rho}{\mathfrak{p_{2}}}\phi_{1-})
                  \end{array}\right.
\label{6.10}
\end{equation}
where  $g_{1}$ and $g_{2}$ are the eigenvalues $g_{1}=\alpha+\rho\delta\mathfrak{p}_{2}^{-1}, \quad g_{2}=\alpha-\rho\delta\mathfrak{p}_{2}^{-1}$
for $\rho=\sqrt{\mathfrak{p}}$ defined as in Section 2. It follows from Proposition \ref{prop6.1} in \cite{CSS} (see also \cite{MEISTER}) that (\ref{6.10}) is equivalent  to the scalar RH problem relative to $\Gamma$ in $\Sigma$
\begin{equation}
           g\psi_{+}=\frac{\tau}{\mathfrak{p}_{1}}\psi_{-}\text{ with }, \quad \psi_+ \in C_{\mu}^+(\Gamma), \ \psi_-\in C_{\mu 0}^-(\Gamma),
\label{6.12}
\end{equation} where $C_{\mu 0}^-(\Gamma)=(\lambda_+/\lambda_-)C_{\mu}^-(\Gamma)$ and
\begin{equation}g=T_{\Sigma}(g_{1}, g_{2})=\alpha+\frac{\tau}{\mathfrak{p}_{2}}\delta\label{6.12A}\end{equation}
 will be called the \emph{$\Sigma$-symbol of $G$}.

Multiplying $G$ by a rational factor $(\lambda_{-}/\lambda_{+})^{-m/2}$ if $m$ is even, $(\lambda_{-}/\lambda_{+})^{-(m-1)/2}$ if $m$ is odd, we obtain a matrix which also satisfies (\ref{6.1}) but whose determinant admits a bounded factorization of the form (\ref{6.2}) with $m=0$ or $m=1$. Thus we assume in the results that follow that $m \in \{0,1\}$ in (\ref{6.2}) and we will consider separately the cases where $m=0$ and $m=1$.
\begin{thm}\label{theo6.1}
Let $G \in C(Q)$ be such that $m=0$ in (\ref{6.2}) and let $g$ be its $\Sigma$-symbol. Then the following propositions are equivalent:
\begin{itemize}
\item[(i)]The RH problem (\ref{6.10.1}) admits non-zero solutions.
  \item[(ii)] The RH problem (\ref{6.12}) admits non-zero solutions.
  \item[(iii)] The $\Sigma$-symbol of $G$ admits a holomorphic $\Sigma$-factorization
  \begin{equation}
           g=g_{-}r_{0}g_{+} \quad \text{with } r_{0}=\frac{\tau}{(\xi+i)(\xi-ik_{0})}
\label{6.13}
\end{equation}
\item[(iv)]$\beta_{g}=\frac{k_{0}}{2\pi}\int_{\Gamma}\frac{\log g}{\tau}d\xi=2nK+iK' \mod L$, where $n=\ind_{1}g=-\ind_{2}g$.
\end{itemize}
\end{thm}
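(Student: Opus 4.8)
The plan is to establish a cycle of implications, exploiting the equivalences already built up in Section 6 together with the characterization of the rational middle factor $r_0$ via the integral $\beta_f$. First I would recall that, by the construction of the $\Sigma$-symbol $g = T_\Sigma(g_1,g_2)$ and Proposition~\ref{prop6.1}, the vectorial RH problem (\ref{6.10.1}) — equivalently (\ref{6.10}) — is equivalent to the scalar RH problem (\ref{6.12}) on $\Gamma$; this gives (i)$\Leftrightarrow$(ii) directly, with no further work. The substance of the theorem is therefore the equivalence of (ii), (iii) and (iv).

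For (iii)$\Rightarrow$(ii): given a holomorphic $\Sigma$-factorization $g = g_- r_0 g_+$ with $r_0 = \tau/[(\xi+i)(\xi-ik_0)]$, I would substitute into (\ref{6.12}) and rewrite it as $r_0\, (g_+\psi_+) = (\tau/\mathfrak{p}_1)\, g_-^{-1}\psi_-$. Since $\mathfrak{p}_1(\xi) = (\xi+i)(\xi+ik_0)$, the factor $\tau/\mathfrak{p}_1$ differs from $r_0$ only by the rational unit $(\xi-ik_0)/(\xi+ik_0)$, which is $*$-invariant; absorbing this and the factors $g_\pm$ into new unknowns, the problem reduces to finding a function analytic in $\Sigma^+$ whose product with a rational function having a genuine zero in $\Sigma^+$ (coming from $r_0$, whose zeros/poles were described after Theorem~\ref{theo3.5}: $z_0 = -ik_0$, so one zero lies in $\Sigma^-$ and the pole structure forces a nontrivial kernel) matches a function analytic in $\Sigma^-_0$. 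One then exhibits an explicit nonzero $\psi_+$ — essentially $\psi_+ = g_+^{-1}\cdot(\text{the rational function realizing the zero of }r_0)$ — and checks it lies in $C_\mu^+(\Gamma)$ with the companion $\psi_-$ in $C_{\mu 0}^-(\Gamma)$; here I would lean on the precise divisor of $r_0$ and the Behnke--Stein decomposition from Section~2. The reverse direction (ii)$\Rightarrow$(iii), or more efficiently (ii)$\Rightarrow$(iv), is the place where the homogeneous problem having a solution is translated back into a constraint on the factorization type of $g$: a nonzero solution of (\ref{6.12}) forces the middle factor of any holomorphic $\Sigma$-factorization of $g$ to be nonconstant, and by Theorem~\ref{theo.4.2} the exponent $k$ and the value $\tilde\beta_g$ are determined by $\beta_g \bmod L$; one must show the only possibility consistent with the degree-one, single-zero structure imposed by the operator $\tau/\mathfrak{p}_1$ in (\ref{6.12}) is exactly $r_\nu = r_0$, i.e. $\beta = iK'$ as noted after Theorem~\ref{theo3.5}.

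The equivalence (iii)$\Leftrightarrow$(iv) is essentially Remark~\ref{rmk4.6} (the generalization of Corollary~\ref{cor4.3} to the case $\ind_\Gamma g = 0$ with $\ind_1 g = n = -\ind_2 g$): $g$ admits a holomorphic $\Sigma$-factorization with middle factor $\tau/[(\xi+i)(\xi-ik_0)]$ if and only if $\beta_g = 2nK + iK' \bmod L$. I would simply invoke this, after first checking the hypothesis $\ind_\Gamma g = 0$: since $m = 0$ in (\ref{6.2}) and $\det G = g_1 g_2$ relates to $g\, g_*$ via (\ref{6.12A}), one has $\ind_1 g + \ind_2 g = \ind(\det G \cdot \text{rational unit}) = 0$, so indeed $n := \ind_1 g = -\ind_2 g$ is well defined. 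Assembling the pieces: (i)$\Leftrightarrow$(ii) is Proposition~\ref{prop6.1} plus the reduction of Section~6; (ii)$\Rightarrow$(iv) uses Theorem~\ref{theo.4.2} to read off that a nontrivial kernel forces $\tilde\beta_g = iK' \bmod L$ up to the shift $2nK$; (iv)$\Rightarrow$(iii) is Remark~\ref{rmk4.6}; and (iii)$\Rightarrow$(ii) is the explicit construction of a solution above.

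\textbf{Main obstacle.} The delicate step is (ii)$\Rightarrow$(iii)/(iv): showing that the \emph{mere existence} of a nonzero solution of the scalar problem (\ref{6.12}) pins down the middle factor to be precisely $r_0$ and not some other $r_\nu^k$ with $k \geq 1$ or a special $\Sigma$-factorization. This requires a careful count of zeros and poles in $\Sigma^\pm$ — one must rule out, using the fixed form $\tau/\mathfrak{p}_1$ of the coefficient in (\ref{6.12}) and the constraint $\psi_- \in C_{\mu 0}^-(\Gamma)$ (which carries the extra factor $\lambda_+/\lambda_-$), that a factorization with a higher-order or differently-placed rational factor could also produce a solution, or conversely that $\beta_g = 2nK \bmod L$ (the $M$-special / special case) yields only the trivial solution. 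I expect this to hinge on the explicit period computations of Theorems~\ref{lemma3.2} and \ref{theo3.5} and on tracking the single pole at the branch point $-i$ that appears in $P_\Gamma^-$ (Proposition on $P_\Gamma^\pm$, part (iv)).
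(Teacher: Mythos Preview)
Your handling of (i)$\Leftrightarrow$(ii), (iii)$\Rightarrow$(ii), and (iii)$\Leftrightarrow$(iv) matches the paper: the first is the equivalence built into the construction of the $\Sigma$-symbol, the second is an explicit solution (the paper takes $\psi_+ = [(\xi+i)/(\xi+ik_0)]g_+^{-1}$, $\psi_- = [(\xi+i)/(\xi-ik_0)]g_-$), and the third is exactly Remark~\ref{rmk4.6} after noting that $m=0$ forces $\ind_1 g = -\ind_2 g$.

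The gap is in (ii)$\Rightarrow$(iii), which you correctly flag as the crux but attack by the wrong route. You propose to start from an arbitrary holomorphic $\Sigma$-factorization $g = g_- r_\nu^k g_+$ (Theorem~\ref{theo.4.2}) and then argue by a zero/pole count that only $r_\nu = r_0$ is compatible with a nontrivial kernel. A divisor argument of this flavour does appear later (in the proof of Theorem~\ref{theo6.5}), but here it is considerably more awkward --- you would have to rule out $k=0,2,3$ and every $\nu\neq 0$ separately --- and you have not actually carried it out.

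The paper's argument for (ii)$\Rightarrow$(iii) bypasses the classification of middle factors entirely. Given a nonzero solution $(\psi_+,\psi_-)$ of (\ref{6.12}), set $\tilde\psi_- = (\lambda_-/\lambda_+)\psi_- \in C_\mu^-(\Gamma)$, so that $g\psi_+ = \tau[(\xi-i)(\xi+ik_0)]^{-1}\tilde\psi_-$. Apply the involution $*$ to both sides and \emph{multiply} the two equations. Since $gg_* = g_1 g_2 = \det G = \gamma_-\gamma_+$ (this is exactly where $m=0$ is used), the $\tau$'s cancel and one obtains
\[
\gamma_+\,\frac{\xi+ik_0}{\xi+i}\,\psi_+(\psi_+)_* \;=\; -\,\gamma_-^{-1}\,\frac{\xi-ik_0}{\xi-i}\,\tilde\psi_-(\tilde\psi_-)_*.
\]
Both sides are $*$-invariant, hence identified with functions on $\dot{\mathbb{R}}$; the left side lies in $C_\mu^+(\dot{\mathbb{R}})$ and the right in $C_\mu^-(\dot{\mathbb{R}})$, so by Liouville each equals a nonzero constant. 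Therefore $\psi_+$ and $\tilde\psi_-$ are bounded away from zero in $\Sigma^+$ and $\Sigma^-$, hence invertible there, and the factorization (\ref{6.13}) with $g_+ = [(\xi+i)/(\xi+ik_0)]\psi_+^{-1}$, $g_- = [(\xi-ik_0)/(\xi-i)]\tilde\psi_-$ falls out immediately. The missing idea in your proposal is this multiply-by-the-$*$-conjugate trick, which collapses the problem to a scalar Liouville argument on $\dot{\mathbb{R}}$ using the known canonical factorization of $\det G$; no appeal to Theorem~\ref{theo.4.2} or period computations is needed for this implication.
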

\begin{proof} (i)$\Leftrightarrow$(ii) since $(\phi_{+}, \phi_{-})$ is a solution to (\ref{6.10.1}) if and only if \linebreak$\psi_{+}=\phi_{1+}+(\tau/\mathfrak{p_{1}})\phi_{2+}$, $\psi_{-}=\phi_{2-}+(\tau/\mathfrak{p_{2}})\phi_{1-}$ satisfy (\ref{6.12}).

(ii)$\Rightarrow$(iii) If (ii) is true and $(\psi_+,\psi_-)$ is a non-zero solution to (\ref{6.12}), then we have
\begin{equation}
           g\psi_{+}=\frac{\tau}{(\xi-i)(\xi+ik_{0})}\widetilde{\psi}_{-}
\label{6.14}
\end{equation} with $\widetilde{\psi}_{-}=(\xi-i)/(\xi+i)\psi_{-} \in C_{\mu}^-(\Gamma)$, since $\psi_{-}\in C_{\mu 0}^-(\Gamma)$. Applying the involution * to both sides of (\ref{6.14}) and multiplying, we obtain
$$gg_*\psi_{+}(\psi_{+})_*=-\frac{(\xi+i)(\xi-ik_{0})}{(\xi-i)(\xi+ik_{0})}\widetilde{\psi}_{-}(\widetilde{\psi}_{-})_*.$$
Taking into account that $gg_{*}=g_{1}g_{2}=\det
G=\gamma_{-}\gamma_{+}$, we have thus
\begin{equation}
      \gamma_{+}\frac{\xi+ik_{0}}{\xi+i}\psi_{+}(\psi_{+})_*=-\gamma_{-}^{-1}\frac{\xi-ik_{0}}{\xi-i}\widetilde{\psi}_{-}(\widetilde{\psi}_{-})_*
\label{6.15}
\end{equation}
and, since the left and the right hand sides of (\ref{6.15}) can be identified with functions in $C_{\mu}^+(\dot{\mathbb{R}})$ and $C_{\mu}^-(\dot{\mathbb{R}})$, respectively, both sides must be equal to  $c \in \mathbb{C}\backslash\{0\}$. Thus $\psi_{+}$ and $\widetilde{\psi}_{-}$ are bounded away from zero in $\Sigma^{+}$ and $\Sigma^{-}$, respectively, and therefore their inverses are also in $C_{\mu}^+(\Gamma)$ and $C_{\mu}^-(\Gamma)$, respectively. From (\ref{6.14}) we obtain then
$$g=\widetilde{\psi}_{-}\frac{\tau}{(\xi-i)(\xi+ik_{0})}\psi_{+}^{-1}=\left(\widetilde{\psi}_{-}
\frac{\xi-ik_{0}}{\xi-i}\right)\frac{\tau}{(\xi+i)(\xi-ik_{0})}\left(\frac{\xi+i}{\xi+ik_{0}}\psi_{+}^{-1}\right)$$
and we can take $g_{-}=
[(\xi-ik_{0})/(\xi-i)]\widetilde{\psi}_{-}$, $g_{+}=[(\xi+i)/(\xi+ik_{0})]\psi_{+}^{-1}$.

(iii)$\Rightarrow$(ii) It is enough to take
$\psi_{+}=[(\xi+i)/(\xi+ik_{0})]g_{+}^{-1}$,\linebreak
$\psi_{-}=[(\xi+i)/(\xi-ik_{0})]g_{-}$.

(iii)$\Leftrightarrow$(iv) Since $m=0$, we must
have $\ind_{1}g=\ind g_{1}=n$ and $\ind_{2}g=\ind g_{2}=-n$ for some $n \in \mathbb{Z}$
and the equivalence follows as in Remark \ref{rmk4.6}.
\end{proof}
\begin{thm}\label{theo6.2}
 Let the assumptions of Theorem \ref{theo6.1} hold and let the $\Sigma$-symbol $g$ admit a factorization (\ref{6.13}). Then the space of solutions $(\psi_{+},\psi_{-})$ of (\ref{6.12}) is generated by $([(\xi+i)/(\xi+\nolinebreak ik_{0})]g_{+}^{-1}, [(\xi+\nolinebreak i)/(\xi-\nolinebreak ik_{0})]g_{-})$ and the space of solutions $(\phi_{+}, \phi_{-})$ to the RH problem (\ref{6.10.1}) is generated by $(\Phi_{+},\Phi_{-})$ with
\begin{equation}\Phi_{+}=\left(\frac{\xi+i}{\xi+ik_{0}}(g_{+}^{-1})_{\mathcal{E}}, \ (\xi+i)^2(g_{+}^{-1})_{\mathcal{O}}\right),\label{6.16A}\end{equation}
\begin{equation}\Phi_{-}=\left((\xi^2+1)(g_{-})_{\mathcal{O}}, \ \frac{\xi+i}{\xi-ik_{0}}(g_{-})_{\mathcal{E}}\right).
\label{6.16B}\end{equation}
\end{thm}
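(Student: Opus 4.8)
The plan is to establish the two assertions separately: first that the solution space of the scalar problem (\ref{6.12}) on $\Sigma$ is one-dimensional, generated by the pair already produced in the proof of Theorem \ref{theo6.1}, and then that transporting this generator back along the chain of equivalences (\ref{6.10.1})$\Leftrightarrow$(\ref{6.10})$\Leftrightarrow$(\ref{6.12}) yields precisely (\ref{6.16A})--(\ref{6.16B}).

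For the first assertion I would start from the remark, contained in the implication (iii)$\Rightarrow$(ii) of Theorem \ref{theo6.1}, that the pair $\psi_{+}=[(\xi+i)/(\xi+ik_{0})]g_{+}^{-1}$, $\psi_{-}=[(\xi+i)/(\xi-ik_{0})]g_{-}$ is a non-zero solution of (\ref{6.12}) lying in $C_{\mu}^{+}(\Gamma)\times C_{\mu 0}^{-}(\Gamma)$. To see that every solution is a constant multiple of this one, take an arbitrary non-zero solution $(\psi_{+},\psi_{-})$; since $g$ and $\tau/\mathfrak{p}_{1}$ do not vanish on $\Gamma$, neither $\psi_{+}$ nor $\psi_{-}$ vanishes identically, and the argument used in (ii)$\Rightarrow$(iii) of Theorem \ref{theo6.1} shows that $\psi_{+}$ and $\widetilde{\psi}_{-}=[(\xi-i)/(\xi+i)]\psi_{-}$ are bounded away from zero in $\overline{\Sigma^{+}}$ and $\overline{\Sigma^{-}}$ respectively, so that their inverses lie in $C_{\mu}^{+}(\Gamma)$, resp. $C_{\mu}^{-}(\Gamma)$. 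Hence, given two non-zero solutions, dividing the corresponding instances of (\ref{6.12}) (the common factor $(\tau/\mathfrak{p}_{1})(\xi+i)/(\xi-i)$ cancels on the right) produces an equality on $\Gamma$ between a quotient in $\mathcal{G}C_{\mu}^{+}(\Gamma)$ and one in $\mathcal{G}C_{\mu}^{-}(\Gamma)$. A function extending analytically to $\Sigma^{+}$ and to $\Sigma^{-}$ and continuous across $\Gamma$ extends holomorphically to the compact surface $\Sigma$ and is therefore constant --- the same principle that yields uniqueness of special $\Sigma$-factorizations --- so the two solutions differ by a constant. (Alternatively, one can appeal directly to the uniqueness, up to a non-zero constant, of the holomorphic $\Sigma$-factorization of $g$ with prescribed middle factor $r_{0}$.) Thus the solution space of (\ref{6.12}) is one-dimensional with the stated generator.

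For the second assertion I would use that diagonalizing $G$ carries (\ref{6.10.1}) into (\ref{6.10}) and that, via $T_{\Sigma}$ (Proposition \ref{prop6.1}), (\ref{6.10}) is identified with (\ref{6.12}) through $\psi_{+}=\phi_{1+}+(\tau/\mathfrak{p}_{1})\phi_{2+}$ and $\psi_{-}=\phi_{2-}+(\tau/\mathfrak{p}_{2})\phi_{1-}$; both passages are linear bijections between solution sets, so the solution space of (\ref{6.10.1}) is one-dimensional, generated by the preimage $(\Phi_{+},\Phi_{-})$ of the generator of (\ref{6.12}). To write that preimage down I would invert the two substitutions using the unique splitting $f=f_{\mathcal{E}}+\tau f_{\mathcal{O}}$: since $(\xi+i)/(\xi+ik_{0})$ and $(\xi+i)/(\xi-ik_{0})$ are $*$-invariant, the generator gives $\phi_{1+}=(\psi_{+})_{\mathcal{E}}=[(\xi+i)/(\xi+ik_{0})](g_{+}^{-1})_{\mathcal{E}}$, $\phi_{2+}=\mathfrak{p}_{1}(\psi_{+})_{\mathcal{O}}=(\xi+i)^{2}(g_{+}^{-1})_{\mathcal{O}}$, $\phi_{2-}=(\psi_{-})_{\mathcal{E}}=[(\xi+i)/(\xi-ik_{0})](g_{-})_{\mathcal{E}}$ and $\phi_{1-}=\mathfrak{p}_{2}(\psi_{-})_{\mathcal{O}}=(\xi^{2}+1)(g_{-})_{\mathcal{O}}$, which are exactly (\ref{6.16A})--(\ref{6.16B}).

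The routine but somewhat delicate step, and the one I expect to require the most care, is verifying that these four components actually lie in the required spaces $(C_{\mu}^{+}(\dot{\mathbb{R}}))^{2}$ and $(C_{\mu 0}^{-}(\dot{\mathbb{R}}))^{2}$. For the even parts it is immediate: $(g_{+}^{-1})_{\mathcal{E}}$ and $(g_{-})_{\mathcal{E}}$ are $*$-invariant and analytic in $\Sigma^{\pm}$, hence identify with functions analytic and --- by compactness of $\overline{\Sigma^{\pm}}$ --- bounded in $\mathbb{C}^{\pm}$, and the rational prefactors have no poles in $\overline{\mathbb{C}^{\pm}}$; note moreover that the factor $\xi+i$ in $\phi_{2-}$ supplies the zero at $-i$ demanded by membership in $C_{\mu 0}^{-}(\dot{\mathbb{R}})$. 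For the odd parts one must first observe that $(g_{+}^{-1})_{\mathcal{O}}$ and $(g_{-})_{\mathcal{O}}$ are analytic also at the branch points $i,ik_{0}$ and $-i,-ik_{0}$ --- the numerator $f-f_{*}$ vanishing there because these are fixed points of $*$ --- and then invoke the remark following (\ref{2.6}), together with $|\lambda_{+}|=|\lambda_{-}|$ on $\mathbb{R}$, to get $\lambda_{+}^{2}(g_{+}^{-1})_{\mathcal{O}},\ \lambda_{-}^{2}(g_{-})_{\mathcal{O}}\in C_{\mu}(\dot{\mathbb{R}})$; since $\phi_{2+}=\lambda_{+}^{2}(g_{+}^{-1})_{\mathcal{O}}$ and $\phi_{1-}=(\lambda_{+}/\lambda_{-})\lambda_{-}^{2}(g_{-})_{\mathcal{O}}$, this places them in $C_{\mu}^{+}(\dot{\mathbb{R}})$ and $C_{\mu 0}^{-}(\dot{\mathbb{R}})$ respectively, the behaviour at the point at infinity (which lies on $\Gamma$) being controlled by the same $\lambda_{\pm}$-weights.
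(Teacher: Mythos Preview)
Your argument is correct. The second half (transporting the generator back via $T_\Sigma^{-1}$ and the even/odd splitting) is exactly what the paper does, and your additional verification that the four components land in the right H\"older spaces is a welcome sanity check that the paper leaves implicit.

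For the first half the paper proceeds somewhat differently, and it is worth noting the contrast. Rather than taking two solutions and forming their quotient, the paper substitutes the factorization (\ref{6.13}) directly into (\ref{6.12}) to obtain
\[
g_{+}\psi_{+}=g_{-}^{-1}\,\frac{\xi-ik_{0}}{\xi+ik_{0}}\,\psi_{-},
\]
observes that the common value must be a rational function on $\Sigma$ whose only possible pole is a double pole at the branch point $-ik_{0}$ (from the left) and which is forced to have a double zero at the branch point $-i$ (from $\psi_{-}\in C_{\mu 0}^{-}(\Gamma)$ on the right), and concludes that it equals $c\,(\xi+i)/(\xi+ik_{0})$ for some $c\in\mathbb{C}$. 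This single Liouville-plus-divisor step gives existence and uniqueness simultaneously, without invoking the invertibility of $\psi_{+},\widetilde{\psi}_{-}$ established in Theorem~\ref{theo6.1}. Your route---first lifting each non-zero solution to a holomorphic $\Sigma$-factorization via the (ii)$\Rightarrow$(iii) argument, then using uniqueness of such factorizations---is slightly less direct but has the virtue of making the connection with special $\Sigma$-factorization (and its uniqueness up to constants) explicit. Both arguments ultimately rest on the same principle: a function in $C_\mu^{+}(\Gamma)\cap C_\mu^{-}(\Gamma)$ extends holomorphically to the compact surface $\Sigma$ and is therefore constant.
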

\begin{proof}
From (\ref{6.13}) we have
$$g\psi_{+}=\frac{\tau}{\mathfrak{p}_{1}}\psi_{-}\Leftrightarrow g_{+}\psi_{+}=g_{-}^{-1}\frac{\xi-ik_{0}}{\xi+ik_{0}}\psi_{-}.$$
Both sides of the latter equality must be equal to a rational function with (at most) a double pole at the branch point $-ik_{0}$ and a double zero at the branch point $-i$ (due to $\psi_{-}\in C_{\mu 0}^-(\Gamma)$). Thus
$$g_{+}\psi_{+}=g_{-}^{-1}\frac{\xi-ik_{0}}{\xi+ik_{0}}\psi_{-}=c \ \frac{\xi+i}{\xi+ik_{0}}, \quad c \in \mathbb{C}$$
and therefore
\begin{equation}\hspace{-0.5cm}\psi_{+}=c \ \frac{\xi+i}{\xi+ik_{0}}g_{+}^{-1}, \quad  \psi_{-}=c \ \frac{\xi+i}{\xi-ik_{0}}g_{-}, \quad c \in \mathbb{C}
\label{6.17}\end{equation}
give all the solutions to (\ref{6.12}).

The solutions to the RH problem (\ref{6.10.1}) can be
obtained from (\ref{6.17}) using the equivalence with (\ref{6.12})
(\cite{CSS}) which implies that
$\phi_{\pm}=(\phi_{1\pm},\phi_{2\pm})$ satisfy
(\ref{6.10.1}) if and only if
$$(\phi_{1+}+\frac{\rho}{\mathfrak{p}_{1}}\phi_{2+}, \ \phi_{1+}-\frac{\rho}{\mathfrak{p}_{1}}\phi_{2+})=T_{\Sigma}^{-1}\psi_{+}$$

$$=c \ \frac{\xi+i}{\xi+ik_{0}}\left((g_{+}^{-1})_{\mathcal{E}}+\rho(g_{+}^{-1})_{\mathcal{O}}, \  (g_{+}^{-1})_{\mathcal{E}}-\rho(g_{+}^{-1})_{\mathcal{O}}\right),$$

$$(\phi_{2-}+\frac{\rho}{\mathfrak{p}_{2}}\phi_{1-}, \ \phi_{2-}-\frac{\rho}{\mathfrak{p}_{2}}\phi_{1-})=T_{\Sigma}^{-1}\psi_{-}$$

$$\hspace{-0.3cm}=c \ \frac{\xi+i}{\xi-ik_{0}}\left((g_{-})_{\mathcal{E}}+\rho(g_{-})_{\mathcal{O}}, \  (g_{-})_{\mathcal{E}}-\rho(g_{-})_{\mathcal{O}}\right).$$
Thus we obtain $\Phi_{+}, \Phi_{-}$ given by (\ref{6.16A}), (\ref{6.16B}).
\end{proof}
\begin{rem}\label{rmk6.4}
The RH problem (\ref{6.12}) can also be studied, with the same assumptions as in Theorem \ref{theo6.2}, in a different setting, looking for solutions $\psi^{\pm}$ in $C_{\mu}^{\pm}(\Gamma)$ (see \cite{CSS}). It is easy to see, following the same reasoning as in the previous proof, that in that case the space of solutions is isomorphic to the space $L(-D)$ of meromorphic functions with poles bounded by the divisor $-D$ (\cite{Miranda}), where $D=$div$\left[(\xi-ik_{0})/(\xi+ik_{0})\right]_{|\Sigma^{-}}$, and thus its dimension is 2.
\end{rem}
As an immediate consequence of Theorems \ref{theo6.1} and \ref{theo6.2}, we conclude that $\ker T_G=\{0\}$ unless condition (iv) in Theorem \ref{theo6.1} is satisfied, in which case $\dim \ker T_G=1$ and $\ker T_G$ is generated by $\lambda_+^{-1}\Phi_+$ with $\Phi_+$ defined by (\ref{6.16A}). Therefore we can establish necessary and sufficient conditions for existence of a canonical factorization for $G$ (and invertibility of $T_G$), and determine the partial indices in the non-canonical case.
\begin{cor}\label{cor6.3}
With the same assumptions as in Theorem \ref{theo6.1}, $G$ admits a canonical bounded factorization unless $g$ admits a factorization (\ref{6.13}); in that case $G$ admits a non-canonical bounded factorization with partial indices $\pm 1$.
\end{cor}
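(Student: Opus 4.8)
The plan is to read off everything directly from Theorems~\ref{theo6.1} and~\ref{theo6.2} together with the general theory of Toeplitz operators recalled earlier in this section. The key observation is the dichotomy established in Theorem~\ref{theo6.1}: either the $\Sigma$-symbol $g$ admits the particular holomorphic $\Sigma$-factorization~(\ref{6.13}) with middle factor $r_0=\tau/[(\xi+i)(\xi-ik_0)]$, or it does not, and this in turn is equivalent (via (i)$\Leftrightarrow$(ii) of Theorem~\ref{theo6.1}) to the existence of non-zero solutions of the homogeneous RH problem~(\ref{6.10.1}). Recall from the discussion around~(\ref{6.8.1})--(\ref{6.9.1}) that, under the standing assumption $m=0$, one has partial indices $k_1=-k\le 0$, $k_2=k$, and that $k=\dim\ker T_G$ equals the dimension of the space of solutions to~(\ref{6.9.1}), hence to~(\ref{6.10.1}) after the substitution $\phi_\pm=\lambda_\pm\varphi_\pm$.

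First I would treat the case where $g$ does \emph{not} admit a factorization~(\ref{6.13}). By Theorem~\ref{theo6.1}, (iii) fails, hence (i) fails, so~(\ref{6.10.1}) has only the trivial solution; thus $k=\dim\ker T_G=0$, which forces $k_1=k_2=0$ and $G=G_-G_+$ is a canonical bounded factorization (equivalently, $T_G$ is invertible). Conversely, if $g$ \emph{does} admit a factorization~(\ref{6.13}), then by Theorem~\ref{theo6.2} the solution space of~(\ref{6.10.1}) is one-dimensional, generated by $(\Phi_+,\Phi_-)$ with $\Phi_\pm$ given by~(\ref{6.16A})--(\ref{6.16B}); hence $k=\dim\ker T_G=1$, so the partial indices are $k_1=-1$, $k_2=1$, i.e. $\pm 1$, and the factorization is non-canonical. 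This establishes the ``unless'' statement in both directions and simultaneously identifies the partial indices in the non-canonical case.

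The only point requiring a word of care — and the closest thing to an obstacle — is the passage from the one-dimensionality of the solution space of the \emph{scalar} RH problem~(\ref{6.12}) to the one-dimensionality of $\ker T_G$, i.e. making sure the chain of equivalences (\ref{6.10.1})$\Leftrightarrow$(\ref{6.10})$\Leftrightarrow$(\ref{6.12}) is dimension-preserving and that the $\lambda_\pm$-rescaling $\phi_\pm=\lambda_\pm\varphi_\pm$ between~(\ref{6.9.1}) and~(\ref{6.10.1}) is a linear isomorphism of solution spaces. All of this is already contained in the cited equivalences from~\cite{CSS} and in Proposition~\ref{prop6.1}, together with the computation in the proof of Theorem~\ref{theo6.2} exhibiting $(\psi_+,\psi_-)$ explicitly; so no genuinely new argument is needed. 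I would therefore phrase the proof as a short deduction, simply citing Theorems~\ref{theo6.1} and~\ref{theo6.2} and the remarks made immediately before the statement of the corollary, and noting that invertibility of $T_G$ is equivalent to $k_1=k_2=0$ as recalled after~(\ref{6.8.1}).
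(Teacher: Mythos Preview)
Your proposal is correct and follows exactly the route the paper takes: the corollary is stated there as an immediate consequence of Theorems~\ref{theo6.1} and~\ref{theo6.2} together with the remarks (just before the corollary) identifying $\dim\ker T_G$ with the partial index $k$ when $m=0$. Your careful check that the chain of equivalences is dimension-preserving is a nice gloss, but no additional argument beyond what you outline is needed.
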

Now we use Theorem \ref{theo6.1} to study the same problems when $m=1$ in (\ref{6.2}).
\begin{thm}\label{theo6.5}
Let $G \in C(Q)$ be such that $m=1$ in (\ref{6.2})  and let $g$ be its $\Sigma$-symbol. Then the RH problems (\ref{6.10.1}) and (\ref{6.12}) do not admit non-zero solutions and $G$ admits a non-canonical bounded factorization with partial indices $0$ and $1$.
\end{thm}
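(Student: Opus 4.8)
The plan is to reduce the whole statement to showing that the scalar RH problem (\ref{6.12}) admits only the trivial solution. The equivalence (i)$\Leftrightarrow$(ii) from the proof of Theorem~\ref{theo6.1} — that $(\phi_{+},\phi_{-})$ solves (\ref{6.10.1}) if and only if $\psi_{+}=\phi_{1+}+(\tau/\mathfrak{p}_1)\phi_{2+}$, $\psi_{-}=\phi_{2-}+(\tau/\mathfrak{p}_2)\phi_{1-}$ solve (\ref{6.12}) — uses only Proposition~\ref{prop6.1} and is valid for $m=1$ as well, so the non-existence of non-zero solutions of (\ref{6.12}) immediately gives the same for (\ref{6.10.1}). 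Moreover, by (\ref{6.6.1})--(\ref{6.8.1}) the partial indices of $G$ are $k_1=-k$ and $k_2=k+1$, where $k$ equals both $\dim\ker T_G$ and the dimension of the solution space of (\ref{6.10.1}); hence once that dimension is shown to be $0$ we obtain partial indices $0$ and $1$, and, these being distinct, the bounded factorization $G=G_{-}\diag(1,\lambda_{-}/\lambda_{+})G_{+}$ is non-canonical.

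To prove that (\ref{6.12}) has only the trivial solution I would argue by contradiction, following the opening of the proof of Theorem~\ref{theo6.1}\,(ii)$\Rightarrow$(iii). Given a non-zero solution $(\psi_{+},\psi_{-})$, set $\widetilde{\psi}_{-}=(\lambda_{-}/\lambda_{+})\psi_{-}\in C_{\mu}^{-}(\Gamma)$, so that (\ref{6.12}) reads $g\psi_{+}=\frac{\tau}{(\xi-i)(\xi+ik_{0})}\widetilde{\psi}_{-}$. Applying the involution $*$, multiplying the two identities, and using $\tau^{2}=\mathfrak{p}=(\xi^{2}+1)(\xi^{2}+k_{0}^{2})$ together with $gg_{*}=\det G=\gamma_{-}\gamma_{+}(\lambda_{-}/\lambda_{+})$ (the case $m=1$ of (\ref{6.2})), one obtains
\[
\gamma_{-}\gamma_{+}\,\psi_{+}(\psi_{+})_{*}=-\frac{(\xi+i)^{2}(\xi-ik_{0})}{(\xi-i)^{2}(\xi+ik_{0})}\,\widetilde{\psi}_{-}(\widetilde{\psi}_{-})_{*}.
\]
Here $\psi_{+}(\psi_{+})_{*}$ is $*$-invariant and analytic and bounded in $\Sigma^{+}$, hence identified with an element of $C_{\mu}^{+}(\dot{\mathbb{R}})$, and likewise $\widetilde{\psi}_{-}(\widetilde{\psi}_{-})_{*}\in C_{\mu}^{-}(\dot{\mathbb{R}})$. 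Since $-i,-ik_{0}\in\mathbb{C}^{-}$ while $i,ik_{0}\in\mathbb{C}^{+}$, I would split the rational coefficient accordingly, moving $\frac{\xi-ik_{0}}{(\xi-i)^{2}}$ to the left and $\frac{(\xi+i)^{2}}{\xi+ik_{0}}$ to the right, so that the common value
\[
F:=\frac{(\xi-i)^{2}}{\xi-ik_{0}}\,\gamma_{+}\psi_{+}(\psi_{+})_{*}=-\frac{(\xi+i)^{2}}{\xi+ik_{0}}\,\gamma_{-}^{-1}\widetilde{\psi}_{-}(\widetilde{\psi}_{-})_{*}
\]
is analytic in $\mathbb{C}^{+}$ except for at most a simple pole at $ik_{0}$, analytic in $\mathbb{C}^{-}$ except for at most a simple pole at $-ik_{0}$, continuous across $\dot{\mathbb{R}}$, and of at most linear growth at the points at infinity (which lie on $\Gamma$) since $\gamma_{+}$, $\gamma_{-}^{-1}$ and the $\psi$-products are bounded there.

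Hence $F$ is a rational function of the form $P(\xi)/[(\xi-ik_{0})(\xi+ik_{0})]$ with $\deg P\le 3$. On the other hand, $\gamma_{+}\in\mathcal{G}C_{\mu}^{+}(\dot{\mathbb{R}})$ does not vanish at $i$ and $\gamma_{-}^{-1}\in\mathcal{G}C_{\mu}^{-}(\dot{\mathbb{R}})$ does not vanish at $-i$, so the two expressions for $F$ force it to vanish to order at least $2$ at each of $\xi=i$ and $\xi=-i$; thus $(\xi^{2}+1)^{2}\mid P$, whence $\deg P\ge 4$ — a contradiction unless $F\equiv 0$. But $F\equiv 0$ gives $\psi_{+}(\psi_{+})_{*}\equiv 0$ on the connected set $\Sigma^{+}$, hence $\psi_{+}\equiv 0$ by the identity theorem, and then (\ref{6.12}) forces $\psi_{-}\equiv 0$, contradicting $(\psi_{+},\psi_{-})\neq(0,0)$. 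The step I expect to require the most care is the bookkeeping in the last two paragraphs: pinning down the location of the branch points $\pm i,\pm ik_{0}$ relative to $\mathbb{C}^{\pm}$, splitting the rational coefficient correctly, verifying that the $*$-invariant products really descend to honest $C_{\mu}^{\pm}(\dot{\mathbb{R}})$ functions, and controlling the behaviour at the two points at infinity so that the Liouville-type degree count is rigorous; everything after $F\equiv 0$, and the passage from $k=0$ to the factorization with partial indices $0$ and $1$, is immediate from the material already in this section.
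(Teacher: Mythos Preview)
Your argument is correct and is genuinely different from the paper's proof. The paper reduces the indices of $g$ to zero by passing to $\tilde g=(\alpha_{+}^{-1}(\alpha_{+})_{*})^{n}\alpha_{+}\,g$, which introduces an extra zero of $\eta_{+}$ at the branch point $i$; it then invokes Theorem~\ref{theo6.1} to obtain a $\Sigma$-factorization $\tilde g=\tilde g_{-}\,\tau[(\xi+i)(\xi-ik_{0})]^{-1}\,\tilde g_{+}$ and concludes by a divisor count on $\Sigma$ for the rational function $q_{0}=[(\xi+ik_{0})/(\xi-ik_{0})]\tilde g_{+}\eta_{+}=\tilde g_{-}^{-1}\psi_{-}$. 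Your route is more elementary: you bypass the $\alpha_{+}$ normalisation and the $\Sigma$-factorization of $\tilde g$ entirely, use only the $*$-multiplication device (already present in the proof of Theorem~\ref{theo6.1}) together with $gg_{*}=\gamma_{-}(\lambda_{-}/\lambda_{+})\gamma_{+}$, and finish with a Liouville-type degree count on $\mathbb{C}$ rather than on $\Sigma$. The step you flagged as delicate is indeed the one to write out carefully: the cleanest way to make it rigorous is to note that $(\xi^{2}+k_{0}^{2})F=(\xi-i)^{2}(\xi+ik_{0})\gamma_{+}\psi_{+}(\psi_{+})_{*}$ in $\mathbb{C}^{+}$ and $=-(\xi+i)^{2}(\xi-ik_{0})\gamma_{-}^{-1}\widetilde{\psi}_{-}(\widetilde{\psi}_{-})_{*}$ in $\mathbb{C}^{-}$, with the $*$-invariant products lying in $C_{\mu}^{\pm}(\dot{\mathbb{R}})\subset H_{\infty}^{\pm}$, so that $(\xi^{2}+k_{0}^{2})F$ is entire and $O(|\xi|^{3})$ in each half-plane; this gives $\deg P\le 3$ without appealing to growth ``along $\dot{\mathbb{R}}$'' alone. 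What the paper's approach buys is consistency with the surface-factorization framework of Sections~4--5; what yours buys is a self-contained argument that needs nothing beyond the $*$-involution and the scalar factorization (\ref{6.2}) of $\det G$.
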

\begin{proof}
Let $\ind_{1}g=\ind g_{1}=n$, $\ind_{2}g=\ind g_{2}=-n+1$, with $n \in \mathbb{Z}$. As in the proof of Theorem \ref{theo6.2}, we use the equivalence between (\ref{6.10.1}) and (\ref{6.12}). Assume that $\psi_{\pm} \neq 0$ satisfy (\ref{6.12}).
Then, defining $\tilde{g}=(\alpha_{+}^{-1}(\alpha_{+})_*)^n\alpha_{+}\ g$,
we see from Theorem \ref{theo3.6} and
Corollary \ref{cor3.7} that $\ind_{j}\tilde{g}=0$ for $j=1,2$ and
the equation (\ref{6.12}) is equivalent to
\begin{equation}
\tilde{g}\eta_{+}=\frac{\tau}{\mathfrak{p_{1}}}\psi_{-}
\label{6.19}\end{equation}
where $\eta_{+}=(\alpha_{+}^{-1}(\alpha_{+})_*)^{-n}\alpha_{+}^{-1}\psi_{+} \in C_{\mu}^+(\Gamma)$ and $\eta_{+}$ has a zero at the branch point $i$ due to the factor $\alpha_{+}^{-1}$. Since $\eta_{+}, \psi_{-}\neq 0$, it follows from Theorem \ref{theo6.1} that $\tilde{g}$ admits a factorization
$\tilde{g}=\tilde{g}_{-}\tau/[(\xi+i)(\xi-ik_{0})]\tilde{g}_{+}$
so that, from (\ref{6.19}), we have
$\left[(\xi+ik_{0})/(\xi-ik_{0})\right]\tilde{g}_{+}\eta_{+}=\tilde{g}_{-}^{-1}\psi_{-}=q_{0}$
where $q_{0}\in \mathcal{R}(\Sigma)$ must have a double zero at the
branch point $-i$ (due to $\psi_{-} \in C_{\mu 0}^-(\Gamma)$), as
well as a zero at the branch point $i$ (due to the factor $\alpha
_{+}^{-1}$ in $\eta_{+}$) and, at most, a double pole at the branch point $ik_{0}$. Thus $q_{0}=0$, which implies that
$\psi_{\pm}=0$, against our assumption. Therefore (\ref{6.12}) has
only the trivial solution $\psi_{\pm}=0$. We conclude moreover that
(\ref{6.10.1}) admits also only the trivial solution
$\phi_{\pm}=0$ and therefore the partial indices of $G$ must be
non-negative (\cite{MP,CG,LS}). Since the total index of $G$ is
$1=\ind(\det G)=m$, it follows that the partial indices in a bounded
factorization of $G$ are $0, 1$.
\end{proof}
\begin{cor}
Let the assumptions of Theorem \ref{theo6.5} hold; then the Toeplitz operator $T_G$ in $(H_p^+)^2$ is injective, for all $p \in ]1, +\infty[$.
\end{cor}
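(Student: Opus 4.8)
The plan is to read off the statement from Theorem \ref{theo6.5} by translating injectivity of $T_G$ into the language of the homogeneous vectorial RH problem. By definition, $\ker T_G$ (acting in $(H_p^+)^2$) is the solution space of the homogeneous problem (\ref{6.9.1}), and, as recalled in the paragraph preceding (\ref{6.10.1}), the assignment $\varphi_\pm\mapsto\phi_\pm=\lambda_\pm\varphi_\pm$ is a linear bijection between the solutions of (\ref{6.9.1}) and the solutions of (\ref{6.10.1}) (this uses only the structure (\ref{6.6.1})--(\ref{6.8.1}) of the bounded factorization of $G$). Hence $\dim\ker T_G$ equals the dimension of the solution space of (\ref{6.10.1}).

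Now I would simply invoke Theorem \ref{theo6.5}: under the present hypotheses ($m=1$ in (\ref{6.2})) that theorem asserts that (\ref{6.10.1}) has only the trivial solution $\phi_\pm=0$. Consequently the solution space of (\ref{6.9.1}) is trivial, i.e. $\ker T_G=\{0\}$, which is precisely injectivity of $T_G$. Equivalently, one can argue via the partial indices: Theorem \ref{theo6.5} gives that $G$ admits a bounded factorization (\ref{6.6.1})--(\ref{6.7.1}) with partial indices $k_1=0$, $k_2=1$, and for a Toeplitz operator whose $2\times2$ symbol admits such a factorization one has $\dim\ker T_G=\max(0,-k_1)+\max(0,-k_2)$ (\cite{MP,CG,LS}); both partial indices being non-negative, the kernel is zero.

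I do not expect any genuine obstacle here, since the corollary is an immediate consequence of Theorem \ref{theo6.5}. The only points deserving a word of care are: first, that the identification $\dim\ker T_G=k$ recalled earlier was stated under the normalisation $m\in\{0,1\}$, which is exactly the case $m=1$ treated in Theorem \ref{theo6.5}; and second, the uniformity in $p$. The latter is immediate because the kernel of a Toeplitz operator with a (bounded, hence $p$-independent) factorable symbol is spanned by the same vector functions of the form $\lambda_+^{-1}$ times entries of $C_\mu^+(\dot{\mathbb{R}})^2$ for every $p\in\,]1,+\infty[$; since that span is $\{0\}$ by the above, the conclusion holds for all such $p$.
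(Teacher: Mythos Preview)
Your proposal is correct and matches the paper's approach: the corollary is stated without proof in the paper, being treated as an immediate consequence of Theorem~\ref{theo6.5}, and your argument spells out precisely that reduction (via the bijection between solutions of (\ref{6.9.1}) and (\ref{6.10.1}), or equivalently via the non-negativity of the partial indices $0,1$).
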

Explicit formulas for a  WH factorization of $G$ can also be obtained
from a $\Sigma$-factorization of its $\Sigma$-symbol. This not only
illustrates the usefulness of the results in the previous sections,
but moreover shows the importance of determining
$\Sigma$-factorizations with factors of a particular type, like
$r_{\nu}$, $\alpha_{\pm}$. Indeed, as we show next, these factors
are $\Sigma$-symbols of certain important elements of $C(Q)$.
\begin{defn}
Let $\mathcal{I}:C(Q)\rightarrow \mathcal{G}C_{\mu}(\Gamma)$ be defined by $\mathcal{I}(G)=g$
where $g$ is the $\Sigma$-symbol of $G$ (cf. (\ref{6.12A})).
\end{defn}
It is easy to see that $C(Q)$ is a multiplicative group (\cite{CMalheiro}) and $\mathcal{I}$ is a group isomorphism. For $g \in \mathcal{G}C_{\mu}(\Gamma)$,
\begin{equation}
\mathcal{I}^{-1}(g)=\left[
                      \begin{array}{cc}
                        g_{\mathcal{E}} & \mathfrak{p}_{2}g_{\mathcal{O}} \\
                        \mathfrak{p}_{1}g_{\mathcal{O}} & g_{\mathcal{E}} \\
                      \end{array}
                    \right], \quad \mathcal{I}^{-1}(g_*)=\left[
                      \begin{array}{cc}
                        g_{\mathcal{E}} & -\mathfrak{p}_{2}g_{\mathcal{O}} \\
                        -\mathfrak{p}_{1}g_{\mathcal{O}} & g_{\mathcal{E}} \\
                      \end{array}
                    \right]=(\mathcal{I}^{-1}(g))^{\ast}
\label{6.21}\end{equation}where by $M^{\ast}$ we denote the adjugate (algebraic conjugate) of a matrix $M$.
Moreover, we have the following property:
\begin{prop} \label{prop6.7A}The image of $C(Q)\cap \mathcal{G}(C_{\mu}^{\pm}(\dot{\mathbb{R}}))^{2 \times 2}$ by $\mathcal{I}$ is $\mathcal{G}C_{\mu}^{\pm}(\Gamma)$.
\end{prop}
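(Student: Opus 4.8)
The plan is to translate the membership "$G \in C(Q) \cap \mathcal{G}(C^{\pm}_{\mu}(\dot{\mathbb{R}}))^{2\times 2}$" into a statement about the corresponding $\Sigma$-symbol $g = \mathcal{I}(G)$, using the explicit formula (\ref{6.12A}), namely $g = \alpha + (\tau/\mathfrak{p}_2)\delta$, together with the decomposition $g = g_{\mathcal{E}} + \tau g_{\mathcal{O}}$ of a function on a $*$-invariant set. Since $\mathcal{I}$ is a group isomorphism and $C^{\pm}_{\mu}(\dot{\mathbb{R}}))^{2\times 2}$ and $C^{\pm}_{\mu}(\Gamma)$ are groups, it suffices to prove the \emph{set-theoretic} equality of images; invertibility of the image elements is then automatic from the fact that $\mathcal{I}$ carries inverses to inverses. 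So I would reduce to: show $\mathcal{I}\big(C(Q)\cap(C^{\pm}_{\mu})^{2\times2}\big) = C^{\pm}_{\mu}(\Gamma)$ as sets (of not-necessarily-invertible elements), handling the two signs in parallel.

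For the inclusion $\subseteq$: take $G = \mathcal{I}^{-1}(g)$ as in (\ref{6.21}), so $\alpha = g_{\mathcal{E}}$, $\delta = \mathfrak{p}_2 g_{\mathcal{O}}$ and $q\delta = \mathfrak{p}_1 g_{\mathcal{O}}$. Assume all four entries of $G$ lie in $C^{+}_{\mu}(\dot{\mathbb{R}})$ (the "$+$" case). Then $g_{\mathcal{E}} = \alpha \in C^{+}_{\mu}(\dot{\mathbb{R}})$ and $g_{\mathcal{O}} = \delta/\mathfrak{p}_2$; since $\mathfrak{p}_2(\xi) = (\xi - i)(\xi - ik_0)$ has both zeros in $\mathbb{C}^+$, the function $1/\mathfrak{p}_2$ is analytic and bounded in $\mathbb{C}^-$, hence $\tau g_{\mathcal{O}} = \tau \mathfrak{p}_2^{-1}\delta$ extends analytically to $\Sigma^+$ (recall $\tau$ restricted to $\Gamma_j$ is $\pm\rho$, and $\rho = \rho_-\rho_+$ with $\rho_\pm$ as in section 3 governs the behaviour of $\tau$; one checks $\tau \mathfrak{p}_2^{-1}$ is holomorphic on $\Sigma^+\cup\Gamma$). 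Therefore $g = g_{\mathcal{E}} + \tau g_{\mathcal{O}} \in C^{+}_{\mu}(\Gamma)$. The "$-$" case is symmetric, using that $\mathfrak{p}_1(\xi) = (\xi + i)(\xi + ik_0)$ has zeros in $\mathbb{C}^-$ so that $q\delta = \mathfrak{p}_1 g_{\mathcal{O}}$ being the relevant holomorphic object forces $g \in C^{-}_{\mu}(\Gamma)$.

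For the reverse inclusion $\supseteq$: given $g \in C^{+}_{\mu}(\Gamma)$, write $g = g_{\mathcal{E}} + \tau g_{\mathcal{O}}$ and set $G = \mathcal{I}^{-1}(g)$ via (\ref{6.21}). One must check that all four entries $g_{\mathcal{E}}$, $\mathfrak{p}_2 g_{\mathcal{O}}$, $\mathfrak{p}_1 g_{\mathcal{O}}$ extend analytically and boundedly to $\mathbb{C}^+$. Since $g$ extends analytically to $\Sigma^+$, so does $g_* $ (by the fact that $\Sigma^+$ is $*$-stable — careful here: $\Sigma^+$ is \emph{not} $*$-invariant; rather $*(\Sigma^+) = \Sigma^-$, so I would instead argue directly from $g$ and its values on $\Gamma_1$, $\Gamma_2$), and hence $g_{\mathcal{E}}$, $\tau g_{\mathcal{O}}$ extend to $\Sigma^+$; projecting down via $\Pi$ and using that $*$-invariant holomorphic functions on $\Sigma^+$ correspond to holomorphic functions on $\mathbb{C}^+$, one gets $g_{\mathcal{E}} \in C^{+}_{\mu}(\dot{\mathbb{R}})$ and $\tau g_{\mathcal{O}} \in$ (holomorphic on $\Sigma^+$), from which $\mathfrak{p}_1 g_{\mathcal{O}}$ and $\mathfrak{p}_2 g_{\mathcal{O}}$ are seen to be in $C^{+}_{\mu}(\dot{\mathbb{R}})$ because $\mathfrak{p}_1, \mathfrak{p}_2$ differ from $\tau^2/\mathfrak{p}_2$ resp.\ $\tau^2/\mathfrak{p}_1$ by rational functions with poles only off $\dot{\mathbb{R}}$ and one tracks the branch-point behaviour as in Proposition (the one before Definition of $\mathcal{I}$) and section 3. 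Finally, $G$ satisfies (\ref{6.1}) automatically since $G = \mathcal{I}^{-1}(g)$ lies in the (full) class $C(Q)$, so $G \in C(Q)\cap(C^{+}_{\mu})^{2\times2}$ and $\mathcal{I}(G) = g$.

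The main obstacle I anticipate is the careful bookkeeping of analyticity across the branch points $\pm i$, $\pm i k_0$: the functions $\tau$, $\rho$, $\mathfrak{p}_1$, $\mathfrak{p}_2$ all have prescribed zero/pole orders there, and one must verify that the combinations $\tau\mathfrak{p}_2^{-1}$, $\mathfrak{p}_1 g_{\mathcal{O}} = (\tau^2/\mathfrak{p}_2) g_{\mathcal{O}} \cdot (\mathfrak{p}_1\mathfrak{p}_2/\tau^2)$, etc., are genuinely holomorphic (no spurious poles) on the relevant half-surface — exactly the kind of computation that section 3 (especially the remarks on $\rho_\pm$ and formulas (\ref{3.20B})) was set up to streamline. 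Once the branch-point behaviour is pinned down, the H\"older regularity on $\Gamma$ transfers to H\"older regularity on $\dot{\mathbb{R}}$ via the identification of $*$-invariant functions recalled in section 2, and the proof closes.
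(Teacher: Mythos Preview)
Your forward inclusion contains a genuine error. You claim ``$\tau\mathfrak{p}_2^{-1}$ is holomorphic on $\Sigma^+\cup\Gamma$'', but this is false: $\mathfrak{p}_2(\xi)=(\xi-i)(\xi-ik_0)$ vanishes at the branch points $i,ik_0\in\Sigma^+$, where the local parameter is $\tau$ and $\xi-i\sim c\tau^2$, so $\mathfrak{p}_2$ has a zero of order $2$ while $\tau$ has a zero of order $1$, giving $\tau/\mathfrak{p}_2$ a \emph{simple pole} at each of $i$ and $ik_0$. Your sentence ``$1/\mathfrak{p}_2$ is analytic and bounded in $\mathbb{C}^-$, hence $\tau g_{\mathcal{O}}$ extends analytically to $\Sigma^+$'' is a non sequitur: $\Sigma^+$ projects to $\mathbb{C}^+$, not $\mathbb{C}^-$. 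The missing idea, which the paper uses, is that you must exploit \emph{both} off-diagonal entries: since $\delta\in C_\mu^+(\dot{\mathbb{R}})$ \emph{and} $q\delta=(\mathfrak{p}_1/\mathfrak{p}_2)\delta\in C_\mu^+(\dot{\mathbb{R}})$, and $\mathfrak{p}_1$ does not vanish at $i,ik_0$, the holomorphy of $q\delta$ in $\mathbb{C}^+$ forces $\delta(i)=\delta(ik_0)=0$. Only then does $(\tau/\mathfrak{p}_2)\delta$ become holomorphic on $\Sigma^+$.

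A second, smaller error: your self-correction ``$\Sigma^+$ is not $*$-invariant; rather $*(\Sigma^+)=\Sigma^-$'' is wrong. The involution $*:(\xi,\tau)\mapsto(\xi,-\tau)$ fixes $\xi$, and $\Sigma^+=\Pi^{-1}(\mathbb{C}^+)$ is defined by a condition on $\xi$ alone, so $\Sigma^+$ \emph{is} $*$-invariant (it is the sheets $\Sigma_1,\Sigma_2$ that $*$ interchanges). Thus your original instinct for the reverse inclusion---that $g\in C_\mu^+(\Gamma)$ implies $g_*\in C_\mu^+(\Gamma)$, hence $g_{\mathcal{E}},\tau g_{\mathcal{O}}\in C_\mu^+(\Gamma)$---was correct, and from there the paper's observation (section~2) that $g_{\mathcal{E}},\lambda_+^2 g_{\mathcal{O}}\in C_\mu^+(\dot{\mathbb{R}})$ gives what you need for $\mathcal{I}^{-1}(g)\in(C_\mu^+)^{2\times 2}$ directly. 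Your group-isomorphism reduction for invertibility is fine in substance; the paper instead checks invertibility via $\det G=gg_*\in\mathcal{G}C_\mu^+(\dot{\mathbb{R}})$, which is equivalent.
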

\begin{proof}
If $G \in C(Q)\cap (C_{\mu}^{+}(\dot{\mathbb{R}}))^{2 \times 2}$, then from (\ref{6.6}) we see that we must have $\alpha, \delta \in C_\mu^+(\dot{\mathbb{R}})$, $\delta (i)=\delta (ik_0)=0$ so that $g=\mathcal{I}(G)=\alpha+(\tau/p_2)\delta \in C_\mu^+ (\Gamma)$. On the other hand, if $G$ is invertible in $(C_\mu^+(\dot{\mathbb{R}}))^{2 \times 2}$, then $\det G= \alpha^2-q\delta^2=gg_* \in \mathcal{G}C_\mu^+(\dot{\mathbb{R}})$. Therefore $g$ is bounded away from zero and we conclude that $g=\mathcal{I}(G) \in \mathcal{G}C_{\mu}^{+}(\Gamma)$.

Conversely, if $g \in \mathcal{G}C_{\mu}^{+}(\Gamma)$, then $g_\mathcal{E}, \lambda_+^2g_\mathcal{O} \in C_\mu^+(\dot{\mathbb{R}})$ (cf. section 2) and $gg_* \in \mathcal{G}C_\mu^+(\dot{\mathbb{R}})$, and it follows from (\ref{6.21}) that $\mathcal{I}^{-1}(g)\in \mathcal{G}(C_{\mu}^{+}(\dot{\mathbb{R}}))^{2 \times 2}$.

We can prove analogously that the image of $C(Q)\cap \mathcal{G}(C_{\mu}^{-}(\dot{\mathbb{R}}))^{2 \times 2}$ by $\mathcal{I}$ is $\mathcal{G}C_{\mu}^{-}(\Gamma)$.
\end{proof}
We can now characterize completely the subclass of matrix functions belonging to $C(Q)$ and admitting a commutative canonical factorization within $C(Q)$ (see for instance \cite{CSBastos2, PS} as regards the discussion of this problem).
\begin{thm}
$G \in C(Q)$ admits a canonical WH factorization with $G_{\pm} \in C(Q)$ if and only if its $\Sigma$-symbol $g$ admits a special $\Sigma$-factorization.
\end{thm}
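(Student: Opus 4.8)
The plan is to transfer the problem entirely to the level of $\Sigma$-symbols via the group isomorphism $\mathcal{I}$, using Proposition \ref{prop6.7A} as the bridge between matrix factorizations in $C(Q)$ and scalar $\Sigma$-factorizations on $\Gamma$. First I would prove the forward implication: suppose $G = G_- G_+$ is a canonical WH factorization with $G_\pm \in C(Q)$, hence $G_\pm \in C(Q) \cap \mathcal{G}(C_\mu^\pm(\dot{\mathbb{R}}))^{2\times 2}$. Applying $\mathcal{I}$ and using that it is a group homomorphism gives $g = \mathcal{I}(G) = \mathcal{I}(G_-)\mathcal{I}(G_+) = g_- g_+$, and by Proposition \ref{prop6.7A} we have $g_\pm \in \mathcal{G}C_\mu^\pm(\Gamma)$. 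By the definition of special $\Sigma$-factorization (the case $r = 1$ in (\ref{4.1})), this is exactly a special $\Sigma$-factorization of $g$.

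For the converse, assume $g = \mathcal{I}(G)$ admits a special $\Sigma$-factorization $g = g_- g_+$ with $g_\pm \in \mathcal{G}C_\mu^\pm(\Gamma)$. Set $G_\pm = \mathcal{I}^{-1}(g_\pm)$; by Proposition \ref{prop6.7A} applied in the reverse direction, $G_\pm \in C(Q) \cap \mathcal{G}(C_\mu^\pm(\dot{\mathbb{R}}))^{2\times 2}$, and since $\mathcal{I}^{-1}$ is multiplicative, $G = \mathcal{I}^{-1}(g) = \mathcal{I}^{-1}(g_-)\mathcal{I}^{-1}(g_+) = G_- G_+$. This is a WH factorization of $G$ with factors in $C(Q)$, and it remains to check it is \emph{canonical}, i.e., that the diagonal middle factor $D$ in (\ref{6.6.1})--(\ref{6.7.1}) is the identity. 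This follows because $\det G_- \det G_+ = \det G$ with $\det G_\pm = g_\pm (g_\pm)_* \in \mathcal{G}C_\mu^\pm(\dot{\mathbb{R}})$ (this is the determinant computation used in the proof of Proposition \ref{prop6.7A}), so $\det G$ has a canonical scalar factorization and hence $m = 0$ in (\ref{6.2}) and $k_1 + k_2 = 0$; combined with $G_\pm^{\pm 1}$ being bounded and analytic in $\Sigma^\pm$ (so the partial indices are forced to vanish), the factorization $G = G_- G_+$ is canonical.

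The one point that needs slight care — and the main (minor) obstacle — is the direction ``$G$ has a canonical WH factorization with $G_\pm \in C(Q)$'' in the first implication: the hypothesis only says \emph{some} canonical factorization of $G$ has its factors in $C(Q)$, but one must make sure that $\mathcal{I}(G_\pm)$ lands in $\mathcal{G}C_\mu^\pm(\Gamma)$ rather than merely $C_\mu^\pm(\Gamma)$. This is precisely guaranteed by Proposition \ref{prop6.7A}, since $G_\pm$ being WH factors are invertible in $(C_\mu^\pm(\dot{\mathbb{R}}))^{2\times 2}$. Conversely, in the second implication one must confirm that a \emph{special} $\Sigma$-factorization of $g$ (not merely a holomorphic one with a nontrivial rational middle factor) is what produces a \emph{canonical} matrix factorization; a nontrivial $r \in \mathcal{R}(\Sigma)$ would correspond, via $\mathcal{I}^{-1}$, to a nonconstant diagonalizable middle factor and hence to nonzero partial indices, so the equivalence is genuinely between ``canonical'' and ``special.'' No deep new computation is required — the proof is essentially the observation that $\mathcal{I}$ intertwines the two factorization problems and Proposition \ref{prop6.7A} identifies the relevant subgroups.
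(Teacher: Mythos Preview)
Your proof is correct and follows essentially the same route as the paper: both directions are obtained by transporting the factorization through the group isomorphism $\mathcal{I}$ and invoking Proposition~\ref{prop6.7A} to identify $C(Q)\cap\mathcal{G}(C_\mu^\pm(\dot{\mathbb{R}}))^{2\times2}$ with $\mathcal{G}C_\mu^\pm(\Gamma)$. Your extra verification that the converse yields a \emph{canonical} factorization is unnecessary, since $G=G_-G_+$ with $G_\pm\in\mathcal{G}(C_\mu^\pm(\dot{\mathbb{R}}))^{2\times2}$ is canonical by definition (the middle factor $D$ is already the identity), but this does not affect correctness.
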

\begin{proof}
If $G=G_-G_+$ with $G_{\pm} \in C(Q) \cap \mathcal{G}(C_{\mu}^{\pm}(\dot{\mathbb{R}}))^{2 \times 2}$ then by \linebreak Proposition \ref{prop6.7A} we have $g=g_-g_+$ with $g_{\pm}\in \mathcal{G}C_{\mu}^{\pm}(\Gamma)$, and conversely.
\end{proof}
We remark that in the case where $G$ admits a canonical WH factorization we necessarily have $\ind_{\Gamma}g=0$, so that (\ref{4.8H}) gives a necessary and sufficient condition for existence of a factorization $G=G_{-}G_{+}$ with factors in $ C(Q)$.

It is also useful to remark at this point that, if $f \in \mathcal{G}C_{\mu}(\dot{\mathbb{R}})$ and we identify it with a function in $\mathcal{G}C_{\mu}(\Gamma)$, we have $\mathcal{I}^{-1}(f)=fI$. Moreover:
\begin{equation}
\hspace{-6cm}\mathcal{I}^{-1}(r_{\nu})=\left[
                      \begin{array}{cc}
                        \nu & \frac{\xi-i}{\xi+i} \\
                        \frac{\xi+ik_{0}}{\xi-ik_{0}} & \nu \\
                      \end{array}
                    \right]=:R_{\nu},
\label{6.22}\end{equation}
\begin{equation*}
\mathcal{I}^{-1}(\alpha_{-})=\left[
                      \begin{array}{cc}
                        C & \frac{\rho_{-}}{\xi+i} \\
                        \frac{\xi+ik_{0}}{\rho_{-}} & C \\
                      \end{array}
                    \right]=:A_{-}, \quad \mathcal{I}^{-1}(\alpha_{+})=\left[
                      \begin{array}{cc}
                        C & \frac{\xi-ik_{0}}{\rho_{+}} \\
                         \frac{\rho_{+}}{\xi-i} & C \\
                      \end{array}
                    \right]=:A_{+}
\label{6.23}\end{equation*}
\begin{equation*}
\hspace{-0.3cm}\mathcal{I}^{-1}(\alpha_{+}^{-1}(\alpha_{+})_*)=\frac{2}{k_{0}-1}\frac{\xi-i}{\xi+i}\left[
                      \begin{array}{cc}
                        C^2+\frac{\xi-ik_{0}}{\xi-i} & -2C\frac{\xi-ik_{0}}{\rho_{+}} \\
                        -2C\frac{\rho_{+}}{\xi-i} & C^2+\frac{\xi-ik_{0}}{\xi-i} \\
                      \end{array}
                    \right] \in \mathcal{G}C_{\mu}^+(\dot{\mathbb{R}})^{2 \times
                    2},
\label{6.24}\end{equation*} where $r_{\nu}$, $\alpha_{\pm}$ and $C$
are defined by (\ref{3.16}), (\ref{3.19}) and (\ref{3.20}),
respectively.

Since, by Theorem \ref{theo5.4}, every $g \in \mathcal{G}C_{\mu}(\Gamma)$ admits a meromorphic $\Sigma$-factorization of the form (\ref{5.2}), it is clear that applying $\mathcal{I}^{-1}$ to its right-hand side we obtain a meromorphic factorization (\cite{LS, CLSpeck}) for any $G \in C(Q)$.

Moreover, by (\ref{5.5.A}) and Theorem \ref{theo5.1}, we can reduce the problem of factorizing $g$ to the case where, apart from a rational function $(\lambda_-/\lambda_+)^{-\tilde{k}}$, we have
$g=g_-\alpha_-^s r_{\mu}^t\alpha_+^v g_+,$ with $s,t \in \{0,1,2\}$ and $v \in \{0,1\}$, $g_{\pm} \in \mathcal{G}C_\mu^{\pm}(\Gamma)$. In this case by applying $\mathcal{I}^{-1}$ we obtain a meromorphic factorization of the form
\begin{equation}\label{A}G=\mathfrak{G}_{-}\mathcal{M}\mathfrak{G}_{+}, \ \text{ where } \mathfrak{G}_{\pm}\in \mathcal{G}(C_{\mu}^{\pm}(\dot{\mathbb{R}}))^{2 \times 2}\end{equation}
 and the middle factor $\mathcal{M}$ is a product whose factors are equal to $A_{+}, A_{-}$ or $R_{\nu}$. More precisely, the middle factor $\mathcal{M}$ takes one of the forms $I, R_{\nu}, R_{\nu}^2$, $A_{-}A_{+}$, $A_{-}R_{\nu}A_{+}$ if $m=0$, or $A_{-}$, $A_{-}R_{\nu}$, $A_{-}R_{\nu}^2$, $A^2_{-}A_{+}$, $A_{-}^2R_{\nu}A_{+}$ if $m=1$. The following results, together with (\ref{6.21}) and Proposition \ref{prop6.7A}  provide a WH factorization for $\mathcal{M}$ in each case.
\begin{thm}\label{theo6.7}
If $\nu\neq 0$, $\nu^2\neq 1$ then $R_{\nu}$ admits a canonical WH factorization
$R_{\nu}=(R_{\nu})_{-}(R_{\nu})_{+}$ with
\begin{equation*}
(R_{\nu})_{-}=\left[
                      \begin{array}{cc}
                        \nu & 0 \\
                        \frac{\xi+ik_{0}}{\xi-ik_{0}} & \frac{1-\nu^2}{\nu}\frac{\xi-k_{0}/z_{0}}{\xi-ik_{0}} \\
                      \end{array}
                    \right], \quad (R_{\nu})_{+}=\left[
                      \begin{array}{cc}
                        1 & \frac{1}{\nu}\frac{\xi-i}{\xi+i} \\
                        0 & -\frac{\xi-z_{0}}{\xi+i} \\
                      \end{array}
                    \right].
\label{6.25}\end{equation*} If $\nu=
0$, then $R_{\nu}$ admits the non-canonical WH factorization \linebreak
$R_{0}=(R_{0})_{-}\diag (\lambda_{+}/\lambda_{-}, \lambda_{-}/\lambda_{+})(R_{0})_{+}$ where
\begin{equation}
(R_{0})_{-}=\left[
                      \begin{array}{cc}
                        0 & 1 \\
                        \frac{\xi-i}{\xi-ik_{0}} & 0 \\
                      \end{array}
                    \right], \quad (R_{0})_{+}=\left[
                      \begin{array}{cc}
                        \frac{\xi+ik_{0}}{\xi+i} & 0 \\
                        0 & 1 \\
                      \end{array}
                    \right].
\label{6.26}\end{equation}
\end{thm}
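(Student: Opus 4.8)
The plan is to verify the two claimed factorizations directly, checking in each case that the stated matrices lie in the asserted one-sided spaces, that their product equals $R_\nu$ (possibly after inserting the diagonal middle factor), and that the one-sided factors are invertible in the appropriate sense. First I would treat the case $\nu \neq 0$, $\nu^2 \neq 1$. To see that $(R_\nu)_{\pm} \in \mathcal{G}(C_\mu^{\pm}(\dot{\mathbb R}))^{2\times 2}$, I would compute determinants: $\det (R_\nu)_+ = -(\xi-z_0)/(\xi+i)$, which is bounded away from $0$ and $\infty$ in $\overline{\mathbb C^+}$ precisely because $z_0 \in \mathbb C^-$ (recall the remark after (\ref{3.14}) that $z_0 \in \mathbb C^-$), and similarly $\det (R_\nu)_- = (1-\nu^2)(\xi - k_0/z_0)/(\xi - ik_0)$, where $k_0/z_0 \in \mathbb C^+$ (again by the remark after (\ref{3.14})) so this is invertible in $C_\mu^-$; the entries themselves are manifestly analytic and bounded in the respective half-planes. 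Then I would multiply $(R_\nu)_-(R_\nu)_+$ entry by entry and check it equals $R_\nu$ as given in (\ref{6.22}): the $(1,1)$ entry gives $\nu$, the $(1,2)$ entry gives $\nu \cdot \frac{1}{\nu}\frac{\xi-i}{\xi+i} = \frac{\xi-i}{\xi+i}$, the $(2,1)$ entry gives $\frac{\xi+ik_0}{\xi-ik_0}$, and the $(2,2)$ entry requires the identity $\frac{1}{\nu}\frac{\xi+ik_0}{\xi-ik_0}\frac{\xi-i}{\xi+i} - \frac{1-\nu^2}{\nu}\frac{\xi-k_0/z_0}{\xi-ik_0}\frac{\xi-z_0}{\xi+i} = \nu$, which is exactly the content of (\ref{3.18A}) rewritten, since $r_\nu(r_\nu)_* = \nu^2 - \frac{(\xi-i)(\xi+ik_0)}{(\xi+i)(\xi-ik_0)}$ and $(\nu^2-1)(\xi-z_0)(\xi-k_0/z_0) = (\nu^2-1)\cdot$ (appropriate quadratic).

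For the case $\nu = 0$, I would similarly verify that $(R_0)_{\pm}$ as in (\ref{6.26}) are invertible in $(C_\mu^{\pm}(\dot{\mathbb R}))^{2\times 2}$: $(R_0)_+$ is upper triangular with diagonal entries $(\xi+ik_0)/(\xi+i)$ and $1$, both invertible in $C_\mu^+$, while $(R_0)_-$ is anti-diagonal with entries $1$ and $(\xi-i)/(\xi-ik_0)$, invertible in $C_\mu^-$. Then I would compute $(R_0)_- \diag(\lambda_+/\lambda_-, \lambda_-/\lambda_+)(R_0)_+$ and check it reproduces $R_0 = \mathcal{I}^{-1}(r_0)$, i.e. the matrix with $(1,1)$ and $(2,2)$ entries $0$, $(1,2)$ entry $\frac{\xi-i}{\xi+i} = \lambda_-/\lambda_+$, and $(2,1)$ entry $\frac{\xi+ik_0}{\xi-ik_0}$; indeed the anti-diagonal structure of $(R_0)_-$ swaps rows, the diagonal factor contributes $\lambda_+/\lambda_-$ and $\lambda_-/\lambda_+$, and $(R_0)_+$ supplies the remaining rational factors, so the product is a routine $2\times2$ multiplication.

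I do not expect a genuine obstacle here: the theorem is an explicit check, and all the nontrivial algebra is already packaged in (\ref{3.18A}) and in the location of the zeros $z_0$ and $k_0/z_0$ relative to $\dot{\mathbb R}$. The one point that requires a little care — and which I would flag as the "main step" — is confirming that $z_0 \in \mathbb C^-$ and $k_0/z_0 \in \mathbb C^+$ so that the factors of $\det(R_\nu)_\pm$ sit on the correct side; this is what makes the factorization canonical when $\nu \neq 0, \nu^2 \neq 1$, and it is exactly the property recorded in the sentence following (\ref{3.14}). In the degenerate case $\nu = 0$ the zero $z_0$ of $r_\nu$ degenerates to the branch point $-ik_0$ (as noted in the remark after Theorem \ref{theo3.5}), which lands on the cut rather than strictly in $\mathbb C^-$, and this is precisely why a canonical factorization is no longer possible and the diagonal factor $\diag(\lambda_+/\lambda_-,\lambda_-/\lambda_+)$ must appear; I would make this explicit to explain the dichotomy in the statement.
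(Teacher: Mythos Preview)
Your proposal is correct and follows essentially the same approach as the paper: the paper's proof is a terse version of exactly what you outline, namely a direct verification of the matrix product using identity (\ref{3.18A}) for the nontrivial $(2,2)$ entry, together with the observation that $(R_{\nu})_{\pm}\in (C_{\mu}^{\pm}(\dot{\mathbb{R}}))^{2\times 2}$ with $\det (R_{\nu})_{\pm}\in \mathcal{G}C_{\mu}^{\pm}(\dot{\mathbb{R}})$, and calls the $\nu=0$ case ``straightforward.'' Your additional remarks on why $z_0\in\mathbb{C}^-$ and $k_0/z_0\in\mathbb{C}^+$ guarantee the correct one-sided invertibility, and your explanation of the $\nu=0$ degeneration, are more explicit than what the paper writes but entirely in the same spirit.
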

\begin{proof}The equality $R_{\nu}=(R_{\nu})_{-}(R_{\nu})_{+}$, for $\nu \neq 0$, $\nu^2 \neq 1$, can be checked directly, taking (\ref{3.18A}) into account, and it is easy to verify that \linebreak $(R_{\nu})_{\pm}\in (C_{\mu}^{\pm}(\dot{\mathbb{R}}))^{2 \times 2}$ and $\det (R_{\nu})_{\pm}\in \mathcal{G}C_{\mu}^{\pm}(\dot{\mathbb{R}})$. The factorization for $R_{0}$ is straightforward.
\end{proof}
As regards the statement of the previous theorem, we remark that in (\ref{3.15}) we may have $\nu=0$ but we never have $\nu^2=1$, so that the latter case was not considered above. On the other hand it is well known, and easy to see, that the canonical WH factorization presented in Theorem \ref{theo6.7} for $R_{\nu}$ is not unique; however, by choosing that particular factorization, we obtain factors which satisfy some relations that will be useful later. Namely, we have
\begin{equation}
(R_{\nu})_{+}\diag (1,\lambda_{+}/\lambda_{-})=\diag (1,\lambda_{+}/\lambda_{-})T_{+}
\label{6.26a}\end{equation}
where the second factor on the right hand side is a matrix function belonging to $\mathcal{G}C_{\mu}^{+}(\dot{\mathbb{R}})^{2 \times 2}$, given by
\begin{equation}
T_{+}=\left[
                      \begin{array}{cc}
                        1 & \frac{1}{\nu} \\
                         0 & -\frac{\xi-z_{0}}{\xi+i} \\
                      \end{array}
                    \right] \text{ if } \nu \neq 0,
\label{6.26b}\end{equation}
 $T_{+}=(R_{0})_{+}$ if $\nu=0$.

The following property regarding $R_{\nu}$ will also be used later.
\begin{lem}\label{lem6.9}
Let $R_{\nu}$ be given by $(\ref{6.22})$; we have
\begin{equation}
\diag (1,\lambda_{-}/\lambda_{+})R_{\nu}\diag (1,\lambda_{+}/\lambda_{-})=\left[
                      \begin{array}{cc}
                        \nu & 1 \\
                         \frac{(\xi-i)(\xi+ik_{0})}{(\xi+i)(\xi-ik_{0})} &  \nu \\
                      \end{array}
                    \right]=:\tilde{R}_{\nu},
\label{6.26c}\end{equation}
and $\tilde{R}_{\nu}$ admit a canonical WH factorization $\tilde{R}_{\nu}=(\tilde{R}_{\nu})_{-}(\tilde{R}_{\nu})_{+}$ with
\begin{equation}
(\tilde{R}_{\nu})_{-}=\left[
                      \begin{array}{cc}
                        1 & 0 \\
                         \nu & -(\nu^2-1)\frac{\xi-k_{0}/z_{0}}{\xi-ik_{0}}  \\
                      \end{array}
                    \right], \quad (\tilde{R}_{\nu})_{+}=\left[
                      \begin{array}{cc}
                        \nu & 1 \\
                         \frac{\xi-z_{0}}{\xi+i}  & 0  \\
                      \end{array}
                    \right].
\label{6.26d}\end{equation}
\end{lem}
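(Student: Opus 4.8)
The plan is to verify both assertions by direct computation, exploiting the fact that the conjugation in \eqref{6.26c} is by constant-determinant diagonal matrices, so it preserves the class of WH-factorizable matrices and only shifts partial indices trivially. First I would check the identity \eqref{6.26c}: writing out $\diag(1,\lambda_-/\lambda_+)R_\nu\diag(1,\lambda_+/\lambda_-)$ with $R_\nu$ as in \eqref{6.22}, the $(1,1)$ and $(2,2)$ entries are unchanged and equal to $\nu$; the $(1,2)$ entry becomes $\frac{\lambda_+}{\lambda_-}\cdot\frac{\xi-i}{\xi+i}=\frac{\lambda_+}{\lambda_-}\cdot\frac{\lambda_-}{\lambda_+}=1$; and the $(2,1)$ entry becomes $\frac{\lambda_-}{\lambda_+}\cdot\frac{\xi+ik_0}{\xi-ik_0}=\frac{(\xi-i)(\xi+ik_0)}{(\xi+i)(\xi-ik_0)}$. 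This is exactly $\tilde R_\nu$, using $\lambda_\pm(\xi)=\xi\pm i$ from \eqref{3.20C}.

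Next I would verify the proposed factorization $\tilde R_\nu=(\tilde R_\nu)_-(\tilde R_\nu)_+$ with the factors given in \eqref{6.26d}, again by direct multiplication. The key algebraic input is \eqref{3.18A}, which gives $r_\nu(r_\nu)_* = (\nu^2-1)\frac{(\xi-z_0)(\xi-k_0/z_0)}{(\xi+i)(\xi-ik_0)}$; equivalently, the product of the off-diagonal structure is governed by $(\nu^2-1)\frac{\xi-k_0/z_0}{\xi-ik_0}\cdot\frac{\xi-z_0}{\xi+i} = \frac{(\xi-i)(\xi+ik_0)}{(\xi+i)(\xi-ik_0)}$, which is precisely the $(2,1)$ entry of $\tilde R_\nu$. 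Multiplying the two triangular factors: the $(1,1)$ entry is $\nu$; the $(1,2)$ entry is $1$; the $(2,1)$ entry is $\nu\cdot\nu + \bigl(-(\nu^2-1)\frac{\xi-k_0/z_0}{\xi-ik_0}\bigr)\cdot\frac{\xi-z_0}{\xi+i}$, and by the displayed identity this equals $\nu^2 - \frac{(\xi-i)(\xi+ik_0)}{(\xi+i)(\xi-ik_0)}\cdot(-1)\cdot(-1)$... — so one must be careful with the sign: the product should collapse to the correct $(2,1)$ entry of $\tilde R_\nu$, and the $(2,2)$ entry to $\nu$. Finally I would check that $(\tilde R_\nu)_\pm$ have the right analyticity: $(\tilde R_\nu)_-$ has entries analytic in $\mathbb{C}^-$ (the pole is at $ik_0\in\mathbb{C}^+$... actually $ik_0$ lies in the upper half-plane, so $\frac{\xi-k_0/z_0}{\xi-ik_0}$ is analytic in $\mathbb{C}^-$ since $z_0\in\mathbb{C}^-$ gives $k_0/z_0\in\mathbb{C}^+$ as noted before Theorem \ref{theo3.5}), and $(\tilde R_\nu)_+$ has entries analytic in $\mathbb{C}^+$ with $z_0\in\mathbb{C}^-$; moreover $\det(\tilde R_\nu)_- = (\nu^2-1)\frac{\xi-k_0/z_0}{\xi-ik_0}\in\mathcal{G}C_\mu^-(\dot{\mathbb{R}})$ (using $\nu^2\neq 1$, cf. the remark after Theorem \ref{theo6.7}) and $\det(\tilde R_\nu)_+ = -\frac{\xi-z_0}{\xi+i}\in\mathcal{G}C_\mu^+(\dot{\mathbb{R}})$.

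The main obstacle is purely bookkeeping: keeping the signs and the pairing of zeros/poles straight in the product $(\tilde R_\nu)_-(\tilde R_\nu)_+$, and confirming that each determinant is genuinely invertible in $C_\mu^\pm(\dot{\mathbb{R}})$ — i.e. nonvanishing on $\dot{\mathbb{R}}$ and at $\infty$ — which relies on $z_0\in\mathbb{C}^-$ and $k_0/z_0\in\mathbb{C}^+$ (so neither $\xi-z_0$ nor $\xi-k_0/z_0$ vanishes on $\dot{\mathbb{R}}$) together with $\nu^2\neq 1$. Once these checks are complete, the factorization being canonical (diagonal middle factor $I$) is immediate, and the lemma follows; alternatively one could deduce \eqref{6.26d} from Theorem \ref{theo6.7} by conjugating $R_\nu=(R_\nu)_-(R_\nu)_+$ with $\diag(1,\lambda_\mp/\lambda_\pm)$ and relabelling, but the direct verification is shorter.
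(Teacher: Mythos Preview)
Your approach is correct and matches the paper's proof exactly: the paper also verifies \eqref{6.26c} by inspection and checks $\tilde R_\nu=(\tilde R_\nu)_-(\tilde R_\nu)_+$ directly using the second equality in \eqref{3.18A}, then notes that $(\tilde R_\nu)_\pm\in\mathcal{G}(C_\mu^\pm(\dot{\mathbb{R}}))^{2\times2}$ is simple to check. Your only slips are bookkeeping ones you already flag --- the identity from \eqref{3.18A} reads $(\nu^2-1)\frac{(\xi-z_0)(\xi-k_0/z_0)}{(\xi+i)(\xi-ik_0)}=\nu^2-\frac{(\xi-i)(\xi+ik_0)}{(\xi+i)(\xi-ik_0)}$, which makes the $(2,1)$ entry come out immediately, and $\det(\tilde R_\nu)_-$ carries an extra minus sign --- neither affects the argument.
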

\begin{proof}
 The equality in (\ref{6.26c}) is obvious and it is easy to verify  that \linebreak $\tilde{R}_{\nu}=(\tilde{R}_{\nu})_{-}(\tilde{R}_{\nu})_{+}$, taking the second equality of (\ref{3.18A}) into account. The relations $(\tilde{R}_{\nu})_{\pm}\in \mathcal{G}(C^{\pm}_{\mu}(\dot{\mathbb{R}}))^{2 \times 2}$ are also simple to check.
\end{proof}
\begin{thm} \label{theo6.10}
If $\nu \neq 0$ (and $\nu^2 \neq 1$), $R_{\nu}^2$ admits a canonical WH factorization $R_{\nu}^2=(R^2_{\nu})_{-}(R^2_{\nu})_{+}^{-1}$ with $(R^2_{\nu})_{\pm}=[r_{ij}^{\pm}]$ given by
\begin{itemize}
  \item []$r_{11}^+=\frac{1}{\nu}\frac{\xi-i}{\xi-z_{0}}-\frac{1}{\nu}\frac{\xi-i}{\xi+i}r_{21}^+$
  \item []$r_{21}^+=-\frac{1}{\nu^2-1}\frac{(\xi+i)^2}{(\xi-z_{0})(\xi-k_{0}/z_{0})}\left[\frac{\xi-ik_{0}}{\xi-z_{0}}(\nu^2+\frac{(\xi-i)(\xi+ik_{0})}{(\xi+i)(\xi-ik_{0})})-B\nu\right]$
 \item []$r_{12}^+=\frac{1}{\nu}\frac{\xi+i}{\xi-z_{0}}-\frac{1}{\nu}\frac{\xi-i}{\xi+i}r_{22}^+$
  \item []$r_{22}^+=\tilde{B}\frac{\nu}{\nu^2-1}(\frac{\xi+i}{\xi-z_{0}})^2$
   \item []$r_{11}^-=B\frac{\xi-i}{\xi-ik_{0}}$
  \item []$r_{21}^-=\frac{B\nu(\xi+i)-(\nu^2-1)(\xi-k_{0}/z_{0})}{\xi-ik_{0}}$,
  \item []
  $r_{12}^-=\frac{1}{\nu}\frac{\xi+i}{\xi-z_{0}}\left(\nu \tilde{B}\frac{(\xi-i)(\xi-k_{0}/z_{0})}{(\xi+i)(\xi-ik_{0})}+\nu^2+\frac{(\xi-i)(\xi+ik_{0})}{(\xi+i)(\xi-ik_{0})}\right)$
  \item []$r_{22}^-=\nu\frac{\xi+i}{\xi-i}r_{12}^- - (\nu^2-1)\frac{(\xi+i)(\xi-k_{0}/z_{0})}{(\xi-i)(\xi-ik_{0})}=\frac{\xi+i}{\xi-z_{0}}\left(2\frac{\xi+ik_{0}}{\xi-ik_{0}}+\tilde{B}\nu\frac{\xi-k_{0}/z_{0}}{\xi-ik_{0}}\right)$,
      \end{itemize}
 where  $$\hspace{-1.5cm}B=\frac{1}{\nu}\left[\frac{\xi-ik_{0}}{\xi-z_{0}}\left(\nu^2+\frac{(\xi-i)(\xi+ik_{0})}{(\xi+i)(\xi-ik_{0})}\right)\right]_{\xi=k_{0}/z_{0}},
            $$ $$\tilde{B}=-\frac{1}{\nu}\left[\frac{(\xi+i)(\xi-ik_{0})}{(\xi-i)(\xi-k_{0}/z_{0})}\left(\nu^2+\frac{(\xi-i)(\xi+ik_{0})}{(\xi+i)(\xi-ik_{0})}\right)\right]_{\xi=z_{0}}.
            $$
\end{thm}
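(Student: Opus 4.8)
The plan is to prove the theorem by verifying directly that the displayed matrices $(R^2_{\nu})_{\pm}$ produce the asserted factorization; the formulas themselves can be motivated either by composing the canonical factorization $R_{\nu}=(R_{\nu})_{-}(R_{\nu})_{+}$ of Theorem \ref{theo6.7} with itself and factoring the rational middle block $(R_{\nu})_{+}(R_{\nu})_{-}$, or by applying $\mathcal{I}^{-1}$ to a $\Sigma$-factorization of $r_{\nu}^{2}=\mathcal{I}(R_{\nu}^{2})$, but for the proof it is cleanest to take them as given and to check three things: that $(R^2_{\nu})_{-}(R^2_{\nu})_{+}^{-1}=R_{\nu}^{2}$, that $(R^2_{\nu})_{-}\in\mathcal{G}(C_{\mu}^{-}(\dot{\mathbb{R}}))^{2\times 2}$, and that $(R^2_{\nu})_{+}\in\mathcal{G}(C_{\mu}^{+}(\dot{\mathbb{R}}))^{2\times 2}$. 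Once these hold, $R_{\nu}^{2}=(R^2_{\nu})_{-}\cdot (R^2_{\nu})_{+}^{-1}$ is by definition a canonical WH factorization, so nothing further about partial indices needs to be said.

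First I would record the two scalar identities on which the whole computation rests. Since $(z_{0},\tau_{0})$ and $(k_{0}z_{0}^{-1},k_{0}z_{0}^{-2}\tau_{0})$ are zeros of $r_{\nu}$ (Theorem \ref{theo3.5}) and neither projects onto a pole of $(r_{\nu})_{*}$, evaluating the identity (\ref{3.18A}) at $\xi=z_{0}$ and at $\xi=k_{0}/z_{0}$ gives $\nu^{2}=\dfrac{(z_{0}-i)(z_{0}+ik_{0})}{(z_{0}+i)(z_{0}-ik_{0})}=\dfrac{(k_{0}/z_{0}-i)(k_{0}/z_{0}+ik_{0})}{(k_{0}/z_{0}+i)(k_{0}/z_{0}-ik_{0})}$. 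These are exactly the relations encoded by the constants $B$ and $\tilde{B}$: $B$ is chosen so that the bracketed factor appearing in $r_{21}^{+}$ vanishes at $\xi=k_{0}/z_{0}\in\mathbb{C}^{+}$, which cancels the only apparent pole of the $+$ factor inside $\mathbb{C}^{+}$, and $\tilde{B}$ is chosen so that the bracketed factor appearing in $r_{12}^{-}$ (hence also the one in $r_{22}^{-}$) vanishes at $\xi=z_{0}\in\mathbb{C}^{-}$, which cancels the only apparent pole of the $-$ factor inside $\mathbb{C}^{-}$. After these cancellations the entries of $(R^2_{\nu})_{-}$ have poles only at the branch point $ik_{0}\in\mathbb{C}^{+}$ and are bounded at $\infty$ by a degree count, so that $(R^2_{\nu})_{-}\in(C_{\mu}^{-}(\dot{\mathbb{R}}))^{2\times 2}$, and symmetrically $(R^2_{\nu})_{+}\in(C_{\mu}^{+}(\dot{\mathbb{R}}))^{2\times 2}$.

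Next I would verify the product identity, most conveniently in the form $R_{\nu}^{2}(R^2_{\nu})_{+}=(R^2_{\nu})_{-}$, by expanding both sides as $2\times 2$ matrices of rational functions in $\xi$ and using (\ref{3.18A}) to replace the recurring combinations $(r_{\nu})(r_{\nu})_{*}$ and $\nu^{2}+\dfrac{(\xi-i)(\xi+ik_{0})}{(\xi+i)(\xi-ik_{0})}$; each of the four resulting scalar equalities then reduces to a routine manipulation using the two identities just recorded. Finally, for invertibility it suffices to show $\det(R^2_{\nu})_{-}\in\mathcal{G}C_{\mu}^{-}(\dot{\mathbb{R}})$ and $\det(R^2_{\nu})_{+}\in\mathcal{G}C_{\mu}^{+}(\dot{\mathbb{R}})$; since $\det\mathcal{I}^{-1}(g)=gg_{*}$ (cf. (\ref{6.21})) one has $\det R_{\nu}^{2}=\big(r_{\nu}(r_{\nu})_{*}\big)^{2}=(\nu^{2}-1)^{2}\Big(\dfrac{(\xi-z_{0})(\xi-k_{0}/z_{0})}{(\xi+i)(\xi-ik_{0})}\Big)^{2}$, which is nonzero on $\dot{\mathbb{R}}$ and at $\infty$ and has index $0$; together with $\det(R^2_{\nu})_{-}\big(\det(R^2_{\nu})_{+}\big)^{-1}=\det R_{\nu}^{2}$ and the analyticity of the entries of $(R^2_{\nu})_{\mp}$ in $\mathbb{C}^{\mp}$, a short computation then shows that $\det(R^2_{\nu})_{-}$ has all of its zeros and poles in $\mathbb{C}^{+}$ while $\det(R^2_{\nu})_{+}$ has all of its zeros and poles in $\mathbb{C}^{-}$, which is exactly what is required.

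The main obstacle is purely computational: carrying out the entrywise $2\times 2$ rational matrix product $R_{\nu}^{2}(R^2_{\nu})_{+}=(R^2_{\nu})_{-}$ and simplifying each entry to agreement, while keeping careful track of the cancellations governed by $B$, $\tilde{B}$ and the two evaluations of (\ref{3.18A}). There is no conceptual difficulty: the required algebra memberships hold essentially by construction of $B$ and $\tilde{B}$, and the existence of such a product factorization with factors in $\mathcal{G}(C_{\mu}^{\pm}(\dot{\mathbb{R}}))^{2\times 2}$ makes it canonical automatically.
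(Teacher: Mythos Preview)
Your verification strategy is sound and would yield a valid proof, but it is genuinely different from the paper's approach. The paper does not take the formulas as given and verify them; instead it \emph{derives} them. It first invokes Corollary~\ref{cor6.3} to guarantee a priori that $R_{\nu}^{2}$ has a canonical factorization, then writes the ad hoc rational product $R_{\nu}^{2}=M_{-}M_{+}$ with
\[
M_{-}=\begin{pmatrix}\nu & \frac{\xi-i}{\xi-ik_{0}}\\ \frac{\xi+ik_{0}}{\xi-ik_{0}} & \nu\frac{\xi+i}{\xi-ik_{0}}\end{pmatrix},\qquad
M_{+}=\begin{pmatrix}\nu & \frac{\xi-i}{\xi+i}\\ \frac{\xi+ik_{0}}{\xi+i} & \nu\frac{\xi-ik_{0}}{\xi+i}\end{pmatrix},
\]
so that the homogeneous problem $R_{\nu}^{2}\phi_{+}=\phi_{-}$ becomes the rational equation $M_{+}\phi_{+}=M_{-}^{-1}\phi_{-}$. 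Two independent solutions are then produced by imposing the side conditions $\phi_{1+}(i)=0,\ \phi_{2+}(i)\neq 0$ and $\phi_{1-}(-i)\neq 0,\ \phi_{2-}(-i)=0$; these give the two columns of $(R_{\nu}^{2})_{\pm}$, and the constants $B,\tilde B$ emerge naturally from the pole-removal that these conditions force. Canonicity is concluded not via determinants but by checking that $(R_{\nu}^{2})_{-}(-i)$ is invertible and citing the criterion of \cite[Theorem~3.1]{CSBastos2}.

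What each buys: the paper's route explains where the formulas and the mysterious constants $B,\tilde B$ come from, and its invertibility step is a one-point evaluation rather than a determinant analysis. Your route is more self-contained (no external criterion, no a priori appeal to Corollary~\ref{cor6.3}) and makes the role of $B,\tilde B$ as pole-killers at $k_{0}/z_{0}$ and $z_{0}$ completely transparent; your reading of (\ref{3.18A}) at $z_{0}$ and $k_{0}/z_{0}$ is the right engine for the entrywise check. The one place your outline is thinner than the paper is the invertibility argument: the observation that $\det(R_{\nu}^{2})_{-}/\det(R_{\nu}^{2})_{+}=\det R_{\nu}^{2}$ together with analyticity of the entries in the correct half-planes does not by itself rule out zeros of $\det(R_{\nu}^{2})_{-}$ in $\mathbb{C}^{-}$; you should either compute the two determinants explicitly from the displayed entries, or simply evaluate $(R_{\nu}^{2})_{-}$ at $\xi=-i$ as the paper does.
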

\begin{proof}
From Corollary \ref{cor6.3}, $R_{\nu}^2$ admits a canonical WH factorization. We have
\begin{equation*}\label{K}
R_{\nu}^2=\left[
            \begin{array}{cc}
              \nu^2+\frac{(\xi-i)(\xi+ik_{0})}{(\xi+i)(\xi-ik_{0})} & 2\nu\frac{\xi-i}{\xi+i} \\
              2\nu\frac{\xi+ik_{0}}{\xi-ik_{0}} & \nu^2+\frac{(\xi-i)(\xi+ik_{0})}{(\xi+i)(\xi-ik_{0})} \\
            \end{array}
          \right]=M_{-}M_{+}
\end{equation*}
where \begin{equation*}\label{L}M_{-}=\left[
              \begin{array}{cc}
                \nu & \frac{\xi-i}{\xi-ik_{0}} \\
                \frac{\xi+ik_{0}}{\xi-ik_{0}} & \nu\frac{\xi+i}{\xi-ik_{0}} \\
              \end{array}
            \right], \quad M_{+}=\left[
              \begin{array}{cc}
                \nu & \frac{\xi-i}{\xi+i} \\
                \frac{\xi+ik_{0}}{\xi+i} & \nu\frac{\xi-ik_{0}}{\xi+i} \\
              \end{array}
            \right],\end{equation*}
            so that the equation $R_{\nu}^2\phi_{+}=\phi_{-}$, $\phi_{\pm} \in (C_{\mu}^{\pm})^2$ is equivalent to
\begin{equation}\label{M}
M_{+}\phi_{+}=M_{-}^{-1}\phi_{-}.
\end{equation}
Solving (\ref{M}) under the condition $\phi_{1+}(i)=0$, $\phi_{2+}(i)\neq 0$, we obtain \linebreak$\phi_{+}=(r_{11}^+, r_{21}^+)$, $\phi_{-}=(r_{11}^-, r_{21}^-)$; solving the same equation under the condition $\phi_{1-}(-i)\neq 0$, $\phi_{2-}(-i)= 0$, we obtain $\phi_{+}=(r_{12}^+, r_{22}^+)$, $\phi_{-}=(r_{12}^-, r_{22}^-)$. Thus we have $R_{\nu}^2(R_{\nu}^2)_{+}=(R_{\nu}^2)_{-}$ and, since $(R_{\nu}^2)_{-}(-i)$ is invertible, we conclude that $R_{\nu}^2=(R_{\nu}^2)_{-}(R_{\nu}^2)_{+}^{-1}$ is a canonical WH factorization (\cite{CSBastos2}, \linebreak Theorem 3.1).
\end{proof}
\begin{rem}\label{rem6.11}The canonical WH factorization of $R_{\nu}^2$ where $\nu=0$ can be obtained trivially since $R_{0}^2=[(\xi-i)(\xi+ik_{0})]/[(\xi+i)(\xi-ik_{0})]I$.

\end{rem}
The factors involved in the factorizations presented in Theorem \ref{theo6.10} and Remark \ref{rem6.11} also possess some useful properties. We have, in particular,
\begin{equation}\label{6.32A}
\diag (1,\lambda_{-}/\lambda_{+})(R_{\nu}^2)_{-}=\tilde{R}_{-}\diag(\lambda_{-}/\lambda_{+},1)
\end{equation}
where
\begin{equation}\label{6.32B}
\tilde{R}_{-}=\left[
                \begin{array}{cc}
                  B\frac{\xi+i}{\xi-ik_{0}} & r_{12}^{-} \\
                  r_{21}^{-} & \nu r_{12}^{-}-(\nu^2-1)\frac{\xi-k_{0}/z_{0}}{\xi-ik_{0}} \\
                \end{array}
              \right]\in \mathcal{G}(C^{-}_{\mu}(\dot{\mathbb{R}}))^{2 \times 2}.
\end{equation}
Now we consider the factorization of the non-rational matrices $A_{\pm}$.
\begin{thm} \label{theo6.11}$A_{+}$ and $A_{-}$ admit the following non-canonical WH factorizations:
\begin{equation}\label{O}
A_{-}=\tilde{A}_{-}\diag (1,\lambda_{-}/\lambda_{+}), \quad A_{+}=\diag (1,\lambda_{+}/\lambda_{-})\tilde{A}_{+},
\end{equation}
with $\tilde{A}_{\pm} \in \mathcal{G}(C_{\mu}^{\pm}(\dot{\mathbb{R}}))^{2 \times 2}$ given by
\begin{equation}\label{P}
\tilde{A}_{-}=\left[
                \begin{array}{cc}
                  C & \frac{\xi-ik_{0}}{\rho_{-}} \\
                  \frac{\xi+ik_{0}}{\rho_{-}} & C\frac{\xi+i}{\xi-i} \\
                \end{array}
              \right], \quad \tilde{A}_{+}=\left[
                \begin{array}{cc}
                  C & \frac{\xi-ik_{0}}{\rho_{+}} \\
                  \frac{\rho_{+}}{\xi+i} & C\frac{\xi-i}{\xi+i} \\
                \end{array}
              \right].
\end{equation}
\end{thm}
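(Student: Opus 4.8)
The plan is to prove both factorizations in (\ref{O}) by direct verification of the matrix identities and then by checking that the proposed outer factors $\tilde A_\pm$ in (\ref{P}) are invertible within $(C_\mu^\pm(\dot{\mathbb R}))^{2\times 2}$. For the identities one simply multiplies out the right-hand sides of (\ref{O}), using only $\rho_+^2=(\xi+i)(\xi+ik_0)=\mathfrak p_1$ and $\rho_-^2=(\xi-i)(\xi-ik_0)=\mathfrak p_2$ (cf. (\ref{6.7})): for instance the $(1,2)$ entry of $\tilde A_-\,\diag(1,\lambda_-/\lambda_+)$ equals $\tfrac{\xi-ik_0}{\rho_-}\cdot\tfrac{\xi-i}{\xi+i}=\tfrac{(\xi-i)(\xi-ik_0)}{(\xi+i)\rho_-}=\tfrac{\rho_-}{\xi+i}$, which is the $(1,2)$ entry of $A_-=\mathcal I^{-1}(\alpha_-)$ (computed from (\ref{6.21})); the remaining entries, and the identity for $A_+$, are checked the same way. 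In effect, $\tilde A_\pm$ is nothing but $A_\pm$ with one power of $\lambda_\mp/\lambda_\pm$ cancelled from its second column, respectively its second row.

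The substantive step is to verify that $\tilde A_+\in(C_\mu^+(\dot{\mathbb R}))^{2\times 2}$ and $\tilde A_-\in(C_\mu^-(\dot{\mathbb R}))^{2\times 2}$. The only point requiring care is the placement of the branch cuts of $\rho_\pm$: since $k_0>1$, the cut $[-i,-ik_0]$ of $\rho_+=\sqrt{(\xi+i)(\xi+ik_0)}$ and its branch points lie in the open lower half-plane, so $\rho_+$ is holomorphic and zero-free in a neighbourhood of $\mathbb C^+\cup\dot{\mathbb R}$, and symmetrically $\rho_-$ is holomorphic and zero-free in a neighbourhood of $\mathbb C^-\cup\dot{\mathbb R}$. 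Hence every entry of $\tilde A_+$ is holomorphic in $\mathbb C^+$ (the possible singularities --- the pole of $C\,\tfrac{\xi-i}{\xi+i}$ at $\xi=-i$ and the branch points of $\rho_+$ --- all lying in $\mathbb C^-$), is bounded there since each of $C$, $\tfrac{\xi-ik_0}{\rho_+}$, $\tfrac{\rho_+}{\xi+i}$, $C\,\tfrac{\xi-i}{\xi+i}$ is continuous on $\mathbb C^+\cup\dot{\mathbb R}$ with a finite limit at infinity, and is Hölder continuous on $\dot{\mathbb R}$; the argument for $\tilde A_-$ is symmetric. I would then compute $\det\tilde A_+=C^2\,\tfrac{\xi-i}{\xi+i}-\tfrac{\xi-ik_0}{\xi+i}=\tfrac{(C^2-1)\xi+i(k_0-C^2)}{\xi+i}$, which by (\ref{3.20A}), namely $C^2-1=k_0-C^2=(k_0-1)/2$, reduces to the nonzero constant $(k_0-1)/2$; likewise $\det\tilde A_-=(k_0-1)/2$, consistently with $\alpha_\pm(\alpha_\pm)_*$ as computed in (\ref{3.20B}). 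Since $\det\tilde A_\pm\in\mathcal G C_\mu^\pm(\dot{\mathbb R})$, the adjugate formula $\tilde A_\pm^{-1}=(\det\tilde A_\pm)^{-1}(\tilde A_\pm)^{\ast}$ shows that $\tilde A_\pm^{-1}\in(C_\mu^\pm(\dot{\mathbb R}))^{2\times 2}$ as well, hence $\tilde A_\pm\in\mathcal G(C_\mu^\pm(\dot{\mathbb R}))^{2\times 2}$.

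With these two steps, (\ref{O}) displays $A_-$ and $A_+$ in the form (\ref{6.6.1})--(\ref{6.7.1}), with inner factors equal to $\tilde A_\pm$ and to the identity, and with diagonal middle factors $\diag((\lambda_-/\lambda_+)^0,(\lambda_-/\lambda_+)^1)$ for $A_-$ and $\diag((\lambda_-/\lambda_+)^0,(\lambda_-/\lambda_+)^{-1})$ for $A_+$; since these middle factors are not the identity, the two factorizations are non-canonical, with partial indices $0,1$ for $A_-$ and $0,-1$ for $A_+$. The main obstacle I anticipate is nothing deeper than the half-plane bookkeeping of the middle step --- making sure the square-root cuts and branch points of $\rho_\pm$ stay clear of the closed half-plane in which each factor must be analytic --- while everything else reduces to the elementary algebra of the identities together with the constants recorded in (\ref{3.20A})--(\ref{3.20B}).
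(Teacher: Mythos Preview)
Your proposal is correct and follows essentially the same approach as the paper's own proof: direct verification of the matrix identities in (\ref{O})--(\ref{P}), together with the observation that $\tilde A_\pm\in(C_\mu^\pm(\dot{\mathbb R}))^{2\times2}$ and $\det\tilde A_\pm=(k_0-1)/2\neq0$. You simply supply more detail on the branch-cut bookkeeping and the determinant computation via (\ref{3.20A}), which the paper leaves implicit.
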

\begin{proof}The equalities in (\ref{O}), (\ref{P}) can be easily verified; on the other hand, it is clear that $\tilde{A}_{\pm} \in (C_{\mu}^{\pm}(\dot{\mathbb{R}}))^{2 \times 2}$ and $\det \tilde{A}_{\pm}=(k_{0}-1)/2 \in \mathbb{C}\backslash \{0\}$.
\end{proof}
Analogously to what happened in the previous factorizations, the factors $\tilde{A}_{\pm}$ possess some properties which will later be helpful.
\begin{lem}\label{lem6.13}
For $\tilde{A}_{-}$ defined by (\ref{P}) we have  $\diag (1,\lambda_{-}/\lambda_{+})\tilde{A}_{-}\linebreak =B_{-}\diag (\lambda_{-}/\lambda_{+},1)$
 where
\begin{equation}\label{P2}
B_{-}=\left[
          \begin{array}{cc}
            C\frac{\xi+i}{\xi-i} & \frac{\xi-ik_{0}}{\rho_{-}} \\
            \frac{\xi+ik_{0}}{\rho_{-}} & C \\
          \end{array}
        \right] \in \mathcal{G}(C_{\mu}^-(\dot{\mathbb{R}}))^{2 \times 2}
\end{equation}
\end{lem}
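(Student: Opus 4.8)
The plan is to verify the claimed identity by a direct matrix computation and then argue membership of the resulting factor $B_-$ in $\mathcal{G}(C_\mu^-(\dot{\mathbb{R}}))^{2\times 2}$. First I would write out the left-hand side explicitly: multiplying $\tilde A_-$ from (\ref{P}) on the left by $\diag(1,\lambda_-/\lambda_+)$ scales only the second row, giving
\begin{equation*}
\diag(1,\lambda_-/\lambda_+)\tilde A_- = \left[\begin{array}{cc} C & \frac{\xi-ik_0}{\rho_-} \\ \frac{\lambda_-}{\lambda_+}\cdot\frac{\xi+ik_0}{\rho_-} & \frac{\lambda_-}{\lambda_+}\cdot C\frac{\xi+i}{\xi-i} \end{array}\right].
\end{equation*}
Using $\lambda_\pm(\xi)=\xi\pm i$ from (\ref{3.20C}), the $(2,2)$ entry simplifies: $\frac{\xi-i}{\xi+i}\cdot C\frac{\xi+i}{\xi-i}=C$. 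Then I would compute the right-hand side $B_-\diag(\lambda_-/\lambda_+,1)$, which scales the first column of $B_-$ by $\lambda_-/\lambda_+$; comparing entry by entry against the matrix above shows that the required $B_-$ is exactly the one displayed in (\ref{P2}), since $C\frac{\xi+i}{\xi-i}\cdot\frac{\xi-i}{\xi+i}=C$ recovers the $(1,1)$ entry and $\frac{\xi+ik_0}{\rho_-}\cdot\frac{\xi-i}{\xi+i}$ recovers the shifted $(2,1)$ entry. This is a routine identity and poses no real difficulty.

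The substantive point is to check that $B_-\in\mathcal{G}(C_\mu^-(\dot{\mathbb{R}}))^{2\times 2}$. Here I would argue as follows. The entries of $B_-$ involve $C$ (a nonzero constant by (\ref{3.20})), the rational factor $\frac{\xi+i}{\xi-i}$, and the branch $\rho_-=\sqrt{(\xi-i)(\xi-ik_0)}$. Recall from the paragraph preceding (\ref{3.19}) that $\rho_-$ is analytic in the complex plane cut along $[i,ik_0]$, hence analytic and bounded away from $0$ and $\infty$ on a neighbourhood of $\mathbb{C}^-\cup\dot{\mathbb{R}}$; consequently $\rho_-^{\pm 1}$ as well as $\frac{\xi-ik_0}{\rho_-}=\sqrt{(\xi-ik_0)/(\xi-i)}$ and $\frac{\xi+ik_0}{\rho_-}$ restrict to elements of $C_\mu^-(\dot{\mathbb{R}})$. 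The factor $\frac{\xi+i}{\xi-i}$ has its pole at $i\notin\mathbb{C}^-$ and is likewise in $\mathcal{G}C_\mu^-(\dot{\mathbb{R}})$. Hence every entry of $B_-$ lies in $C_\mu^-(\dot{\mathbb{R}})$.

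For invertibility it suffices to show $\det B_-\in\mathcal{G}C_\mu^-(\dot{\mathbb{R}})$. Computing directly,
\begin{equation*}
\det B_- = C^2\frac{\xi+i}{\xi-i} - \frac{(\xi-ik_0)(\xi+ik_0)}{\rho_-^2} = C^2\frac{\xi+i}{\xi-i} - \frac{\xi^2+k_0^2}{(\xi-i)(\xi-ik_0)},
\end{equation*}
which I would simplify and compare with $\det\tilde A_-=(k_0-1)/2$ from Theorem \ref{theo6.11}: since $\det B_-=\det\big(\diag(1,\lambda_-/\lambda_+)\tilde A_-\big)\cdot\det\big(\diag(\lambda_-/\lambda_+,1)\big)^{-1}=\det\tilde A_-$, one gets $\det B_-=(k_0-1)/2\in\mathbb{C}\setminus\{0\}$ by (\ref{3.20A}). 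This gives $B_-^{-1}\in(C_\mu^-(\dot{\mathbb{R}}))^{2\times2}$ by Cramer's rule, completing the verification. The only mild obstacle is keeping track of which branch of the square root appears in each entry and confirming the cancellations with $\lambda_\pm$; once the bookkeeping is organized as above, the lemma follows.
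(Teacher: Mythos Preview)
Your proposal is correct and follows exactly the approach indicated in the paper, whose proof reads simply ``Straightforward.'' Your entrywise verification of the identity together with the determinant computation $\det B_-=\det\tilde A_-=(k_0-1)/2$ is precisely the intended routine check.
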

\begin{proof}Straightforward.
\end{proof}
Now we can present WH factorizations for the middle factor $\mathcal{M}$ in (\ref{A}) when it is not of the form $I, R_{\nu}$ (see Theorem \ref{theo6.7}), $R_{\nu}^2$ (see Theorem \ref{theo6.10} and Remark \ref{rem6.11}) or $A_{-}$ (see Theorem \ref{theo6.11}).
 \begin{thm} We have the following WH factorizations:
\begin{equation}\label{Q}
\hspace{-4,3cm}A_{-}A_{+}=\tilde{A}_{-}\tilde{A}_{+},
\end{equation}
\begin{equation}\label{Q1}
\hspace{-1,7cm}A_{-}R_{\nu}A_{+}=(\tilde{A}_{-}(\tilde{R}_{\nu})_{-})((\tilde{R}_{\nu})_{+}\tilde{A}_{+})
\end{equation} with $(\tilde{R}_{\nu})_{\pm}$ given by (\ref{6.26d}),
\begin{equation}\label{Q2}
\hspace{-1,7cm}A_{-}R_{\nu}^2=(\tilde{A}_{-}\tilde{R}_{-})\diag (\lambda_{-}/\lambda_{+},1)(R^2_{\nu})_{+}
\end{equation} with $\tilde{R}_{-}$ given by (\ref{6.32B}),
 \begin{equation}\label{R}
\hspace{-2cm}A^2_{-}A_{+}
=(\tilde{A}_{-}B_{-})\diag (\lambda_{-}/\lambda_{+},1)\tilde{A}_{+}
\end{equation}with $B_{-}$ given by (\ref{P2}),
\begin{equation}\label{R1}
A^2_{-}R_{\nu}A_{+}
=(\tilde{A}_{-}B_{-}(R_{\nu})_{-})\diag (\lambda_{-}/\lambda_{+},1)(T_{+}\tilde{A}_{+})
\end{equation} with $T_{+}$ given by (\ref{6.26b}), and $A_{-}R_{\nu}
=(\tilde{A}_{-}(\tilde{R}_{\nu})_{-}Q_{-})\diag (1,\lambda_{-}/\lambda_{+})Q_{+}^{-1}$
with
\begin{equation}\label{T2}
Q_{+}=\left[
          \begin{array}{cc}
            \frac{\xi+i}{\xi-z_{0}} & \frac{\xi+i}{\xi-z_{0}} \\
            \varrho \frac{\xi+i}{\xi-i}-\nu\frac{(\xi+i)^2}{(\xi-i)(\xi-z_{0})} & (\delta-\nu\frac{\xi+i}{\xi-z_{0}})\frac{\xi+i}{\xi-i}-1 \\
          \end{array}
        \right]
\end{equation}

\vspace{0,2cm}

\begin{equation*}\label{P3}
\hspace{-4,2cm}Q_{-}=\left[
          \begin{array}{cc}
            \varrho & \varrho\frac{\xi+i}{\xi-i}-1 \\
            1 & \frac{\xi+i}{\xi-i} \\
          \end{array}
        \right],
\end{equation*} where \begin{equation*}\label{P4}\hspace{-3,9cm}\varrho =\nu (\frac{\xi+i}{\xi-z_{0}})_{\xi=i}=\frac{2i\nu}{i-z_{0}}.\end{equation*}
\end{thm}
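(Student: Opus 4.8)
The plan is to derive each of the seven identities by substituting into its left-hand side the (non-canonical) WH factorizations of the elementary factors $A_{\pm}$, $R_{\nu}$, $R_{\nu}^2$ and $\tilde{R}_{\nu}$ already obtained in Theorems \ref{theo6.7}, \ref{theo6.10}, \ref{theo6.11} and Lemma \ref{lem6.9}, and then using the intertwining relations (\ref{6.26a}), (\ref{6.26c}), (\ref{6.32A}) and Lemma \ref{lem6.13} to push all diagonal middle factors to one side, where they collapse to a single $\diag$ of powers of $\lambda_{-}/\lambda_{+}$ (or to $I$). Once the right-hand side is brought to the form $\mathfrak{G}_{-}D\mathfrak{G}_{+}$ (or $\mathfrak{G}_{-}D\mathfrak{G}_{+}^{-1}$), it remains only to observe that $\mathfrak{G}_{\pm}$ is a finite product of matrix functions already known to lie in $\mathcal{G}(C_{\mu}^{\pm}(\dot{\mathbb{R}}))^{2\times2}$ --- a group, closed under products and inverses --- and that $\det\mathfrak{G}_{\pm}$ is a unit of $C_{\mu}^{\pm}(\dot{\mathbb{R}})$, which is immediate because each elementary factor has this property (for instance $\det\tilde{A}_{\pm}=(k_{0}-1)/2$, while $\det(R_{\nu})_{\pm}$, $\det(\tilde{R}_{\nu})_{\pm}$, $\det B_{-}$ and $\det T_{+}$ were computed above using (\ref{3.18A}) and (\ref{3.20A})). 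After the substitutions no genus-$1$ material is involved: everything is matrix algebra over $C_{\mu}(\dot{\mathbb{R}})$.

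For the routine identities the reduction is short. In (\ref{Q}) the factor $\diag(1,\lambda_{-}/\lambda_{+})$ from $A_{-}$ and the factor $\diag(1,\lambda_{+}/\lambda_{-})$ from $A_{+}$ cancel, leaving the canonical factorization $\tilde{A}_{-}\tilde{A}_{+}$. In (\ref{Q1}) one first uses (\ref{6.26c}) to rewrite $\diag(1,\lambda_{-}/\lambda_{+})R_{\nu}=\tilde{R}_{\nu}\diag(1,\lambda_{-}/\lambda_{+})$, then inserts $\tilde{R}_{\nu}=(\tilde{R}_{\nu})_{-}(\tilde{R}_{\nu})_{+}$ and again cancels the diagonal factors against those of $A_{\pm}$. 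In (\ref{Q2}) the relation (\ref{6.32A}) moves $\diag(1,\lambda_{-}/\lambda_{+})$ past $(R_{\nu}^2)_{-}$ and produces the stated middle factor $\diag(\lambda_{-}/\lambda_{+},1)$, the case $\nu=0$ being read off directly from Remark \ref{rem6.11}. In (\ref{R}) one iterates: $A_{-}^2=\tilde{A}_{-}\,\diag(1,\lambda_{-}/\lambda_{+})\,\tilde{A}_{-}\,\diag(1,\lambda_{-}/\lambda_{+})=\tilde{A}_{-}B_{-}\,(\lambda_{-}/\lambda_{+})I$ by Lemma \ref{lem6.13}, and absorbing the $\diag(1,\lambda_{+}/\lambda_{-})$ of $A_{+}$ turns the central $(\lambda_{-}/\lambda_{+})I$ into $\diag(\lambda_{-}/\lambda_{+},1)$. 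Identity (\ref{R1}) is the same computation with $R_{\nu}=(R_{\nu})_{-}(R_{\nu})_{+}$ inserted, now using (\ref{6.26a}) (which defines $T_{+}$) to commute $(R_{\nu})_{+}$ past $\diag(1,\lambda_{+}/\lambda_{-})$. In all of these the outer factors are products of $\tilde{A}_{\pm}$, $B_{-}$, $\tilde{R}_{-}$, $(\tilde{R}_{\nu})_{\pm}$, $(R_{\nu})_{\pm}$, $(R_{\nu}^2)_{\pm}$, $T_{+}$ and constants, so their invertibility in $\mathcal{G}(C_{\mu}^{\pm}(\dot{\mathbb{R}}))^{2\times2}$ follows from Theorems \ref{theo6.7}, \ref{theo6.10}, \ref{theo6.11} and Lemmas \ref{lem6.9}, \ref{lem6.13}.

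The genuinely new work --- and what I expect to be the main obstacle --- is the last identity, for $A_{-}R_{\nu}$. Here $\det(A_{-}R_{\nu})$ has index $1$ (the rational factor $(\nu^2-1)(\xi-z_{0})(\xi-k_{0}/z_{0})/[(\xi+i)(\xi-ik_{0})]$ of (\ref{3.18A}) has index $0$), so there is a true non-canonical obstruction that does not disappear by cancellation. The plan is to substitute $A_{-}=\tilde{A}_{-}\diag(1,\lambda_{-}/\lambda_{+})$, use (\ref{6.26c}) to write $\diag(1,\lambda_{-}/\lambda_{+})R_{\nu}=\tilde{R}_{\nu}\diag(1,\lambda_{-}/\lambda_{+})$ and then $\tilde{R}_{\nu}=(\tilde{R}_{\nu})_{-}(\tilde{R}_{\nu})_{+}$; what remains is to WH-factor the matrix $N:=(\tilde{R}_{\nu})_{+}\diag(1,\lambda_{-}/\lambda_{+})$, whose determinant $-(\xi-z_{0})(\xi-i)/(\xi+i)^2$ vanishes at the branch point $i$, so $N^{-1}\notin(C_{\mu}^{+}(\dot{\mathbb{R}}))^{2\times2}$ and $N$ has partial indices $0,1$. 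I would obtain this factorization as in the proof of Theorem \ref{theo6.10}: solve the homogeneous vectorial RH problem associated with $N$ over $(C_{\mu}^{\pm}(\dot{\mathbb{R}}))^2$, imposing the appropriate vanishing of one solution at the branch point $i$ (where $\det N$ vanishes) and a normalization of the other, read the columns of $Q_{+}$ and $Q_{-}$ off the resulting ``$+$'' and ``$-$'' data, and invoke the criterion of \cite{CSBastos2} to confirm that $N=Q_{-}\diag(1,\lambda_{-}/\lambda_{+})Q_{+}^{-1}$ is a WH factorization. The delicate point is the constant $\varrho=2i\nu/(i-z_{0})$ appearing in $Q_{\pm}$: it is precisely the value that cancels the pole of $Q_{+}$ at the branch point $i$, so that $Q_{+}\in\mathcal{G}(C_{\mu}^{+}(\dot{\mathbb{R}}))^{2\times2}$ with $\det Q_{+}$ a unit of $C_{\mu}^{+}(\dot{\mathbb{R}})$, while $\det Q_{-}=1$. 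Substituting back gives $A_{-}R_{\nu}=(\tilde{A}_{-}(\tilde{R}_{\nu})_{-}Q_{-})\,\diag(1,\lambda_{-}/\lambda_{+})\,Q_{+}^{-1}$ with the stated $Q_{\pm}$; membership of $\tilde{A}_{-}(\tilde{R}_{\nu})_{-}Q_{-}$ in $\mathcal{G}(C_{\mu}^{-}(\dot{\mathbb{R}}))^{2\times2}$ then follows from Theorem \ref{theo6.11}, Lemma \ref{lem6.9} and the construction of $Q_{-}$, and the sub-case $\nu=0$ (where $z_{0}=-ik_{0}$ and $\varrho=0$) is covered by the same formulas.
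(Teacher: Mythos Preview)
Your proposal is correct and follows essentially the same route as the paper: each identity is obtained by substituting the factorizations of $A_{\pm}$, $R_{\nu}$, $R_{\nu}^2$, $\tilde{R}_{\nu}$ from Theorems \ref{theo6.7}, \ref{theo6.10}, \ref{theo6.11} and Lemma \ref{lem6.9} and then using the intertwining relations (\ref{6.26a}), (\ref{6.26c}), (\ref{6.32A}) and Lemma \ref{lem6.13} to collapse the diagonal factors. The only minor difference is in the last case $A_{-}R_{\nu}$: after reducing to the matrix $N=(\tilde{R}_{\nu})_{+}\diag(1,\lambda_{-}/\lambda_{+})$ the paper simply writes down $Q_{\pm}$ and asserts $N=Q_{-}\diag(1,\lambda_{-}/\lambda_{+})Q_{+}^{-1}$ (leaving it to direct verification), whereas you propose to \emph{derive} $Q_{\pm}$ by solving the associated homogeneous RH problem in the style of Theorem \ref{theo6.10}; this is a harmless elaboration of the same argument.
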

\begin{proof}
The canonical factorization (\ref{Q}) is a direct consequence of Theorem \nolinebreak \ref{theo6.11}. From the latter theorem and Lemma \ref{lem6.9}, we obtain (\ref{Q1}). The factorization in (\ref{Q2}) follows from Theorems \ref{theo6.11} and \ref{theo6.10}, Remark \ref{rem6.11} and (\ref{6.32A}). Theorem \ref{theo6.11} and Lemma \ref{lem6.13} imply (\ref{R}), while Theorem \ref{theo6.11} and (\ref{6.26a}) imply (\ref{R1}). Finally, we have  $A_{-}R_{\nu}=\tilde{A}_{-}\diag (1, \lambda_{-}/\lambda_{+})R_{\nu}$ where, by Lemma \ref{lem6.9},
$$\diag (1, \lambda_{-}/\lambda_{+})R_{\nu}=\tilde{R}_{\nu}\diag (1, \lambda_{-}/\lambda_{+})=$$
$$=(\tilde{R}_{\nu})_{-}(\tilde{R}_{\nu})_{+}\diag (1, \lambda_{-}/\lambda_{+})=
(\tilde{R}_{\nu})_{-}\left[
                         \begin{array}{cc}
                           \nu & \frac{\xi-i}{\xi+i} \\
                           \frac{\xi-z_{0}}{\xi+i} & 0 \\
                         \end{array}
                       \right]
$$ and $$\left[
                         \begin{array}{cc}
                           \nu & \frac{\xi-i}{\xi+i} \\
                           \frac{\xi-z_{0}}{\xi+i} & 0 \\
                         \end{array}
                       \right]=Q_{-}\diag (1, \lambda_{-}/\lambda_{+})Q_{+}^{-1}$$
with $Q_{\pm}$ defined by (\ref{T2})-(\ref{P4}).\end{proof}
As an illustration of the application of the previous results, we present the examples that follow.

\vspace{0.1cm}

\emph{\underline{Example 1.}} We consider here the factorization problem for $G \in C(Q)$ of the form
\begin{equation}
G=\exp (tL)
\label{6.31}
\end{equation}
where $t$ is a real parameter and $L$ is a rational matrix function
\begin{equation}
L=\left[
    \begin{array}{cc}
      0 & \frac{\xi-ik_{0}}{\xi+i} \\
      \frac{\xi+ik_{0}}{\xi-i}  & 0 \\
    \end{array}
  \right].
\label{6.32}
\end{equation}
This can be seen as the real line analogue of a factorization problem relative to the unit circle $\mathcal{S}^{1}$, arising when solving a Lax equation for some integrable systems (\cite{CSS1, Reyman}). We assume here for simplicity that $G$ takes the normal form (\ref{6.6}); for a general $L \in \mathcal{GR}^{2 \times 2}$, $G$ defined by (\ref{6.31}) can be reduced to the normal form by multiplication on the left and on the right by a rational matrix and its inverse, respectively (\cite{CMalheiro}). For $L$ given by (\ref{6.32}) we have
$L=(\xi^2+1)^{-1}S^{-1}\diag (\rho, -\rho)S$
where
\begin{equation}
S=\left[
    \begin{array}{cc}
      1 & \frac{\rho}{\mathfrak{p}_{1}} \\
     1  & -\frac{\rho}{\mathfrak{p}_{1}} \\
    \end{array}
  \right],
\label{6.33}
\end{equation}
so that $G$ can be diagonalized with eigenvalues $g_{1}=\exp (t\rho(\xi^2+1)^{-1})$, $g_{2}=\exp (-t\rho(\xi^2+1)^{-1})$, for which $\ind g_{1}=\ind g_{2}=0$. An important question when studying that kind of factorization problem is to determine for which values of the (dynamical variable) $t$ does $G$ admit a canonical WH factorization, which is connected with the question of global existence of solutions to some Lax equations (see for instance \cite{CSS1, Reyman}). We have the following.
\begin{thm}
 $G$ admits a canonical bounded factorization for all $t \in \mathbb{R}$.
 \end{thm}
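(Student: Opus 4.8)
The plan is to obtain this from Corollary~\ref{cor6.3}. Since $L$ has zero trace, $\det G=\exp(t\operatorname{tr}L)=1$, so in (\ref{6.2}) we are in the case $m=0$; Corollary~\ref{cor6.3} then applies, and $G$ can fail to have a canonical bounded factorization only if its $\Sigma$-symbol $g$ admits a factorization of the form (\ref{6.13}), which by the equivalence (iii)$\Leftrightarrow$(iv) of Theorem~\ref{theo6.1} occurs precisely when $\beta_{g}=2nK+iK'\mod L$ with $n=\ind_{1}g$. Hence everything comes down to computing $\beta_{g}$ and showing that, for every $t\in\mathbb{R}$, it never lies on that set.

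First I would make the $\Sigma$-symbol explicit. As in the example, $G$ is diagonalized by the matrix $S$ of (\ref{6.33}) with eigenvalues $g_{1}=\exp\!\big(t\rho(\xi^{2}+1)^{-1}\big)$ and $g_{2}=\exp\!\big(-t\rho(\xi^{2}+1)^{-1}\big)$, where $\rho=\sqrt{\mathfrak{p}}$ is the branch fixed in Section~2. Since $\tau=\rho$ on $\Gamma_{1}$ and $\tau=-\rho$ on $\Gamma_{2}$, and since $g=T_{\Sigma}(g_{1},g_{2})$ restricts to $g_{j}$ on $\Gamma_{j}$, the $\Sigma$-symbol of $G$ is
\begin{equation*}
g(\xi,\tau)=\exp\!\left(\frac{t\,\tau}{\xi^{2}+1}\right),\qquad (\xi,\tau)\in\Gamma .
\end{equation*}
Then $\Log g:=t\,\tau(\xi^{2}+1)^{-1}$ is an admissible branch of the logarithm: on $\Gamma_{j}$ it equals $\pm\,t\sqrt{(k_{0}^{2}+\xi^{2})/(1+\xi^{2})}$, which is bounded and lies in $C_{\mu}(\dot{\mathbb{R}})$, so $\log g\in C_{\mu}(\Gamma)$; moreover $\ind_{j}g=\ind g_{j}=0$ since $g_{j}$ is the exponential of a real-valued function, so $n=0$ in condition~(iv) of Theorem~\ref{theo6.1}.

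Next I would compute the period. The quotient $\Log g/\tau=t(\xi^{2}+1)^{-1}$ is $*$-invariant, hence identified with $t(\xi^{2}+1)^{-1}\in C_{\mu}(\dot{\mathbb{R}})$; since $\Gamma_{1}$ and $\Gamma_{2}$ each project bijectively onto $\dot{\mathbb{R}}$ with the induced orientation and $\pm i\notin\dot{\mathbb{R}}$, we get
\begin{equation*}
\beta_{g}=\frac{k_{0}}{2\pi}\int_{\Gamma}\frac{\Log g}{\tau}\,d\xi
=\frac{k_{0}t}{2\pi}\left(\int_{\Gamma_{1}}+\int_{\Gamma_{2}}\right)\frac{d\xi}{\xi^{2}+1}
=\frac{k_{0}t}{2\pi}\cdot 2\int_{\mathbb{R}}\frac{dx}{x^{2}+1}=k_{0}t .
\end{equation*}
In particular $\beta_{g}$ is real.

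Finally, since $iK'+L=\{4Kj+i(2j'+1)K':j,j'\in\mathbb{Z}\}$, every element of this coset has nonzero imaginary part; hence the real number $\beta_{g}=k_{0}t$ is never congruent to $iK'$ (equivalently, to $2nK+iK'$ with $n=0$) modulo $L$, whatever $t\in\mathbb{R}$. Therefore $g$ does not admit a factorization (\ref{6.13}), and Corollary~\ref{cor6.3} yields a canonical bounded factorization of $G$ for every real $t$. The one step needing genuine care is the first: pinning down $g$ together with the orientations of $\Gamma_{1}$ and $\Gamma_{2}$, so that the period integral is evaluated correctly. Once it is clear that $\beta_{g}\in\mathbb{R}$, the conclusion is immediate, since the exceptional coset $iK'+L$ is disjoint from $\mathbb{R}$.
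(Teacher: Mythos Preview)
Your proof is correct and follows essentially the same route as the paper: both arguments identify $\det G=1$ (so $m=0$), invoke the criterion of Theorem~\ref{theo6.1}/Corollary~\ref{cor6.3}, compute $\beta_{g}=k_{0}t\in\mathbb{R}$ from the explicit $\Sigma$-symbol $g=\exp\!\big(t\tau(\xi^{2}+1)^{-1}\big)$, and observe that a real number can never lie in $iK'+L$. Your write-up is somewhat more detailed (making the branch of $\log g$, the indices $\ind_{j}g=0$, and the evaluation of the contour integral explicit), but the strategy and the key computation coincide with the paper's.
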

 \begin{proof}
 Since $\det G =1$ $(m=0)$, $G$ admits a canonical WH factorization (which is necessarily bounded) if and only if the RH problem (\ref{6.10.1}) admits only the trivial solution $\phi_{\pm}=0$. By Theorem \ref{theo6.1} there are non-zero solutions to that problem if and only if $\beta_{g}=iK' \mod L$, where $g=\exp (t\tau(\xi^2+1)^{-1})$ is the $\Sigma$-symbol of $G$. Since
 $$\beta_{g}=\frac{k_{0}}{2\pi}\int_{\Gamma}\frac{t}{\xi^2+1}d\xi=k_{0}t \in \mathbb{R},$$
 we conclude that we must have $\phi_{\pm}=0.$
 \end{proof}
The factorization of $G$ can be obtained in each case (depending on the value of $t$) from Theorem \ref{theo5.1}, the properties of $\mathcal{I}$ and the preceding results in this section. In particular we conclude that $G$ admits a factorization $G=G_{-}G_{+}$ with $G_{\pm}$ in $C(Q)$ if and only if $k_{0}t=0 \mod L$ and, assuming that $g=g_{-}g_{+}$ is a special $\Sigma$-factorization in that case, the factors are \linebreak $G_{\pm}=\mathcal{I}^{-1}(g_{\pm})$.

\vspace{0.1cm}

\emph{\underline{Example 2.}}
 Let $G \in C(Q)$ and let $g$ be its $\Sigma$-symbol. We consider here two cases related, on the one hand, to Theorem \ref{theo6.2} and  Corollary \ref{cor6.3} and, on the other hand, to Theorem \ref{theo6.5}.

 In the first case, suppose that the assumptions of Theorem \ref{theo6.2} hold. Then it follows from (\ref{6.13}) that $G=\mathcal{I}^{-1}(g_{-})\mathcal{I}^{-1}(r_{0})\mathcal{I}^{-1}(g_{+})$ (with \linebreak $r_{0}=\tau[(\xi+i)(\xi-ik_{0})]^{-1}$). Then, from (\ref{6.21}), (\ref{6.22}) and (\ref{6.26}) we have $G=G_{-}DG_{+}$ with $D= \diag (\lambda_{+}/\lambda_{-}, \lambda_{-}/\lambda_{+})$,
 $$G_{-}=\left[
           \begin{array}{cc}
             (\xi-i)^2(g_{-})_{\mathcal{O}} & (g_{-})_{\mathcal{E}} \\
             \frac{\xi-i}{\xi-ik_{0}}(g_{-})_{\mathcal{E}} & (g_{-})_{\mathcal{O}}\mathfrak{p}_{1}\\
           \end{array}
         \right], G_{+}=\left[
           \begin{array}{cc}
             \frac{\xi+ik_{0}}{\xi+i}(g_{+})_{\mathcal{E}} &  \frac{\xi+ik_{0}}{\xi+i}(g_{+})_{\mathcal{O}} \mathfrak{p}_{2}\\
            (g_{+})_{\mathcal{O}}\mathfrak{p}_{1}& (g_{+})_{\mathcal{E}} \\
           \end{array}
         \right]
 $$
 As a result, the factorization of $G$ allows to determine two linearly independent solutions to (\ref{6.12}) with $\psi_{\pm}$ in $C_{\mu}^{\pm}(\Gamma)$ (see Remark \ref{rmk6.4}). Denoting by $G_{1+}$ and $G_{1-}$ the first column of $G_{+}^{-1}$ and $G_{-}$, respectively, those solutions are $(T_{\Sigma}(SG_{1+}), T_{\Sigma}(\lambda_{+}\lambda_{-}^{-1}SG_{1-}) )$ and $(T_{\Sigma}(\lambda_{-}\lambda_{+}^{-1}SG_{1+}), T_{\Sigma}(SG_{1-}) )$ where $S$ was defined in (\ref{6.33}).

 In the second case, suppose that the assumptions of Theorem \ref{theo6.5} hold and, for simplicity, $\ind g_{1}=0$, $\ind g_{2}=1$ and $\tilde{g}=g\alpha_{+}$ admits a special $\Sigma$-factorization $\tilde{g}=\tilde{g}_{-}\tilde{g}_{+}$. Then a WH factorization for $G$, with partial indices 0, 1 as in Theorem \ref{theo6.5}, is $G=G_{-}DG_{+}$ with
 $D= \diag (\lambda_{-}/\lambda_{+}, 1)$,
 $$G_{-}=\left[
           \begin{array}{cc}
             (\tilde{g}_{-})_{\mathcal{E}} & \mathfrak{p}_{2}(\tilde{g}_{-})_{\mathcal{O}} \\
             \mathfrak{p}_{1}(\tilde{g}_{-})_{\mathcal{O}} & (\tilde{g}_{-})_{\mathcal{E}} \\
           \end{array}
         \right], G_{+}=\frac{2}{k_{0}-1}\tilde{J}\tilde{A}_{+}\tilde{J}\left[
           \begin{array}{cc}
             (\tilde{g}_{+})_{\mathcal{E}} &  \mathfrak{p}_{2}(\tilde{g}_{+})_{\mathcal{O}} \\
             \mathfrak{p}_{1}(\tilde{g}_{+})_{\mathcal{O}} & (\tilde{g}_{+})_{\mathcal{E}} \\
           \end{array}
         \right]
 $$
 where we took into account that $\mathcal{I}^{-1}(\alpha_{+}^{-1})=A_{+}^{-1}=2(k_0-1)^{-1}\diag (\lambda_{-}/\lambda_{+}, 1)\linebreak\tilde{J}\tilde{A}_{+}\tilde{J}$, with  $\tilde{J}=\diag (-1,1)$.

\vspace{0.4cm}

\textbf{Acknowledgments}

\vspace{0.2cm}

The work on this paper was partially supported by Funda\c{c}\~{a}o para a Ci\^{e}ncia e a Tecnologia
(FCT) through the project PTDC/MAT/81385/2006 and the Program POCI 2010/FEDER.

\end{document}